\newtheorem{theo}{Theorem}[section]
\newtheorem{lem}{Lemma}[section]
\newtheorem{cor}{Corollary}[section]
\newtheorem{exm}{Example}[section]
\newtheorem{rem}{Remark}[section]
\newtheorem*{theoA}{Theorem A}
\newtheorem*{theoB}{Theorem B}
\newtheorem*{theoC}{Theorem C}
\newtheorem*{remA}{Remark A}
\newcommand{\ol}{\overline}
\newcommand{\be}{\begin{equation}}
	\newcommand{\ee}{\end{equation}}
\newcommand{\beas}{\begin{eqnarray*}}
	\newcommand{\eeas}{\end{eqnarray*}}
\newcommand{\bea}{\begin{eqnarray}}
	\newcommand{\eea}{\end{eqnarray}}
\numberwithin{equation}{section}
\begin{document}

\title[T\MakeLowercase{he} Y\MakeLowercase{ang}-H\MakeLowercase{ua theorems in Several}....]{\LARGE T\Large\MakeLowercase{he} Y\MakeLowercase{ang}-H\MakeLowercase{ua} \MakeLowercase{theorems in Several Complex Variables}}

\date{}
\author[A. B\MakeLowercase {anerjee}, S. M\MakeLowercase{ajumder}, D. P\MakeLowercase{ramanik} \MakeLowercase{and} N. S\MakeLowercase{arkar}]{A\MakeLowercase {bhijit} B\MakeLowercase {anerjee}, S\MakeLowercase{ujoy} M\MakeLowercase{ajumder}, D\MakeLowercase{ebabrata} P\MakeLowercase{ramanik} \MakeLowercase{and} N\MakeLowercase{abadwip} S\MakeLowercase{arkar}$^{*}$}
\address{ Department of Mathematics, University of Kalyani, West Bengal 741235, India.}
\email{abanerjee\_kal@yahoo.co.in}
\address{Department of Mathematics, Raiganj University, Raiganj, West Bengal-733134, India.}
\email{sm05math@gmail.com, sjm@raiganjuniversity.ac.in}
\address{Department of Mathematics, Raiganj University, Raiganj, West Bengal-733134, India.}
\email{debumath07@gmail.com}
\address{Department of Mathematics, Raiganj University, Raiganj, West Bengal-733134, India.}
\email{naba.iitbmath@gmail.com}

\renewcommand{\thefootnote}{}
\footnote{2020 \emph{Mathematics Subject Classification}: 32A20, 32A22, 32H30.}
\footnote{\emph{Key words and phrases}: meromorphic function; uniqueness; shared values; several variables.}
\footnote{*\emph{Corresponding Author}: Nabadwip Sarkar.}

\renewcommand{\thefootnote}{\arabic{footnote}}
\setcounter{footnote}{0}

\begin{abstract} In this paper, we investigate meromorphic solutions in $\mathbb{C}^m$ of the nonlinear differential equation \[\displaystyle f^n\partial_u(f)g^n\partial_u(g)=1,\]
	where $\partial_u(f)=\sum_{j=1}^mu_j\partial_j(f)$ and $\sum_{j=1}^m u_j\neq 0$. Our results extend those of Yang and Hua \cite{YH1} to the framework of several complex variables. Moreover, we establish new uniqueness theorems that further generalize their conclusions to higher dimensions. As an application, explicit solutions of certain nonlinear partial differential equations in several variables are derived, and their physical interpretations are summarized in tabular form.  
\end{abstract}

\thanks{Typeset by \AmS -\LaTeX}
\maketitle

\section{\textbf{Introduction and Historical Background}}

	Solving nonlinear differential equations remains one of the most challenging and stimulating problems in mathematical analysis. Among the earliest contributions to this field, Yang and Hua \cite{YH1} were the first to investigate the nonlinear differential equation
	\begin{equation}\label{1.1}
		f^n f^{(1)} g^n g^{(1)} = 1,
	\end{equation}
	and to determine possible functional forms for the solutions $f$ and $g$. Their pioneering work laid a foundation that has inspired extensive developments in the study of meromorphic functions and nonlinear differential equations.

\subsection*{\textbf{Historical Results}}

	In 1997, Yang and Hua~\cite{YH1} presented a complete characterization of the non-constant entire and meromorphic solutions of equation~\eqref{1.1}. Their main results can be summarized as follows:

\begin{theoA}[\cite{YH1}, Theorem 3]
	Let $f$ and $g$ be two non-constant entire functions in $\mathbb{C}$, and let $n$ be a positive integer. 
	If $f^n f^{(1)} g^n g^{(1)} = 1$, then
	\[
	f(z) = c_1 \exp(-cz) \quad \text{and} \quad g(z) = c_2 \exp(cz),
	\]
	where $c$, $c_1$, and $c_2$ are non-zero constants satisfying 
	$c^2 (c_1 c_2)^{\,n+1} = -1$.
\end{theoA}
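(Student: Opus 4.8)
The plan is to first extract the strong zero‑distribution information hidden in the equation, then reduce the theorem to a statement about the order of growth, and finally dispose of the transcendental case by a value‑distribution argument. Since $f^nf'g^ng'\equiv 1$, none of $f,f',g,g'$ can vanish anywhere; being entire and zero‑free I write $f=e^{\alpha}$, $g=e^{\beta}$ with $\alpha,\beta$ entire, and observe that $f'/f=\alpha'$, $g'/g=\beta'$ are again zero‑free entire functions. Rewriting $f^nf'=\tfrac1{n+1}(f^{n+1})'$ and similarly for $g$, the equation turns into $\alpha'\beta'\,e^{(n+1)(\alpha+\beta)}\equiv 1$, equivalently $(fg)^{n+1}=\bigl((f'/f)(g'/g)\bigr)^{-1}$. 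Passing to characteristic functions and using that $f'/f,\,g'/g$ are entire with $m(r,f'/f)=S(r,f)$, $m(r,g'/g)=S(r,g)$, I get $T\bigl(r,(fg)^{n+1}\bigr)=S(r,f)+S(r,g)+O(1)$; hence $\alpha+\beta=\log(fg)$ grows extremely slowly compared with $f$ and $g$.

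The decisive reduction is that $f$ (equivalently $g$) has finite order. Granting this, $f=e^{\alpha}$ of finite order forces $\alpha$ to be a polynomial by Hadamard's factorization theorem, and then $f'=\alpha'e^{\alpha}$ being zero‑free forces the polynomial $\alpha'$ to be a non‑zero constant, so $\alpha$ is linear and $f(z)=c_1e^{az}$ with $c_1,a\neq 0$; likewise $g(z)=c_2e^{bz}$. Substituting, $f^nf'g^ng'=ab(c_1c_2)^{n+1}e^{(n+1)(a+b)z}\equiv 1$ forces $a+b=0$ and $ab(c_1c_2)^{n+1}=1$; writing $c=-a$ gives precisely $f(z)=c_1e^{-cz}$, $g(z)=c_2e^{cz}$ together with $-c^{2}(c_1c_2)^{n+1}=1$, i.e. $c^{2}(c_1c_2)^{n+1}=-1$, which is the asserted conclusion. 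So the whole content of the theorem is the finite‑order claim, and this is the step I expect to be the main obstacle.

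For the finite‑order claim I intend to exploit the antiderivative rigidity on both sides at once. Writing $f^nf'=e^{\Psi}$ with $\Psi$ entire, we have $f^{n+1}=(n+1)\int_0^z e^{\Psi}\,d\zeta+\text{const}$, and from $g^ng'=e^{-\Psi}$, $g^{n+1}=(n+1)\int_0^z e^{-\Psi}\,d\zeta+\text{const}$; since $f$ and $g$ are zero‑free, both primitives $\int e^{\pm\Psi}$ must omit a finite value, and an entire function omitting a value is a constant plus an exponential of an entire function. Differentiating $\int_0^z e^{\Psi}-a=e^{\mu}$ gives $\Psi=\mu+\log\mu'$ with $\mu'$ zero‑free, and symmetrically $-\Psi=\nu+\log\nu'$ with $\nu'$ zero‑free, so $\mu'\nu'e^{\mu+\nu}\equiv\text{const}$. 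If $\Psi$ is a priori a polynomial this closes at once: then $\mu$ is a polynomial of the same degree while $\log\mu'=\log(\text{polynomial})$ is a polynomial only when $\deg\mu=1$, forcing $\Psi$, hence $f$, to be of exponential‑linear type. The genuinely hard part is excluding transcendental $\Psi$, i.e. infinite order. Here I plan to combine the slow‑growth estimate $T(r,\alpha+\beta)=S(r,f)+S(r,g)+O(1)$ from the first paragraph with the Borel–Carathéodory inequality — to pass from control of the proximity functions $m(r,e^{\pm u})$ of the logarithmic derivatives (where $\alpha'=e^{u}$, $\beta'=e^{v}$) back to control of the maximum modulus of $\alpha+\beta$ — and thereby force $fg=e^{\alpha+\beta}$ to be constant. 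Once $\alpha+\beta$ is constant, $\alpha'=-\beta'$, so $\alpha'\beta'=-(\alpha')^{2}$ is a non‑zero constant by $\alpha'\beta'e^{(n+1)(\alpha+\beta)}\equiv 1$; hence $\alpha'$ is constant and $\alpha$ linear, which finishes the argument via the computation of the preceding paragraph.
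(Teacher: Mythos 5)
Your reduction is sound as far as it goes: the zero--freeness of $f,f',g,g'$, the representation $f=e^{\alpha}$, $g=e^{\beta}$ with $\alpha'\beta'e^{(n+1)(\alpha+\beta)}\equiv 1$, and the endgame (once $fg$ is constant, or once $f$ has finite order, $\alpha'$ is a zero--free polynomial, hence a non-zero constant, and the constants match as claimed) are all correct; this endgame is exactly Case~2 of the paper's proof of Theorem~\ref{t2.2}. But the whole weight of the theorem rests on the step you defer --- showing that $h:=1/(fg)$ is constant --- and neither mechanism you propose for it can close the gap. The antiderivative/omitted-value manipulation is circular: from $\int e^{\Psi}-a=e^{\mu}$ and $\int e^{-\Psi}-b=e^{\nu}$ you arrive at $\mu'\nu'e^{\mu+\nu}\equiv\mathrm{const}$, an equation of exactly the same shape as the one you started from, with no reduction in complexity. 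And the growth estimate $T(r,fg)=S(r,f)+S(r,g)$ cannot force constancy: $S(r,f)$ means only $o(T(r,f))$ outside an exceptional set, which is unbounded precisely in the infinite-order case you are trying to exclude; Borel--Carath\'eodory converts control of $T(r,e^{\alpha+\beta})$ into control of $M(r,\alpha+\beta)$, but ``small compared with $f$'' is not ``bounded'', so no contradiction follows.

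What is actually needed --- and what the paper does in Case~1 of the proof of Theorem~\ref{t2.2} --- is a Clunie-type logarithmic-derivative argument applied to a Riccati identity. In one variable, with $h$ entire and zero--free one has $(\beta')^{2}+\beta'\tfrac{h'}{h}+h^{n+1}=0$, hence $\xi:=\beta'+\tfrac12\tfrac{h'}{h}$ satisfies
\[
\xi^{2}=\tfrac14\Bigl(\tfrac{h'}{h}\Bigr)^{2}-h^{n+1}.
\]
If $\xi\equiv 0$ this gives $(n+1)m(r,h)=2m\bigl(r,h'/h\bigr)+O(1)=S(r,h)$, impossible for non-constant entire $h$; if $\xi\not\equiv 0$, differentiating and eliminating yields $h^{n+1}H_{1}=\text{(a quantity with $m(r,\cdot)=S$)}$ where $H_{1}=(n+1)h'/h-2\xi'/\xi$, and estimating $N(r,H_{1})\le \ol N(r,0;\xi)\le T(r,\xi)\le\tfrac{n+1}{2}T(r,h)+S(r,h)$ forces $\tfrac{n+1}{2}T(r,h)\le S(r,h)$, again a contradiction for every $n\ge 1$ in the entire case. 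Only after this does your Case-2 computation apply. (Note also that the paper does not prove Theorem~A itself --- it is quoted from Yang--Hua --- so the relevant comparison is with the proof of Theorem~\ref{t2.2}, whose $\xi$-argument is precisely the ingredient missing from your proposal.)
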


\begin{theoB}[\cite{YH1}, Theorem 2]
	Let $f$ and $g$ be two non-constant meromorphic functions in $\mathbb{C}$, and let $n \ge 6$ be an integer. 
	If $f^n f^{(1)} g^n g^{(1)} = 1$, then
	\[
	f(z) = c_1 \exp(-cz) \quad \text{and} \quad g(z) = c_2 \exp(cz),
	\]
	where $c$, $c_1$, and $c_2$ are non-zero constants satisfying 
	$c^2 (c_1 c_2)^{\,n+1} = -1$.
\end{theoB}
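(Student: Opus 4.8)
The plan is to prove that, for $n\ge 6$, every meromorphic solution pair $(f,g)$ of $f^nf'g^ng'=1$ actually consists of \emph{entire} functions, and then to conclude by Theorem~A. The easy first step is that $f$ and $g$ are zero-free: if $z_0$ were a zero of $f$ of multiplicity $m$, then $f^nf'$ would vanish there to order $(n+1)m-1$, so $g^ng'$ would have a pole of that order at $z_0$; since $g(z_0)\ne 0$, this forces a pole of $g$ of some order $k$, at which $g^ng'$ has a pole of order $(n+1)k+1$, whence $(n+1)(m-k)=2$ --- impossible since $n+1\ge 7$. Consequently $\phi:=f^nf'=(f^{n+1})'/(n+1)$ has all its zeros over the poles of $g$ and all its poles over the poles of $f$, and tracking multiplicities shows that every zero and every pole of $\phi$ has multiplicity at least $n+2$.

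\emph{Negligibility of the poles.} Since $\phi=1/(g^ng')$ one has $\overline N(r,1/\phi)=\overline N(r,g)$, $\overline N(r,\phi)=\overline N(r,f)$, and $(n-2)T(r,f)\le (n+2)T(r,g)+S(r,f)$ together with its symmetric analogue follow from the equation, so $T(r,f)\asymp T(r,g)$ and the error terms $S(r,f),S(r,g)$ may be written as a common $S(r)$. Applying the Second Main Theorem to $\phi$ with targets $0,1,\infty$ in its ramified form, the multiplicity gains ($\ge n+2$ over $0$ and over $\infty$) contribute at least $(n+1)\big(\overline N(r,\phi)+\overline N(r,1/\phi)\big)$ to the ramification term, giving
\[
T(r,\phi)\le \overline N\!\Big(r,\tfrac{1}{\phi-1}\Big)-n\,\overline N(r,\phi)-n\,\overline N(r,1/\phi)+S(r);
\]
since $\overline N(r,1/(\phi-1))\le T(r,\phi)+O(1)$, this forces $\overline N(r,f)=\overline N(r,g)=S(r)$. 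A logarithmic-derivative estimate then upgrades this: for the zero-free $g$, the isolated zeros of $g'$ are exactly the poles of $g/g'$, so $N(r,1/g')=N(r,g/g')\le T(r,g'/g)+O(1)=m(r,g'/g)+\overline N(r,g)+O(1)=S(r)$, and combined with $N(r,1/g')=(n+1)N(r,f)+\overline N(r,f)$ this yields $N(r,f)=S(r)$, and symmetrically $N(r,g)=S(r)$.

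\emph{Elimination of the poles and conclusion.} Being zero-free with $N(r,f)=S(r)$, write $f=e^{-a_0}/\Pi_1$ with $a_0$ entire and $\Pi_1$ the canonical product over the (sparse) poles of $f$, so that $\Pi_1$ is a small function; likewise $g=e^{-b_0}/\Pi_2$. Inserting these into the equation, computing $f^nf'$ from $(f^{n+1})'$, and using $T(r,a_0')=m\big(r,(e^{a_0})'/e^{a_0}\big)=S(r)$ together with the smallness of $\Pi_1,\Pi_2,\Pi_1',\Pi_2'$, one finds that every factor except $e^{(n+1)(a_0+b_0)}$ is small; hence $e^{(n+1)(a_0+b_0)}$ is a small function, whereas $e^{(n+1)a_0}$ has characteristic comparable to $(n+1)T(r,f)$. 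Feeding this back into the reduced identity $a_0'b_0'=e^{(n+1)(a_0+b_0)}$ and comparing growth forces $a_0+b_0$ to be constant and then $a_0',b_0'$ to be constant, i.e.\ $\Pi_1,\Pi_2$ are constant, so $f$ and $g$ have no poles at all. Thus $f,g$ are non-constant entire solutions, and Theorem~A gives $f(z)=c_1e^{-cz}$, $g(z)=c_2e^{cz}$ with $c^2(c_1c_2)^{n+1}=-1$.

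\emph{Main obstacle.} The genuinely delicate point is the passage from ``the poles are negligible'' to ``there are no poles'': one must justify that the canonical products $\Pi_1,\Pi_2$ are indeed small functions and that the growth comparison between $e^{(n+1)(a_0+b_0)}$ and $e^{(n+1)a_0}$ is legitimate (an order-reduction argument is the cleanest route), and it is exactly in balancing the multiplicity gains $n+2$ against the error terms in the Second Main Theorem and in these estimates that the hypothesis $n\ge 6$ is consumed. Everything after $f$ and $g$ are known to be entire is immediate from Theorem~A.
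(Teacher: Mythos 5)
Your opening step (zero-freeness of $f$ and $g$ via the local multiplicity count $(n+1)(m-k)=2$, impossible for $n\ge 6$) matches the paper's Steps 1--3 and is correct. The argument breaks down, however, at the displayed inequality
\[
T(r,\phi)\le \overline N\!\Big(r,\tfrac{1}{\phi-1}\Big)-n\,\overline N(r,\phi)-n\,\overline N(r,1/\phi)+S(r).
\]
This is not a consequence of the Second Main Theorem. The ramification term can absorb the excess multiplicity $N(r,a;\phi)-\overline N(r,a;\phi)$ at each value only once; doing so for $a=0,1,\infty$ yields the standard truncated form $T(r,\phi)\le \overline N(r,\phi)+\overline N(r,1/\phi)+\overline N(r,1/(\phi-1))+S(r)$, and the multiplicity bounds $\ge n+2$ over $0$ and $\infty$ then give only $\tfrac{n}{n+2}T(r,\phi)\le \overline N(r,1/(\phi-1))+S(r)$, i.e.\ a \emph{lower} bound on the $1$-points of $\phi$. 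Since the value $1$ plays no distinguished role for $\phi=f^nf'$ (nothing in $f^nf'g^ng'=1$ constrains where $f^nf'=1$), $\overline N(r,1/(\phi-1))$ may well be comparable to $T(r,\phi)$, and no smallness of $\overline N(r,f)$, $\overline N(r,g)$ follows. Everything downstream --- the upgrade to $N(r,f)=S(r)$ and the canonical-product step --- therefore has no foundation. The final step is additionally circular: after substituting $f=e^{-a_0}/\Pi_1$, $g=e^{-b_0}/\Pi_2$ the equation reads $e^{(n+1)(a_0+b_0)}=(a_0'\Pi_1+\Pi_1')(b_0'\Pi_2+\Pi_2')\,\Pi_1^{-(n+2)}\Pi_2^{-(n+2)}$, not $a_0'b_0'=e^{(n+1)(a_0+b_0)}$; the latter already presupposes that $\Pi_1,\Pi_2$ are constant, which is exactly what you are trying to prove.

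For comparison, the paper's proof (of Theorem 2.2, which contains Theorem B as the case $m=1$) never invokes the value $1$: it sets $h=1/(fg)$, entire once $f,g$ are zero-free, proves $N(r,f)+N(r,g)\le 2m(r,h)+O(1)$, and then analyses the Riccati-type relation $\xi^2=-h^{n+1}(h'/h)^2/4$ with $\xi=g'/g+h'/(2h)$. When $h$ is non-constant this culminates in $\tfrac{1}{2}(n-5)T(r,h)\le o(T(r,h))$, a contradiction for $n\ge 6$; when $h$ is constant the exponential form of $f$ and $g$ drops out at once. If you wish to repair your argument, the auxiliary function $h=1/(fg)$, rather than the Second Main Theorem applied to $f^nf'$ with the unrelated target $1$, is the missing idea.
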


\subsection*{\textbf{Uniqueness of Meromorphic Functions}}

	These results have significantly influenced the study of uniqueness problems for meromorphic functions, particularly in the context of nonlinear differential polynomials sharing a common value. In the same work, Yang and Hua also explored the uniqueness of meromorphic functions when two linear differential polynomials share a common $a$-point, for $a \in \mathbb{C}\backslash \{0\}$. The following theorem captures their key result:

\begin{theoC}[\cite{YH1}, Theorem 1]
	Let $f$ and $g$ be two non-constant meromorphic functions in $\mathbb{C}$, $n \ge 11$ be an integer, and $a \in \mathbb{C}\backslash  \{0\}$. 
	If $f^n f^{(1)}$ and $g^n g^{(1)}$ share the value $a$ counting multiplicities (CM), then either
	\[
	f = d g \quad \text{for some $(n+1)$-th root of unity } d,
	\]
	or
	\[
	g(z) = c_1 \exp(cz), \quad f(z) = c_2 \exp(-cz),
	\]
	where $c$, $c_1$, and $c_2$ are constants satisfying 
	\[
	(c_1 c_2)^{\,n+1} c^2 = -a^2.
	\]
\end{theoC}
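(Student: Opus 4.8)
The plan is to run the classical Nevanlinna ``shared-value-CM'' argument for the two differential polynomials $F:=f^{n}f^{(1)}=\tfrac1{n+1}\bigl(f^{\,n+1}\bigr)'$ and $G:=g^{n}g^{(1)}=\tfrac1{n+1}\bigl(g^{\,n+1}\bigr)'$, and then to finish with Theorem~B. First I would record the routine estimates $(n+1)T(r,f)=T(r,f^{\,n+1})\le T(r,F)+S(r,f)$ and $T(r,F)\le(n+2)T(r,f)+S(r,f)$, and likewise for $g$, so that $S(r,f)$, $S(r,g)$, $S(r,F)$ and $S(r,G)$ all agree (call the common term $S(r)$) and the four characteristics are mutually comparable. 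The structural fact I would single out is that a zero of $f$ of order $k$ is a zero of $F$ of order $(n+1)k-1\ge n$, and a pole of $f$ of order $p$ is a pole of $F$ of order $(n+1)p+1\ge n+2$ (and the same for $G$); this abundance of large multiplicities is exactly what the hypothesis $n\ge 11$ will exploit.

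Next I would introduce the auxiliary function
\[
\Phi:=\left(\frac{F''}{F'}-\frac{2F'}{F-a}\right)-\left(\frac{G''}{G'}-\frac{2G'}{G-a}\right).
\]
A Laurent expansion at a common $a$-point $z_{0}$ shows that each bracket equals $-\dfrac{2}{z-z_{0}}+O(z-z_{0})$ when the $a$-point is simple, and $-\dfrac{\mu+1}{z-z_{0}}+(\text{holomorphic})$ when it has common multiplicity $\mu\ge2$; hence $\Phi$ vanishes at every simple common $a$-point and is holomorphic at every multiple one. Consequently the poles of $\Phi$ are simple and lie only among the poles of $F$ and of $G$, the zeros of $f$ and of $g$, and the ``extra'' zeros of $F'$ and of $G'$ (those off $F(F-a)$ and $G(G-a)$), while the lemma on the logarithmic derivative gives $m(r,\Phi)=S(r)$.

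If $\Phi\equiv0$, integrating twice gives $\dfrac1{F-a}=\dfrac{A}{G-a}+B$ with constants $A\ne0$, $B$. When $B=0$ this is $F=A^{-1}G+a(1-A^{-1})$, so $f^{\,n+1}=A^{-1}g^{\,n+1}+(n+1)a(1-A^{-1})z+\text{const}$; if $A\ne1$ the right side carries a genuine linear term, and since every zero and pole of $f^{\,n+1}$ has multiplicity divisible by $n+1$, the second main theorem for $f^{\,n+1}$ with the three targets $0$, $\infty$, that degree-one polynomial gives $\bigl(1-\tfrac{3}{n+1}\bigr)T(r,f)\le S(r)$, impossible. So $A=1$, $F\equiv G$, whence $f^{\,n+1}-g^{\,n+1}$ is constant; the same count kills a nonzero constant, so $f^{\,n+1}=g^{\,n+1}$ and $f=dg$ with $d^{\,n+1}=1$. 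When $B\ne0$ one rewrites $F=a+\dfrac{G-a}{B(G-a)+A}$; unless $(A,B)=(-1,-\tfrac1a)$, which is precisely $FG\equiv a^{2}$, or $A=aB$, which forces $F$ constant (excluded), the value $\tfrac{aB-A}{B}$ is Picard-exceptional for $G$ and poles of $G$ are carried to a single value of $F$, and feeding this into the second main theorem for $G$, again using that the $g$-inherited zeros and poles of $G$ have multiplicity $\ge n$, yields a contradiction for $n\ge11$. Thus $\Phi\equiv0$ leaves only $f=dg$ or $f^{n}f^{(1)}g^{n}g^{(1)}\equiv a^{2}$. If instead $\Phi\not\equiv0$, then the simple common $a$-points are zeros of $\Phi$, so $\overline N_{1)}(r,a;F)\le N(r,0;\Phi)\le T(r,\Phi)+O(1)=N(r,\Phi)+S(r)$; substituting the pole bound for $\Phi$ and combining with the ramification-improved second main theorems for $F$ and $G$ (which supply the remaining $a$-points and absorb the extra zeros of $F',G'$), then translating back to $f$ and $g$, produces an inequality $c(n)\bigl(T(r,f)+T(r,g)\bigr)\le S(r)$ with $c(n)>0$ once $n\ge11$ — a contradiction. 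So $\Phi\not\equiv0$ cannot occur.

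It remains to dispose of $f^{n}f^{(1)}g^{n}g^{(1)}\equiv a^{2}$. Choosing $\lambda,\mu$ with $(\lambda\mu)^{n+1}=a^{-2}$ and setting $f_{1}=\lambda f$, $g_{1}=\mu g$ turns this into $f_{1}^{\,n}f_{1}^{(1)}g_{1}^{\,n}g_{1}^{(1)}\equiv1$; since $n\ge11\ge6$, Theorem~B gives $f_{1}(z)=\tilde c_{1}\exp(-cz)$, $g_{1}(z)=\tilde c_{2}\exp(cz)$ with $c^{2}(\tilde c_{1}\tilde c_{2})^{n+1}=-1$. Undoing the scaling and relabelling yields $g(z)=c_{1}\exp(cz)$, $f(z)=c_{2}\exp(-cz)$ with $(c_{1}c_{2})^{n+1}c^{2}=c^{2}(\tilde c_{1}\tilde c_{2})^{n+1}/(\lambda\mu)^{n+1}=-a^{2}$, as claimed. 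I expect the real obstacle to be the case $\Phi\not\equiv0$ together with the stray Möbius subcases: both demand a careful, somewhat lengthy audit of the zeros and poles of $F$ and $G$, and it is exactly in balancing those counts — where the $n$ in the multiplicity bounds competes against the fixed constants $2$ and $3$ from the second main theorem — that the threshold $n\ge11$ is forced.
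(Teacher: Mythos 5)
Your proposal is sound in outline and is essentially the classical one-variable argument (indeed it is close to Yang--Hua's original proof), but it takes a genuinely different route from this paper. The paper never proves Theorem C directly; its corresponding result is Theorem \ref{t3.1}, and there the trichotomy ``$N_2$-inequality, or $F\equiv G$, or $FG\equiv a^2$'' is obtained not from the second-derivative auxiliary function $\Phi$ but from Lemma \ref{l6}, which is proved by applying the Borel/Cartan-type second main theorem (Lemma \ref{l5}) to $F_1=f$, $F_2=(f-1)/(g-1)$, $F_3=-gF_2$ with $F_1+F_2+F_3=1$, together with a long pointwise estimate of the Wronskian divisor (inequality (\ref{rbb8})). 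That Wronskian route is what survives the passage to $\mathbb{C}^m$, where your Laurent-expansion analysis of $\Phi$ at isolated common $a$-points has no direct analogue (the $a$-points form divisors and second derivatives must be replaced by iterated directional derivatives); your $\Phi$-method is shorter in one variable but does not generalize, which is precisely why the paper avoids it. The endgame is the same in both treatments: the $FG\equiv a^2$ branch is handed to Theorem B (resp.\ Theorem \ref{t2.2}), and the $F\equiv G$ branch is killed by the second main theorem using the divisibility of the multiplicities of $f^{n+1}$, though the paper phrases this via $h=f/g$ and the $(n+1)$-th roots of unity rather than via the constant of integration. Two loose ends in your write-up worth tightening: the estimate $(n+1)T(r,f)\le T(r,F)+S(r,f)$ omits the correction term $N(r,0;f)-N(r,0;f^{(1)})$ (compare Lemma \ref{l7}(i)), which matters when the constants are balanced to force $n\ge 11$; and in the Möbius subcase $B\ne 0$ the condition $A=aB$ does not force $F$ to be constant (it gives $F=a+1/B-a/(BG)$), so that branch still needs its own second-main-theorem count. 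Neither defect breaks the strategy, but the $\Phi\not\equiv 0$ accounting you defer is exactly where the threshold $n\ge 11$ is earned.
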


\begin{remA}
	If $f$ and $g$ are entire, it suffices to assume $n \ge 7$ in Theorem C.
\end{remA}

\subsection*{\textbf{Modern Context and Multi-variable Generalizations}}

	\begin{itemize}
		\item Nevanlinna theory in several complex variables provides a framework for studying value distribution and uniqueness phenomena in higher dimensions.
		\item Applications include: complex geometry, normal families, PDEs, partial difference equations, and Fermat-type functional equations (see \cite{TBC1}--\cite{HZ1}, \cite{ps1}--\cite{LZ}, \cite{MD1}, \cite{MDP}, \cite{MSP}, \cite{GS2}, \cite{XC1}--\cite{XW1}).
	\end{itemize}

\medskip

Let 
\[
S_{m} = \{ (u_1, u_2, \ldots, u_m) \in \mathbb{C}^{m} : u_1 + u_2 + \cdots + u_m \neq 0 \}.
\]
For $u \in S_m$, we define
\[
\partial_u(f) = \sum_{j=1}^m u_j \frac{\partial f}{\partial z_j} \not\equiv 0.
\]
The $k$-th order derivative $\partial_u^k(f)$ is defined inductively by 
\[
\partial_u^k(f) = \partial_u(\partial_u^{\,k-1}(f)).
\]

\subsection*{\textbf{Problem Statement}}

	In this paper, we focus on the following nonlinear partial differential equation:
	\begin{equation}\label{2.1}
		f^n \partial_u(f) \, g^n \partial_u(g) = 1,
	\end{equation}
	where $f$ and $g$ are non-constant meromorphic functions in $\mathbb{C}^m$, and $n$ is a positive integer.
	
	Furthermore, we investigate the uniqueness problem for meromorphic functions in $\mathbb{C}^m$ with respect to nonlinear partial differential polynomials generated by them sharing a common value.

\section*{\textbf{Brief Discussion of the Theory of Several Complex Variables}}

	We begin by recalling some basic notions and definitions from the theory of several complex variables (see, for example, \cite{HLY1, WS2}). The purpose of this section is to establish the foundational notations and concepts that will be used throughout the paper, particularly those relevant to Nevanlinna theory in higher dimensions. For clarity and completeness, we also include a concise conceptual breakdown to help illuminate the central ideas and their interrelations within this framework.

	\section*{Basic Sets and Notations}
	
	\begin{itemize}[leftmargin=1.3cm]
		\item $\displaystyle \mathbb{Z}_+ = \{n \in \mathbb{Z} : n \ge 0\}$ --- the set of nonnegative integers.
		\item $\displaystyle \mathbb{Z}^+ = \{n \in \mathbb{Z} : n > 0\}$ --- the set of positive integers.
		\item For any set $S$, $\#(S)$ denotes the \emph{cardinality} (number of elements) of $S$.
	\end{itemize}
	
	\section*{Differential Forms on $\mathbb{C}^m$}
	
	\begin{itemize}[leftmargin=1.3cm]
		\item The exterior derivative decomposes as
		$d = \partial + \bar{\partial}$.
		\item The twisted differential is defined by
		\[
		d^c = \frac{i}{4\pi}(\bar{\partial} - \partial),
		\]
		and hence
		\[
		dd^c = \frac{i}{2\pi} \partial \bar{\partial}.
		\]
	\end{itemize}
	
	\section*{Exhaustion and K\"ahler Structure}
	
	\begin{itemize}[leftmargin=1.3cm]
		\item A smooth, nonnegative function $\tau : \mathbb{C}^m \to [0, b)$ ($0 < b \le \infty$) is an \emph{exhaustion function} if $\tau^{-1}(K)$ is compact for every compact $K$.
Standard exhaustion:
		$\tau_m(z) = \|z\|^2$.
		\item The standard K\"ahler metric on $\mathbb{C}^m$ is given by
		$\upsilon_m = dd^c \tau_m > 0$.
		
		\item On $\mathbb{C}^m\backslash \{0\}$, define
		$\omega_m = dd^c \log \tau_m$ and
		$\sigma_m = d^c \log \tau_m \wedge \omega_m^{m-1}$.
		\item For $S \subseteq \mathbb{C}^m$ and $r>0$, denote:
		$S[r]$, $S(r)$ and $S\langle r\rangle$
		as the intersections of $S$ with the closed ball, open ball, and sphere of radius $r$, respectively, centered at $0 \in \mathbb{C}^m$.
	\end{itemize}
	
	\section*{Analytic Sets}
	
	\begin{itemize}[leftmargin=1.3cm]
		\item A closed subset $A \subset G$ is called \emph{analytic} if for every $a \in A$ there exist holomorphic functions $f_1, \dots, f_l$ defined in a neighbourhood $N(a)$ such that
		\[
		A \cap N(a) = \{z \in N(a) : f_1(z) = \cdots = f_l(z) = 0\}.
		\]
		\item A point $a \in A$ is \emph{regular} if $df_1(a), \dots, df_l(a)$ are linearly independent.
		Otherwise, $a$ is a \emph{singular point}.
		Denote:
		$S(A) = \text{set of singular points}$ and $R(A) = A\backslash  S(A)$.
		%\item If $S(A) = \varnothing$, then $A$ is said to be \emph{regular}.
	\end{itemize}
	
	\section*{Multiplicities of Zeros}
	
	\begin{itemize}[leftmargin=1.3cm]
		\item For a holomorphic function $f$ on an open set $G \subset \mathbb{C}^m$ and multi-index $I = (i_1, \ldots, i_m)$, define
		\[
		\partial^I f(z) = 
		\frac{\partial^{|I|} f(z)}{\partial z_1^{i_1} \cdots \partial z_m^{i_m}},
		\qquad |I| = \sum_{j=1}^m i_j.
		\]
		\item The \emph{zero multiplicity} $\mu_f^0(a)$ at $a \in \mathbb{C}^m$ is the smallest integer $i$ such that the homogeneous term $P_i(z-a)$ in the Taylor expansion
		\[
		f(z) = \sum_{i=0}^\infty P_i(z-a)
		\]
		is nonzero:
		\[
		\mu_f^0(a) = \min\{i : P_i(z-a) \not\equiv 0\}.
		\]
	\end{itemize}
	
	\section*{Meromorphic Functions and Divisors}
	
	\begin{itemize}[leftmargin=1.3cm]
		\item If $f$ is meromorphic on $G$, there exist holomorphic $g, h$ such that $hf = g$ on $G$, and $g$, $h$ are coprime at each point.
		The \emph{$c$-multiplicity} of $f$ is defined by
		\[
		\mu_f^c =
		\begin{cases}
			\mu_{g - ch}^0, & c \in \mathbb{C},\\[6pt]
			\mu_h^0, & c = \infty.
		\end{cases}
		\]
		\item The \emph{divisor} of $f$ is
		$\nu = \mu_f = \mu_f^0 - \mu_f^\infty$,
		and its support is
		\[
		\operatorname{supp}(\nu) = \overline{\{z \in G : \nu(z) \ne 0\}}.
		\]
	\end{itemize}
	
	\section*{Counting and Valence Functions}
	
	\begin{itemize}[leftmargin=1.3cm]
		\item For $A = \operatorname{supp}(\nu)$ and $t>0$, the \emph{counting function} is
		\[
		n_\nu(t) = t^{-2(m-1)} \int_{A[t]} \nu \, \upsilon_m^{m-1}.
		\]
		\item The \emph{valence function} is
		\[
		N_\nu(r) = N_\nu(r, r_0) = \int_{r_0}^r n_\nu(t) \frac{dt}{t}, \qquad (r \ge r_0).
		\]
		\item For a meromorphic function $f$:
		\[
		n_{\mu_f^a}(t) =
		\begin{cases}
			n(t, a; f), & a \in \mathbb{C},\\
			n(t, f), & a = \infty,
		\end{cases}
		\quad
		N_{\mu_f^a}(r) =
		\begin{cases}
			N(r, a; f), & a \in \mathbb{C},\\
			N(r, f), & a = \infty.
		\end{cases}
		\]
		\item For $k \in \mathbb{N}$, define the \emph{truncated multiplicity functions}:
		\[
		\mu_{f,k}^a(z) = \min\{\mu_f^a(z), k\}.
		\]
		Then
		\[
		n_k(t, a; f), \quad N_k(r, a; f)
		\]
		denote the corresponding truncated counting functions, and for $k=1$:
		\[
		\overline{n}(t, a; f) = n_1(t, a; f), \qquad
		\overline{N}(r, a; f) = N_1(r, a; f).
		\]
	\end{itemize}
	\section*{Sharing of Values and Proximity Functions}
	
	\begin{itemize}[leftmargin=1.3cm]
		
		\item Let $f$, $g$, and $a$ be meromorphic functions on $\mathbb{C}^m$.  
		Then one can find three pairs of entire functions
		$(f_1, f_2)$, $(g_1, g_2)$ and $(a_1, a_2)$,
		where each pair is coprime at every point in $\mathbb{C}^m$, such that
		\[
		f = \frac{f_2}{f_1}, \qquad g = \frac{g_2}{g_1}, \qquad a = \frac{a_2}{a_1}.
		\]
		\item We say that $f$ and $g$ \emph{share the value $a$ by counting multiplicities (CM)} if
		\[
		\mu_{a_1 f_2 - a_2 f_1}^0 = \mu_{a_1 g_2 - a_2 g_1}^0 \quad (a \not\equiv \infty),
		\]
		and
		\[
		\mu_{f_1}^0 = \mu_{g_1}^0 \quad (a = \infty).
		\]
		
		\item A property is said to hold \emph{generically} if it holds except on an analytic subset of strictly smaller dimension.  
		For instance, if $f$ is a meromorphic function on $\mathbb{C}^m$, then there exist holomorphic functions $g$ and $h$ such that
		\[
		\dim\big(g^{-1}(\{0\}) \cap h^{-1}(\{0\})\big) \le m - 2,
		\qquad f = \frac{g}{h}.
		\]
		Consequently, the common zero factor of $g$ and $h$ can be cancelled \emph{generically} over its support sets (see \cite[pp.~103]{HLY1}).
		
		\item With the help of the positive logarithm function, we define the \emph{proximity function} of $f$ by
		\[
		m(r, f) = \int_{\mathbb{C}^m\langle r\rangle} \log^+ |f| \, \sigma_m \ge 0.
		\]
		
		\item The \emph{characteristic function} of $f$ is then defined as
		$T(r, f) = m(r, f) + N(r, f)$.
		
		\item We further define
		\[
		m(r, a; f) =
		\begin{cases}
			m(r, f), & a = \infty, \\[6pt]
			m\!\left(r, \dfrac{1}{f - a}\right), & a \in \mathbb{C}.
		\end{cases}
		\]
		
		\item The \emph{First Main Theorem of Nevanlinna Theory} in this setting states that, for $a \in \mathbb{C}$,
		\[
		m(r, a; f) + N(r, a; f) = T(r, f) + O(1),
		\]
		where $O(1)$ denotes a bounded function for sufficiently large $r$.
	\end{itemize}
\medskip
The remainder of the paper is organized as follows:\par  
In Section~2, we find possible solutions of the nonlinear partial differential equation~(\ref{2.1}) 
for the functions $f$ and $g$. In Section~3, we consider the uniqueness problem for meromorphic functions in $\mathbb{C}^m$ 
concerning nonlinear partial differential polynomials generated by them sharing one value.

\section{\bf{Solutions of $f^n\partial_u(f) g^n\partial_u(g)=1$}}
The differential equation  
\[
f^n\,\partial_u(f)\, g^n\,\partial_u(g) = 1
\]
can be regarded as a natural extension, to several complex variables, of classical nonlinear differential equations involving a function and its derivative in one variable, such as  
\[
f^n f' = 1, \qquad f^n f' g^n g' = 1, \qquad \text{or more generally } f^p (f')^q = c.
\]
Equations of this type have been extensively studied in connection with the growth, uniqueness, and invariance properties of meromorphic functions. For instance, the equation \(f^n f' = 1\) in one variable implies that \(f\) is of exponential type, namely
\[
f(z) = c_1 e^{cz},
\]
where \(c_1, c \neq 0\).  

In several variables, the directional derivative \(\partial_u(f)\) serves as the analogue of the ordinary derivative, leading to more intricate interactions among variables. The above equation links \(f\) and \(g\) through their powers and directional derivatives, imposing strong structural constraints on possible solutions.  

Our objective is to determine all non-constant entire or meromorphic pairs \((f, g)\) satisfying this identity. The following theorems describe the general form of these solutions.

\begin{theo}\label{t2.1} Let $f:\mathbb{C}^m\to \mathbb{C}$ and $g:\mathbb{C}^m\to \mathbb{C}$ be two non-constant entire functions and let $n\geq 1$ be an integer. If 
\[(f^n\partial_uf)(g^n\partial_ug)\equiv 1,\]
then $f=\exp(\alpha)$ and $g=\exp(\beta)$, where $\alpha$ and $\beta$ are non-constant entire functions in $\mathbb{C}^m$ such that $\partial_u(\alpha)=c$, $\partial_u(\beta)=-c$ and $c^2e^{(n+1)(\alpha+\beta)}=-1$. 
\end{theo}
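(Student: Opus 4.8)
The plan is to strip the equation down to purely exponential data, then restrict to complex lines in the direction $u$ so that Theorem A applies, and finally reassemble the one‑variable information into the claimed global form.

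First I would observe that the right‑hand side is the unit of the ring of entire functions on $\mathbb{C}^m$, so each of the entire factors $f^n\partial_u(f)$ and $g^n\partial_u(g)$ is zero‑free. Since $n\ge 1$, any zero of $f$ would force a zero of $f^n\partial_u(f)$; hence $f$, and likewise $g$, is zero‑free, and therefore so are $\partial_u(f)$ and $\partial_u(g)$. Because $\mathbb{C}^m$ is simply connected (Stein, with $H^1=0$), a zero‑free entire function is an exponential, so $f=\exp(\alpha)$ and $g=\exp(\beta)$ with $\alpha,\beta$ entire and, since $f,g$ are non‑constant, non‑constant. Using $\partial_u(e^\alpha)=\partial_u(\alpha)e^\alpha$ we get $f^n\partial_u(f)=\partial_u(\alpha)\,e^{(n+1)\alpha}$, so the equation becomes
\[
\partial_u(\alpha)\,\partial_u(\beta)\,e^{(n+1)(\alpha+\beta)}\equiv 1,
\]
with $\partial_u(\alpha)$ and $\partial_u(\beta)$ both zero‑free.

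Next I would reduce to one variable. Fix $z_0\in\mathbb{C}^m$ and set $\tilde f(\zeta)=f(z_0+\zeta u)$, $\tilde g(\zeta)=g(z_0+\zeta u)$ (here $u\ne 0$ since $u\in S_m$); by the chain rule $\tilde f'(\zeta)=(\partial_u f)(z_0+\zeta u)$, and restricting the equation to this line yields $\tilde f^{\,n}\tilde f'\,\tilde g^{\,n}\tilde g'\equiv 1$ in $\mathbb{C}$. Neither $\tilde f$ nor $\tilde g$ can be constant, as that would make the left side vanish, so Theorem A applies and gives $\tilde f(\zeta)=c_1e^{-c\zeta}$, $\tilde g(\zeta)=c_2e^{c\zeta}$ with constants depending on $z_0$. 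In particular $\partial_u(f)/f=\partial_u(\alpha)$ and $\partial_u(g)/g=\partial_u(\beta)$ are constant along every line in the direction $u$, that is, $\partial_u^2(\alpha)\equiv 0$ and $\partial_u^2(\beta)\equiv 0$.

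To finish, I would apply $\partial_u$ to the logarithm of the displayed identity, obtaining
\[
\frac{\partial_u^2(\alpha)}{\partial_u(\alpha)}+\frac{\partial_u^2(\beta)}{\partial_u(\beta)}+(n+1)\bigl(\partial_u(\alpha)+\partial_u(\beta)\bigr)\equiv 0,
\]
and since the first two terms vanish by the previous step we get $\partial_u(\beta)\equiv-\partial_u(\alpha)$. Writing $c:=\partial_u(\alpha)$, the identity becomes $-c^2e^{(n+1)(\alpha+\beta)}\equiv 1$, i.e. $c^2e^{(n+1)(\alpha+\beta)}\equiv-1$, and it remains only to see that $c$ is a genuine constant: it is a zero‑free entire function with $\partial_u(c)\equiv 0$ and $c^2=-e^{-(n+1)(\alpha+\beta)}$, and I would argue via its growth, using the several‑variable lemma on logarithmic derivatives together with the First Main Theorem, that $T(r,c)$ is absorbed by its own error term, forcing $c$ to be constant. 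This last step — upgrading the directional constancy $\partial_u^2(\alpha)\equiv 0$ to honest constancy of $\partial_u(\alpha)$ — is the one I expect to be the main obstacle, since a priori $c$ is constrained only along $u$‑lines while the global identity pins down essentially $\alpha+\beta$ rather than $\alpha$ itself; making that argument airtight (or isolating exactly what extra input it needs) is where the real work lies. Granting it, $\partial_u(\alpha)=c$, $\partial_u(\beta)=-c$ and $c^2e^{(n+1)(\alpha+\beta)}=-1$, as claimed.
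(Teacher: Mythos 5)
Your argument is correct and takes a genuinely different, and much more elementary, route than the paper. The paper obtains Theorem \ref{t2.1} only as a byproduct of the meromorphic Theorem \ref{t2.2}, whose proof is a long Nevanlinna-theoretic analysis: after showing $f\neq 0$, $g\neq 0$ by a local multiplicity computation in adapted coordinates, it introduces $h=1/fg$, the auxiliary function $\xi=\partial_u(g)/g+\tfrac12\,\partial_u(h)/h$ and the quantity $H_1$, and eliminates the unwanted cases with the logarithmic derivative lemma and the First Main Theorem. Your proof replaces all of this, in the entire case, by three cheap observations: zero-freeness of $f$, $g$, $\partial_u(f)$, $\partial_u(g)$ read off from the unit equation; existence of global logarithms on the simply connected $\mathbb{C}^m$; and restriction to the complex lines $\zeta\mapsto z_0+\zeta u$, which reduces matters to the classical one-variable Theorem A and yields $\partial_u^2(\alpha)\equiv\partial_u^2(\beta)\equiv 0$. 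Logarithmic differentiation of $\partial_u(\alpha)\,\partial_u(\beta)\,e^{(n+1)(\alpha+\beta)}\equiv 1$ then forces $\partial_u(\beta)\equiv-\partial_u(\alpha)$ and the relation $c^2e^{(n+1)(\alpha+\beta)}=-1$. What your approach buys is transparency and the full range $n\geq 1$ with no several-variable value-distribution theory at all; what it gives up is any prospect of handling the meromorphic case, where the paper's machinery is genuinely needed.

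The one step you flag as the main obstacle --- upgrading $\partial_u(c)\equiv 0$ to honest constancy of $c=\partial_u(\alpha)$ --- is not actually required, and indeed cannot be carried out, because for $m\geq 2$ the function $c$ need not be constant. Take $m=2$, $u=(1,0)$, let $\psi$ be any non-constant entire function of $z_2$ alone, and set $\alpha=i\,e^{-(n+1)\psi(z_2)/2}z_1$ and $\beta=\psi(z_2)-\alpha$; then $f=e^{\alpha}$ and $g=e^{\beta}$ satisfy $f^n\partial_u(f)g^n\partial_u(g)\equiv 1$ while $\partial_u(\alpha)=i\,e^{-(n+1)\psi(z_2)/2}$ is non-constant. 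The paper's own proof delivers exactly the same conclusion as yours: in Sub-case 1.1 it derives only $\partial_u^2(\beta)\equiv 0$, and there $c^2=-h^{n+1}$ with $h$ non-constant but $\partial_u(h)\equiv 0$, so the ``$c$'' in the statement of Theorem \ref{t2.1} must be read as an entire function annihilated by $\partial_u$ rather than as an element of $\mathbb{C}$. With that reading your proof is already complete; delete the proposed growth argument and simply record $\partial_u(c)\equiv 0$, which you have from $\partial_u^2(\alpha)\equiv 0$.
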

\begin{theo}\label{t2.2} Let $f:\mathbb{C}^m\to \mathbb{P}^1$ and $g:\mathbb{C}^m\to \mathbb{P}^1$ be two non-constant meromorphic functions and let $n\geq 6$ be an integer. 
%Let us choose $u\in S_m$ such that $\partial_u(f)\not\equiv 0$ and $\partial_u(g)\not\equiv 0$.
%and $N_{\partial_u(f),f}\big(r,\frac{1}{\partial_u(f)}\big)+N_{\partial_u(g),g}\big(r,\frac{1}{\partial_u(g)}\big)=0$.
If 
\[f^n\partial_u(f)g^n\partial_u(g)\equiv 1,\]
then the conclusions of Theorem \ref{t2.1} hold.
\end{theo}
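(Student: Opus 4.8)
The natural route is to reduce Theorem~\ref{t2.2} to the one–variable result Theorem~B by slicing $\mathbb{C}^m$ along the complex lines tangent to $u$, then to globalise the resulting line–by–line information and read off the precise form of $f$ and $g$. For $z_{0}\in\mathbb{C}^{m}$ set $L_{z_{0}}=\{z_{0}+tu:\,t\in\mathbb{C}\}$ and $\varphi_{z_{0}}(t)=f(z_{0}+tu)$, $\psi_{z_{0}}(t)=g(z_{0}+tu)$. Since $\partial_{u}$ is precisely differentiation along the lines $L_{z_{0}}$, we have $\varphi_{z_{0}}'(t)=(\partial_{u}f)(z_{0}+tu)$ and $\psi_{z_{0}}'(t)=(\partial_{u}g)(z_{0}+tu)$, so restricting \eqref{2.1} to $L_{z_{0}}$ gives the one–variable identity
\[
\varphi_{z_{0}}^{\,n}\,\varphi_{z_{0}}'\;\psi_{z_{0}}^{\,n}\,\psi_{z_{0}}'\equiv 1 .
\]
For every $z_{0}$ outside a thin set $E$ (those $z_{0}$ whose line lies in the polar set of $f$ or of $g$, or in a fibre of $f$ or of $g$), the functions $\varphi_{z_{0}}$ and $\psi_{z_{0}}$ are non-constant meromorphic on $\mathbb{C}$, and Theorem~B — whose hypothesis is exactly $n\ge 6$ — yields nonzero constants $c(z_{0}),c_{1}(z_{0}),c_{2}(z_{0})$ with
\[
\varphi_{z_{0}}(t)=c_{1}(z_{0})e^{-c(z_{0})t},\qquad \psi_{z_{0}}(t)=c_{2}(z_{0})e^{c(z_{0})t},\qquad c(z_{0})^{2}\bigl(c_{1}(z_{0})c_{2}(z_{0})\bigr)^{n+1}=-1 .
\]

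In particular $f$ and $g$ have neither zeros nor poles on any such line $L_{z_{0}}$. Since the good lines cover a dense subset of $\mathbb{C}^{m}$, the zero and polar divisors of $f$ and of $g$ — being analytic sets that meet no good line — should be forced to be empty, so that $f$ and $g$ are nowhere–zero holomorphic functions; as $\mathbb{C}^{m}$ is simply connected we may then write $f=e^{\alpha}$, $g=e^{\beta}$ with $\alpha,\beta$ non-constant entire. Substituting into \eqref{2.1} gives $\partial_{u}(\alpha)\,\partial_{u}(\beta)\,e^{(n+1)(\alpha+\beta)}\equiv 1$ with $\partial_{u}(\alpha),\partial_{u}(\beta)\not\equiv 0$. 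Along a good line $L_{z_{0}}$ the function $\alpha$ is affine in $t$ with slope $-c(z_{0})$ and $\beta$ is affine with slope $c(z_{0})$, whence $\partial_{u}(\alpha)+\partial_{u}(\beta)=0$ on $L_{z_{0}}$ and therefore $\partial_{u}(\beta)=-\partial_{u}(\alpha)$ identically. Writing $c:=\partial_{u}(\alpha)$, the identity becomes $-c^{2}e^{(n+1)(\alpha+\beta)}\equiv 1$, i.e.\ $c^{2}e^{(n+1)(\alpha+\beta)}=-1$; differentiating this along $\partial_{u}$ and using $\partial_{u}(\alpha+\beta)=0$ together with the fact that entire functions on $\mathbb{C}^{m}$ form an integral domain gives $\partial_{u}(c)=\partial_{u}^{2}(\alpha)\equiv 0$. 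This is exactly the conclusion of Theorem~\ref{t2.1}.

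The delicate point — the one I expect to be the main obstacle — is the globalisation step. Slicing controls $f$ and $g$ only along lines parallel to $u$, so it gives no information about zeros or poles lying on a hypersurface to which $u$ is everywhere tangent, i.e.\ a component $\{h=0\}$ with $h\mid\partial_{u}h$: on a transverse component a zero of $f$ of generic multiplicity $p$ forces $f^{n}\partial_{u}f$ to vanish there to order $(n+1)p-1$, hence $g^{n}\partial_{u}g$ to have a pole of that order, hence a pole of $g$ of some order $q$ with $(n+1)(p-q)=2$, which is impossible for $n\ge 6$ (and similarly for poles of $f$); but along a tangent component $\partial_{u}$ does not lower the order of vanishing, so this count degenerates. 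Ruling out tangent components of the zero and polar divisors is therefore the crux, and it is here that the standing hypotheses $u\in S_{m}$ and $\partial_{u}(f)\not\equiv 0$, together with the generic–cancellation principle recalled in the preliminaries, must be exploited; one should check this case with particular care. Alternatively, one may bypass the slicing and imitate Yang–Hua's original reasoning directly in $\mathbb{C}^{m}$, replacing $f'$ by $\partial_{u}f$ and the classical value–distribution estimates by their several–variable analogues from the preliminaries, but the same multiplicity issue resurfaces there.
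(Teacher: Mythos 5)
Your slicing reduction is a genuinely different route from the paper's: the paper argues directly in $\mathbb{C}^m$, first performing a local multiplicity count in coordinates adapted to the polar divisor to show $f\neq 0$ and $g\neq 0$, and then running a Nevanlinna-theoretic analysis of the auxiliary entire function $h=1/(fg)$ via the several-variable logarithmic derivative lemma and the identity $\xi^{2}=-h^{n+1}\bigl(\partial_u(h)/h\bigr)^{2}/4$. Your approach buys a quick reduction to Theorem~B on generic lines, but everything then hinges on the step you yourself flag as ``the crux'': ruling out components of the zero and polar divisors to which $u$ is everywhere tangent, i.e.\ $u$-invariant hypersurfaces. You do not close this case, and it is a genuine gap.

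Worse, the gap cannot be closed, because the statement fails exactly there. Take $m=2$, $u=(1,1)\in S_{2}$, and
\[
f(z_1,z_2)=(z_1-z_2)\,e^{z_1+z_2},\qquad g(z_1,z_2)=c\,(z_1-z_2)^{-1}e^{-(z_1+z_2)}.
\]
Since $\partial_u(z_1-z_2)=0$, one computes $f^{n}\partial_u(f)=2(z_1-z_2)^{n+1}e^{(n+1)(z_1+z_2)}$ and $g^{n}\partial_u(g)=-2c^{n+1}(z_1-z_2)^{-(n+1)}e^{-(n+1)(z_1+z_2)}$, so $f^{n}\partial_u(f)g^{n}\partial_u(g)=-4c^{n+1}\equiv 1$ once $c^{n+1}=-1/4$; yet $f$ vanishes on the $u$-invariant hyperplane $\{z_1=z_2\}$ and hence is not of the form $e^{\alpha}$. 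This is consistent with your own diagnosis: on an invariant component the transversal count $(n+1)(p-q)=2$ degenerates to $p=q$ (the directional derivative does not lower the vanishing order there), which is satisfiable, and no line $L_{z_0}$ parallel to $u$ detects such a divisor. So no amount of ``care'' in that case rescues the argument; for $m\ge 2$ the theorem needs an additional hypothesis (for instance that no component of the divisors of $f$ and $g$ is invariant under translation by $u$, or the ``for all $j$'' hypothesis of Corollary~\ref{c2.2}). For what it is worth, the paper's own Step~3 has the same blind spot: the relation $(n+1)(l-k)=2$ is read off from the leading terms of (\ref{xx1})--(\ref{xx2}), which have the claimed orders only when $\partial_u(\varphi_1)$ does not vanish identically along $\{\varphi_1=0\}$.
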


Let $u=(u_1,\ldots,u_m)$ such that $u_j=1$ and $u_i=0$ for $i\neq j$. Clearly $\partial_u(f)=\partial_j(f)$ for $m>1$ and $\partial_u(f)=f^{(1)}$ for $m=1$ and $u_1=1$. Then from Theorem \ref{t2.2}, we get the following.

\begin{cor}\label{c2.1} Let $f:\mathbb{C}^m\to \mathbb{P}^1$ and $g:\mathbb{C}^m\to \mathbb{P}^1$ be two non-constant meromorphic functions and let $n\geq 6$ be an integer. If 
\[f^n\partial_j(f)g^n\partial_j(g)\equiv 1,\]
where $j\in\mathbb{Z}[1,m]$, then $f=\exp(\alpha)$ and $g=\exp(\beta)$, where $\alpha$ and $\beta$ are non-constant entire functions in $\mathbb{C}^m$ such that $\partial_j(\alpha)=c$, $\partial_j(\beta)=-c$ and $c^2e^{(n+1)(\alpha+\beta)}=-1$. 
\end{cor}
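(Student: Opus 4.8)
The plan is to obtain Corollary~\ref{c2.1} as a direct specialization of Theorem~\ref{t2.2}. First I would fix $j \in \mathbb{Z}[1,m]$ and take the particular direction vector $u = (u_1,\ldots,u_m)$ with $u_j = 1$ and $u_i = 0$ for $i \neq j$. The only thing to check before invoking Theorem~\ref{t2.2} is that this $u$ is admissible, i.e.\ that $u \in S_m$: indeed $u_1 + \cdots + u_m = u_j = 1 \neq 0$. Moreover, for any differentiable $F$ one has $\partial_u(F) = \sum_{i=1}^m u_i \partial_i(F) = \partial_j(F)$, so the hypothesis $f^n\partial_j(f)\,g^n\partial_j(g) \equiv 1$ is literally the hypothesis $f^n\partial_u(f)\,g^n\partial_u(g) \equiv 1$ of Theorem~\ref{t2.2} for this choice of $u$.

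Next I would address the running non-vanishing assumption $\partial_u(F) \not\equiv 0$ built into the definition of $\partial_u$: here it is automatic, since if $\partial_j(f) \equiv 0$ (or $\partial_j(g) \equiv 0$) the left-hand side of $f^n\partial_j(f)\,g^n\partial_j(g) \equiv 1$ would vanish identically, contradicting the equation. Hence both $\partial_u(f) = \partial_j(f) \not\equiv 0$ and $\partial_u(g) = \partial_j(g) \not\equiv 0$, and Theorem~\ref{t2.2} applies verbatim (note $n \geq 6$ is assumed in both statements).

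Applying Theorem~\ref{t2.2} then yields non-constant entire functions $\alpha,\beta$ on $\mathbb{C}^m$ with $f = \exp(\alpha)$, $g = \exp(\beta)$, $\partial_u(\alpha) = c$, $\partial_u(\beta) = -c$, and $c^2 e^{(n+1)(\alpha+\beta)} = -1$. Rewriting $\partial_u(\alpha) = \partial_j(\alpha)$ and $\partial_u(\beta) = \partial_j(\beta)$ gives exactly the assertion of Corollary~\ref{c2.1}. There is no genuine obstacle in this argument; the only points requiring a word of justification are the admissibility $u \in S_m$ and the non-triviality $\partial_j(f),\partial_j(g)\not\equiv 0$, both of which are immediate as above.
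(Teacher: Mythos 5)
Your proposal is correct and is essentially identical to the paper's own derivation: the paper also obtains Corollary~\ref{c2.1} by specializing Theorem~\ref{t2.2} to the direction vector $u$ with $u_j=1$ and $u_i=0$ for $i\neq j$, noting $\partial_u=\partial_j$. Your added remarks on the admissibility $u\in S_m$ and the automatic non-vanishing of $\partial_j(f)$ and $\partial_j(g)$ are correct and slightly more careful than the paper's one-line justification.
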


\begin{cor}\label{c2.2} Let $f:\mathbb{C}^m\to \mathbb{P}^1$ and $g:\mathbb{C}^m\to \mathbb{P}^1$ be two non-constant meromorphic functions and let $n\geq 6$ be an integer. If 
\[f^n\partial_j(f)g^n\partial_j(g)\equiv 1,\]
for all $j\in\mathbb{Z}[1,m]$, then $f(z)=c_1\exp(c(z_1+z_2+\ldots+z_m))$ and $g(z)=c_2\exp(-c(z_1+z_2+\ldots+z_m))$, where $c$, $c_1$ and $c_2$ are non-zero constants such that $c^2(c_1c_2)^{n+1}=-1$.
\end{cor}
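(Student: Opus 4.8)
The plan is to deduce Corollary~\ref{c2.2} from Corollary~\ref{c2.1} applied separately in each coordinate direction, and then to glue together the resulting one-direction descriptions of $f$ and $g$.

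First I would observe that if some $\partial_j(f)\equiv 0$ (or $\partial_j(g)\equiv 0$), then the left side of the $j$-th equation would vanish identically, contradicting the hypothesis; hence $\partial_j(f)\not\equiv 0$ and $\partial_j(g)\not\equiv 0$ for every $j$, so Corollary~\ref{c2.1} applies to each equation $f^n\partial_j(f)\,g^n\partial_j(g)\equiv 1$. It produces, for each $j$, a representation $f=\exp(\alpha)$, $g=\exp(\beta)$ with $\alpha,\beta$ entire. Since $f$ and $g$ are fixed and two entire functions on the connected space $\mathbb{C}^m$ with the same exponential differ by a constant in $2\pi i\mathbb{Z}$, we may and do use one single pair $(\alpha,\beta)$ for all $j$ simultaneously. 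Corollary~\ref{c2.1} then furnishes, for each $j\in\{1,\dots,m\}$, a constant $c_j$ with
\[
\partial_j(\alpha)=c_j,\qquad \partial_j(\beta)=-c_j,\qquad c_j^{2}\,e^{(n+1)(\alpha+\beta)}=-1 .
\]

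Next I would read off the structure. The scalar identity $c_j^{2}e^{(n+1)(\alpha+\beta)}=-1$ forces $e^{(n+1)(\alpha+\beta)}$ to be a nonzero constant, so $\alpha+\beta\equiv A$ for some $A\in\mathbb{C}$; feeding this back gives $c_j^{2}=-e^{-(n+1)A}$, independent of $j$, so all the $c_j$ share one common square, say $c^{2}$. Because every first partial $\partial_j(\alpha)=c_j$ is constant, $\alpha$ is affine, $\alpha(z)=\sum_{j=1}^{m}c_j z_j+b_0$, and therefore $\beta(z)=A-\alpha(z)=(A-b_0)-\sum_{j=1}^{m}c_j z_j$. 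Setting $c_1:=e^{b_0}$ and $c_2:=e^{A-b_0}$ yields
\[
f(z)=c_1\exp\Big(\textstyle\sum_{j=1}^m c_j z_j\Big),\qquad g(z)=c_2\exp\Big(-\textstyle\sum_{j=1}^m c_j z_j\Big),
\]
with $c^{2}(c_1c_2)^{n+1}=c^{2}e^{(n+1)A}=-1$; since the coefficients $c_j$ all have the common value $c$ (fixing the square root determined above), this is exactly the asserted form $f(z)=c_1\exp\big(c(z_1+\cdots+z_m)\big)$, $g(z)=c_2\exp\big(-c(z_1+\cdots+z_m)\big)$.

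The one step that needs care is the alignment in the second paragraph: Corollary~\ref{c2.1} is invoked $m$ times and, read literally, each invocation supplies its own decomposition $f=\exp(\alpha^{(j)})$, $g=\exp(\beta^{(j)})$ with its own constant, so one must justify replacing them by a single pair $(\alpha,\beta)$ before differentiating in the various directions and combining the relations. Once this is secured, the rest is a short computation, and the constraint $c^{2}(c_1c_2)^{n+1}=-1$ drops out of the single scalar relation $c_j^{2}e^{(n+1)(\alpha+\beta)}=-1$ together with $\alpha+\beta\equiv A$.
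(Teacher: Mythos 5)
Your route is essentially the paper's: invoke the single\mbox{-}direction result (Corollary~\ref{c2.1}) once for each coordinate direction, conclude that the exponents $\alpha,\beta$ are affine, and assemble the answer. You are in fact more careful than the paper on two points: you justify replacing the $m$ a priori different representations $f=\exp(\alpha^{(j)})$ by a single pair $(\alpha,\beta)$ (correctly, via discreteness of $2\pi i\mathbb{Z}$ on the connected space $\mathbb{C}^m$), and you acknowledge that each direction a priori supplies its own constant $c_j$. You also get affineness of $\alpha$ immediately from the constancy of all first partials, whereas the paper runs an induction on $m$ using only $\partial^2\alpha/\partial z_i^2\equiv 0$; your shortcut is legitimate and cleaner given that $\partial_j(\alpha)=c_j$ is constant for every $j$.

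There is, however, a genuine gap at your last step, namely the parenthetical ``since the coefficients $c_j$ all have the common value $c$.'' What you have actually derived is $c_j^{2}=-e^{-(n+1)A}$ for every $j$, which pins down each $c_j$ only up to sign, and the signs can genuinely differ between directions: for $m=2$ the pair $f=\exp\bigl(i(z_1-z_2)\bigr)$, $g=1/f$ satisfies $f^n\partial_j(f)\,g^n\partial_j(g)\equiv 1$ for $j=1,2$ (each product equals $(\pm i)(\mp i)(fg)^{n+1}=1$), yet $f$ is not of the form $c_1\exp\bigl(c(z_1+z_2)\bigr)$. So the step cannot be repaired without either an additional argument or weakening the conclusion to $f=c_1\exp\bigl(\sum_j\epsilon_j c z_j\bigr)$ with $\epsilon_j\in\{\pm 1\}$. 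To be fair, the paper's own proof makes exactly the same silent identification --- it writes $\partial_j(\alpha)=c$ with one $c$ for all $j$ from the very first line --- so you have inherited the defect rather than introduced it; but as a standalone argument this is the one step that fails, and you should flag it rather than assert it.
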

\begin{table}[H]
	\centering
	\renewcommand{\arraystretch}{1.5}
	\begin{tabular}{|p{2.5cm}|p{2.5cm}|p{2cm}|p{3.5cm}|p{3cm}|}
		\hline
		\textbf{Result} & \textbf{Domain / Type of Functions} & \textbf{Condition on $n$} & \textbf{Equation Assumed} & \textbf{Conclusions} \\
		\hline
		\textbf{Theorem \ref{t2.1}} &
		$f,g:\mathbb{C}^m \to \mathbb{C}$, non-constant entire functions &
		$n \ge 1$ &
		$(f^n\partial_u f)(g^n\partial_u g) \equiv 1$ &
		$f = e^{\alpha},\ g = e^{\beta}$, where $\alpha,\beta$ are non-constant entire functions such that $\partial_u(\alpha) = c$, $\partial_u(\beta) = -c$, and $c^2 e^{(n+1)(\alpha+\beta)} = -1$. \\
		\hline
		\textbf{Theorem \ref{t2.2}} &
		$f,g:\mathbb{C}^m \to \mathbb{P}^1$, non-constant meromorphic functions &
		$n \ge 6$ &
		$f^n\partial_u(f)g^n\partial_u(g)\equiv 1$ &
		Same conclusions as Theorem \ref{t2.1}. \\
		\hline
		\textbf{Corollary \ref{c2.1}} &
		$f,g:\mathbb{C}^m \to \mathbb{P}^1$, non-constant meromorphic functions &
		$n \ge 6$ &
		$f^n\partial_j(f)g^n\partial_j(g)\equiv 1$ (for a fixed $j \in \mathbb{Z}[1,m]$) &
		$f = e^{\alpha},\ g = e^{\beta}$, where $\alpha,\beta$ are non-constant entire functions such that $\partial_j(\alpha)=c$, $\partial_j(\beta)=-c$, and $c^2 e^{(n+1)(\alpha+\beta)} = -1$. \\
		\hline
		\textbf{Corollary \ref{c2.2}} &
		$f,g:\mathbb{C}^m \to \mathbb{P}^1$, non-constant meromorphic functions &
		$n \ge 6$ &
		$f^n\partial_j(f)g^n\partial_j(g)\equiv 1$ for all $j \in \mathbb{Z}[1,m]$ &
		$f(z)=c_1 e^{c(z_1+z_2+\cdots+z_m)}$, $g(z)=c_2 e^{-c(z_1+z_2+\cdots+z_m)}$, where $c,c_1,c_2 \neq 0$ and $c^2(c_1c_2)^{n+1} = -1$. \\
		\hline
	\end{tabular}\vspace{1cc}
	\caption{Comparison of Theorems \ref{t2.1}, \ref{t2.2} and Corollaries \ref{c2.1}, \ref{c2.2}.}
\end{table}

%In this paper, we use a methodology that is similar to \cite{YH1} but with necessary modifications for the case of higher dimensions.

\subsection {{\bf Auxiliary lemmas}}
In the proof of Theorems \ref{t2.1}-\ref{t2.2}, we use of the following key lemmas.
%First we recall the lemma of logarithmic derivative:
\begin{lem}\label{l2}\cite[Lemma 1.37]{HLY1} Let $f:\mathbb{C}^m\to\mathbb{P}^1$ be a non-constant meromorphic function and let  $I=(\alpha_1,\alpha_2,\ldots,\alpha_m)\in \mathbb{Z}^m_+$ be a multi-index. Then for any $\varepsilon>0$, we have
\[\parallel\;m\left(r,\partial^I(f)/f\right)\leq |I|\log^+T(r,f)+|I|(1+\varepsilon)\log^+\log T(r,f)+O(1)\]
where $\parallel$ indicates that the inequality holds only outside a set of finite measure on $\mathbb{R}^+$
%for all large $r$ outside a set $E$ with $\int_E d\log r<\infty$.
\end{lem}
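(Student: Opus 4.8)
The plan is to reduce this several-variable estimate to Nevanlinna's classical one-variable lemma on the logarithmic derivative via the slicing (integral-geometry) method; cf.\ the treatments of Vitter and Biancofiore--Stoll. I would first establish the case $|I|=1$, i.e.\ for a single partial derivative $\partial_j(f)=\partial f/\partial z_j$, and then pass to general multi-indices by induction on $|I|$.

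\emph{Case $|I|=1$.} For $w\in\mathbb{C}^m\langle 1\rangle$ let $f_w(\zeta)=f(\zeta w)$ be the restriction of $f$ to the complex line through $0$ in direction $w$; this is a one-variable meromorphic function, and by the chain rule its derivative $f_w'$ is the restriction to that line of $\partial_w(f)=\sum_j w_j\partial_j(f)$. The essential analytic input is the integral-geometric identity expressing the $\mathbb{C}^m$ proximity function $m(r,h)$ of any meromorphic $h$ as the average over $w$ (with respect to the rotation-invariant measure on $\mathbb{C}^m\langle 1\rangle$) of the one-variable proximity functions $m(r,h_w)$, together with the companion identity for the characteristic function $T(r,h)$. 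Applying the one-variable lemma $m(r,f_w'/f_w)\le \log^+T(r,f_w)+(1+\varepsilon)\log^+\log T(r,f_w)+O(1)$ on each slice, integrating over $w$, and using the concavity of $\log^+$ (Jensen's inequality) to bring the average inside, yields the desired bound for $m(r,\partial_w(f)/f)$ and hence, after a linear-algebra argument over finitely many coordinate directions, for each $m(r,\partial_j(f)/f)$.

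\emph{Induction on $|I|$.} Writing $\partial^I=\partial_k\circ\partial^{I'}$ with $|I'|=|I|-1$, factor $\partial^I f/f=(\partial_k(\partial^{I'}f)/\partial^{I'}f)\cdot(\partial^{I'}f/f)$. Subadditivity of $m(r,\cdot)$ under products plus the case $|I|=1$ applied to the meromorphic function $\partial^{I'}f$ reduces matters to bounding $T(r,\partial^{I'}f)$ in terms of $T(r,f)$; this follows from $m(r,\partial^{I'}f)\le m(r,f)+m(r,\partial^{I'}f/f)$ together with the elementary estimate of the poles of a derivative, $N(r,\partial^{I'}f)\le N(r,f)+|I'|\,\overline{N}(r,f)$. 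Iterating gives a factor $|I|$ in front of the $\log^+T$ and $\log^+\log T$ terms, as claimed.

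\emph{Main obstacle.} The one-variable lemma on each slice holds only outside an exceptional set of finite measure in $r$ that a priori depends on the direction $w$, so one cannot merely take a union over the continuum of directions. The way around this is to keep, for all $r$, an inequality of the shape $m(r,\partial^I f/f)\le |I|\log^+\!\big(\text{a valence-type quantity together with its derivative}\big)$ and to invoke the standard Borel/calculus lemma on the logarithmic derivative of an increasing function only once, globally, at the end; this produces the single finite-measure exceptional set appearing in the statement. Carrying out this uniformization --- rather than arguing slice by slice --- is the delicate part; the remainder is the routine transfer of Nevanlinna's original computation through the averaging identities.
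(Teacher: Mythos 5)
The paper does not prove this lemma: it is quoted verbatim from \cite[Lemma 1.37]{HLY1} as an auxiliary tool, so there is no in-paper argument to compare yours against, and I can only assess your sketch on its own terms.

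Your overall strategy (reduce to the one-variable lemma on one-dimensional slices, average, and invoke the Borel growth lemma only once, globally, to obtain a single exceptional set) is a recognized route, and your ``main obstacle'' paragraph correctly identifies and correctly resolves the exceptional-set issue. The genuine gap is in the slicing step itself. With central slicing $f_w(\zeta)=f(\zeta w)$ the chain rule gives $f_w'(\zeta)=(\partial_w f)(\zeta w)$: the slice derivative is the directional derivative \emph{in the direction of the slice}, which varies with $w$. Averaging $m(r,f_w'/f_w)$ over $w\in\mathbb{C}^m\langle 1\rangle$ therefore estimates $\int_{\mathbb{C}^m\langle r\rangle}\log^+\bigl|(\partial_{z/\|z\|}f)(z)/f(z)\bigr|\,\sigma_m$ --- essentially the Euler (radial) derivative $\sum_j z_j\partial_j f$ --- and not $m(r,\partial_j f/f)$ for a fixed $j$. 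For a fixed direction $e_j$, only the single central line in that direction has slice derivative equal to $\partial_j f$, a set of measure zero among slices, so no ``linear-algebra argument over finitely many coordinate directions'' can recover the individual partials from the central average: every fixed direction suffers the identical defect. The standard repair is to fiber $\mathbb{C}^m$ by the \emph{parallel} lines $\{a+\zeta e_j:\zeta\in\mathbb{C}\}$ with $a\in e_j^{\perp}$; these meet the ball $\mathbb{C}^m[r]$ in discs of varying radii $\sqrt{r^2-\|a\|^2}$, so one needs the two-radius (valid for all $r<\rho$) form of the one-variable estimate on each disc and a more delicate disintegration of $\sigma_m$ and of $T(r,\cdot)$ over this fibration before Jensen's inequality and the single application of the Borel lemma can be run. (Alternatively one can avoid slicing altogether via a several-variable Gol'dberg--Grinshtein-type bound on $\int_{\mathbb{C}^m\langle r\rangle}|\partial_j f/f|^{\lambda}\,\sigma_m$ for $0<\lambda<1$.) Your induction on $|I|$ and the pole estimate $N(r,\partial^{I'}f)\le N(r,f)+|I'|\,\overline{N}(r,f)$ are fine once the first-order case is secured.
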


\begin{lem}\label{l7a}\cite[Lemma 1.68]{HLY1} Let $f_1:\mathbb{C}^m\to\mathbb{P}^1$ and $f_2:\mathbb{C}^m\to\mathbb{P}^1$ be two non-constant meromorphic functions. Then for $r>0$ we have 
\[\parallel\;N(r,0;f_1f_2)-N(r,f_1f_2)=N(r,0;f_1)+N(r,0;f_2)-N(r,f_1)-N(r,f_2).\]
\end{lem}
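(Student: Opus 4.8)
The plan is to recognize both sides as valence functions of \emph{signed} divisors and to exploit the additivity of the divisor under multiplication. For a meromorphic $f:\mathbb{C}^m\to\mathbb{P}^1$, recall that $\mu_f=\mu_f^0-\mu_f^\infty$, and by the definitions of the counting and valence functions the combination $N(r,0;f)-N(r,f)$ is exactly $N_{\mu_f}(r)$, the valence function of the signed divisor $\mu_f$. Hence the assertion is equivalent to the single identity $N_{\mu_{f_1f_2}}(r)=N_{\mu_{f_1}}(r)+N_{\mu_{f_2}}(r)$, which I would deduce from the pointwise (cycle-theoretic) divisor identity $\mu_{f_1f_2}=\mu_{f_1}+\mu_{f_2}$ together with the linearity of the maps $\nu\mapsto n_\nu(t)$ and $n\mapsto N(r)$.

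First I would establish the local additivity of multiplicities. Fixing $a\in\mathbb{C}^m$ and using the generic coprime representation $f_i=g_i/h_i$ with $g_i,h_i$ holomorphic and coprime at $a$, one has $f_1f_2=(g_1g_2)/(h_1h_2)$. Since the vanishing order of a product of holomorphic germs is the sum of the orders, the numerator and denominator orders add. The only possible loss of coprimality in $(g_1g_2)/(h_1h_2)$ comes from a zero of some $g_i$ meeting a zero of some $h_j$ with $i\neq j$, i.e.\ a zero of $f_i$ cancelling a pole of $f_j$; along such a component, if the numerator has order $p$ and the denominator order $q$, reducing to coprime form replaces these by $\max(p-q,0)$ and $\max(q-p,0)$, leaving the signed quantity $p-q$ unchanged. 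Consequently $\mu_{f_1f_2}=\mu^0_{f_1f_2}-\mu^\infty_{f_1f_2}=(\mu^0_{f_1}-\mu^\infty_{f_1})+(\mu^0_{f_2}-\mu^\infty_{f_2})=\mu_{f_1}+\mu_{f_2}$ as divisors on $\mathbb{C}^m$.

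Next I would pass from divisors to counting functions. Because $n_\nu(t)=t^{-2(m-1)}\int_{A[t]}\nu\,\upsilon_m^{m-1}$ is linear in $\nu$, the divisor identity integrates to $n(t,0;f_1f_2)-n(t,f_1f_2)=\sum_{i=1}^2\big(n(t,0;f_i)-n(t,f_i)\big)$. Integrating against $dt/t$ over $[r_0,r]$ and using the linearity of $N_\nu(r)=\int_{r_0}^r n_\nu(t)\,dt/t$ then gives $N(r,0;f_1f_2)-N(r,f_1f_2)=\sum_{i=1}^2\big(N(r,0;f_i)-N(r,f_i)\big)$, which is precisely the stated identity (in fact valid for every $r>r_0$, so the $\parallel$ restriction is automatic).

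The main obstacle is the rigorous justification of the first step in the several-variables setting, where $\mu_f$ is a codimension-one cycle rather than a discrete point count: one must verify the additivity along every irreducible component of the combined zero and pole set, including components produced by the cancellation of a zero of one factor against a pole of the other. A clean alternative that bypasses the cycle bookkeeping is the several-variable Jensen formula: writing $\Phi_f(r)=\int_{\mathbb{C}^m\langle r\rangle}\log|f|\,\sigma_m$, one has $N(r,0;f)-N(r,f)=\Phi_f(r)-\Phi_f(r_0)$, and since $\log|f_1f_2|=\log|f_1|+\log|f_2|$ almost everywhere on each sphere $\mathbb{C}^m\langle r\rangle$, the functional $\Phi$ is additive; subtracting the Jensen identities written for $f_1f_2$, $f_1$, and $f_2$ then yields the conclusion at once.
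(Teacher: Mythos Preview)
The paper does not supply its own proof of this lemma; it is quoted verbatim from \cite[Lemma~1.68]{HLY1} and used as a black box. Your argument is correct: the identity is precisely the additivity of the signed divisor $\mu_f=\mu_f^0-\mu_f^\infty$ under multiplication, and both of your routes---the cycle-theoretic one via $\mu_{f_1f_2}=\mu_{f_1}+\mu_{f_2}$ and the Jensen-formula one via the additivity of $\int_{\mathbb{C}^m\langle r\rangle}\log|f|\,\sigma_m$---are standard and complete. The Jensen route is in fact how the result is derived in \cite{HLY1}, so your alternative matches the original source.
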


\subsection {{\bf Proof of Theorem \ref{t2.2}}} 

\begin{proof}
	We start from the given differential equation  
	\begin{equation}\label{rbc1}
		f^n \partial_u(f)\, g^n \partial_u(g) \equiv 1.
	\end{equation}
	
	\medskip
	\noindent
	\textbf{Step 1. Preliminary setup.}
	Define 
	\[
	I = I_f \cup I_g, \qquad 
	S = \bigcup_{a \in \{0, \infty\}} \big( \operatorname{supp}\mu_f^a \big)_s \cup \big( \operatorname{supp}\mu_g^a \big)_s,
	\]
	where $A_s$ denotes the set of singular points of the analytic set $A$.  
	Note that $\dim (I \cup S) \leq m-2$. Choose $z_0 \in \mathbb{C}^m - (I \cup S)$.
	
	\smallskip
	If possible, suppose $\mu_f^0(z_0) = l > 0$. Then by (\ref{rbc1}), we have $\mu_g^{\infty}(z_0) = k > 0$.  
	Since $z_0 \notin S$, there exists a holomorphic coordinate system
	\[
	(U; \varphi_1, \ldots, \varphi_m)
	\]
	around $z_0$ in $\mathbb{C}^m - (I \cup S)$ such that  
	\[
	U \cap \operatorname{supp}\mu_g^{\infty} = \{ z \in U \mid \varphi_1(z) = 0 \},
	\qquad
	\varphi_j(z_0) = 0, \quad j = 1, \ldots, m
	\]
	(see Lemma 2.3 in \cite{FL1}). So biholomorphic coordinate transformation $z_j=z_j(\varphi_1, \ldots, \varphi_m)$, $j=1, \ldots, m$ near $0$ exists such that $z_0=z(0)=(z_1(0), \ldots, z_m(0))$. So we can write
	\[
	f = \varphi_1^{l}\hat{f}(\varphi_1, \ldots, \varphi_m),
	\qquad
	g = \varphi_1^{-k}\hat{g}(\varphi_1, \ldots, \varphi_m),
	\]
	where $\hat{f}$ and $\hat{g}$ are holomorphic and non-vanishing along $\operatorname{supp}\mu_g^{\infty}$.
	
	\medskip
	\noindent
	\textbf{Step 2. Computing directional derivatives.}
	For $i \in \{1, \ldots, m\}$, we have
	\[
	\frac{\partial f}{\partial z_i} 
	= \sum_{j=1}^m \frac{\partial f}{\partial \varphi_j}\frac{\partial \varphi_j}{\partial z_i}
	= l \varphi_1^{l-1}\hat{f}\frac{\partial \varphi_1}{\partial z_i}
	+ \varphi_1^l \sum_{j=1}^m \frac{\partial \hat{f}}{\partial \varphi_j}\frac{\partial \varphi_j}{\partial z_i}.
	\]
	Hence,
	\begin{equation}\label{xx0}
		\partial_u(f) = l \varphi_1^{l-1}\hat{f}\partial_u(\varphi_1)
		+ \varphi_1^l \sum_{j=1}^m \frac{\partial \hat{f}}{\partial \varphi_j}\partial_u(\varphi_j).
	\end{equation}
	Similarly,
	\begin{equation}\label{rbc1a}
		\partial_u(g) = -\frac{k}{\varphi_1^{k+1}}\hat{g}\partial_u(\varphi_1)
		+ \frac{1}{\varphi_1^k}\sum_{j=1}^m \frac{\partial \hat{g}}{\partial \varphi_j}\partial_u(\varphi_j).
	\end{equation}
	
	\medskip
	\noindent
	\textbf{Step 3. Expanding the main identity.}
	We obtain
	\begin{align}
		f^n\partial_u(f) &= l \varphi_1^{(n+1)l-1}\hat{f}^{n+1}\partial_u(\varphi_1)
		+ \varphi_1^{(n+1)l}\hat{f}^n \sum_{j=1}^m \frac{\partial \hat{f}}{\partial \varphi_j}\partial_u(\varphi_j), \label{xx1} \\
		g^n\partial_u(g) &= -\frac{k}{\varphi_1^{(n+1)k+1}}\hat{g}^{n+1}\partial_u(\varphi_1)
		+ \frac{1}{\varphi_1^{(n+1)k}}\hat{g}^n \sum_{j=1}^m \frac{\partial \hat{g}}{\partial \varphi_j}\partial_u(\varphi_j). \label{xx2}
	\end{align}
	From (\ref{rbc1}) and (\ref{xx1})-(\ref{xx2}), it follows that $(n+1)(l - k) = 2$, a contradiction.  
	Hence $l = \mu_f^0(z_0) = 0$, so $f \neq 0$. Similarly, $g \neq 0$. Again from (\ref{rbc1a}), we get 
\[\parallel N(r,\partial_u(g))\leq N(r,g)+\ol N(r,g).\]

Similarly 
\[\parallel N(r,\partial_u(f))\leq N(r,f)+\ol N(r,f).\]
	
	\medskip
	\noindent
	\textbf{Step 4. Setting up the auxiliary function.}
	Let
	\[
	h = \frac{1}{fg}.
	\]
	Then $h$ is an entire function. We consider two cases.
	
	\bigskip
	\noindent
	\textbf{Case 1:} $h$ is non-constant.  
	\smallskip
	Since $f\neq 0$, from (\ref{rbc1}), we see that $\mu^{\infty}_{g^n\partial_ug}\leq \mu_{\partial_u f}^0$ holds over $\text{supp}\;\mu^{\infty}_f$ generically and so (\ref{xx2}) gives 
$(n+1)\mu_g^{\infty}(z_0)+\mu_{g,1}^{\infty}(z_0)\leq \mu_{\partial_u f}^0(z_0)$.
Therefore 

	\begin{equation}\label{rbc3}
		\parallel\;(n+1) N(r, g)+\overline{N}(r, g)\leq N(r,0;\partial_u(f))
	\end{equation}
	
	Now by Lemma \ref{l7a}, we have
\bea\label{rbc4}
\parallel\;N\left(r,f/\partial_u(f)\right)-N\left(r,\partial_u(f)/f\right)&=&N(r,f)+N(r,0;\partial_u(f))-N(r,\partial_u(f))-N(r,0;f)\nonumber\\
&=&N(r,0;\partial_u(f))+N(r, f)-N(r,\partial_u(f)).\eea

Using the first main theorem, we get 
\[\parallel\;N\left(r,f/\partial_u(f)\right)-N\left(r,\partial_u(f)/f\right)=m\left(r,\partial_u(f)/f\right)-m\left(r,f/\partial_u(f)\right)+O(1)\]
and so from (\ref{rbc4}), we have 
\bea\label{rbc5} \parallel\;N(r,0;\partial_u(f))=N(r,\partial_u(f))-N(r, f)+m\left(r,\partial_u(f)/f\right)-m\left(r,f/\partial_u(f)\right)+O(1).\eea

Again from (\ref{rbc1}), we have $\partial_ug / g=\left(f / \partial_u(f)\right)h^{n+1}$ and so 
\[\parallel m(r,\partial_u(g)/g)\leq m(r,f/\partial_u(f))+(n+1)h,\]
i.e.,
\bea\label{rbc6} \parallel\;m\left(r,f/\partial_u(f)\right) \geq m\left(r,\partial_u(g)/g\right)-(n+1) m(r,h)-O(1).\eea

Note that $\parallel N(r,\partial_u(f))\leq N(r,f)+\ol N(r,f)$
and so from (\ref{rbc3}) and (\ref{rbc5})-(\ref{rbc6}), we get
\bea\label{rbc8}\parallel\;(n+1) N(r, g)+\overline{N}(r, g)&\leq& \ol N(r, f)+m\left(r,\partial_u(f)/f\right)-m\left(r,\partial_u(g)/g\right)\\&&+(n+1) m(r,h)+O(1).\nonumber\eea

Similarly, we have 
\bea\label{rbc9}\parallel\;(n+1) N(r, f)+\overline{N}(r, f)&\leq& \ol N(r, g)+m\left(r,\partial_u(g)/g\right)-m\left(r,\partial_u(f)/f\right)\\&&+(n+1) m(r,h)+O(1).\nonumber\eea

Now adding (\ref{rbc8}) and (\ref{rbc9}), we get
\bea\label{rbc10} \parallel\;N(r,f)+N(r,g)\leq 2m(r,h)+O(1).\eea

Since $f=1/gh$, we get 
\[\partial_u(f)=-\big(g\partial_u(h)+h\partial_u(g)\big)/g^2h^2\]
and so from (\ref{rbc1}), we get
\bea\label{rbc11}\xi^2=-h^{n+1}\left(\partial_u(h)/h\right)^2/4,\eea
where
\bea\label{rbc10a}\xi=\partial_u(g)/g+(\partial_u(h)/h)/2.\eea

\smallskip
First suppose $\xi \equiv 0$. Then from (\ref{rbc11}), we get $h^{n+1}=\left(\partial_u(h)/h\right)^2/4$.
Now by Lemma \ref{l2}, we get $\parallel (n+1)T(r, h)=(n+1)\;m(r, h)=o(T(r,h))$, which is absurd. 

\smallskip
Next suppose $\xi \not \equiv 0$. Clearly (\ref{rbc11}) yields
\bea\label{rbc12a}2\xi^2 (\partial_u(\xi)/\xi)=2\xi \partial_u(\xi)=(1/2)(\partial_u(h)/h)\partial_u\left(\partial_u(h)/h\right)-(n+1) h^n\partial_u(h)\eea
and so from (\ref{rbc11}), we get
\bea\label{rbc12} h^{n+1}H_1=\partial_u(h)\left(\partial_u\left(\partial_u(h)/h\right)-\partial_u(h)/h\right)/2h,\eea
where 
\[H_1=(n+1)\partial_u(h)/h-(2\partial_u(\xi)/\xi).\]

	Further manipulations involving $\xi$ defined by
	\[
	\xi = \frac{\partial_u(g)}{g} + \frac{1}{2}\frac{\partial_u(h)}{h}
	\]
	lead to the key relation
	\begin{equation}\label{rbc11}
		\xi^2 = -\frac{h^{n+1}}{4}\left(\frac{\partial_u(h)}{h}\right)^2.
	\end{equation}
	
	Now we consider following two sub-cases.
	
	\medskip
	\noindent
	\textbf{Sub-case 1.1:} $H_1 \equiv 0$.  Then $(n+1)\partial_u(h)/h-(2\partial_u(\xi)/\xi)\equiv 0$,
i.e, 
\bea\label{rbc12aa} (n+1)\partial_u(h)h\equiv 2\partial_u(\xi)/\xi.\eea

If $\dim(\mathbb{C}^m)=1$, then from (\ref{rbc12aa}), we get $\xi^2\equiv c_1h^{n+1}$ and so (\ref{rbc11}) gives 
\[(c_1+1)h^{n+1}\equiv (1/4)(h^{(1)}/h)^2.\]

Since $h^{(1)}\not\equiv 0$, $c_1\neq -1$ and so by Lemma \ref{l2}, we get $\parallel (n+1)T(r,h)=(n+1)m(r,h)=o(T(r,h))$,
which is absurd. Hence $\dim(\mathbb{C}^m)\geq 2$. Now from (\ref{rbc12}) and (\ref{rbc12aa}), we have 
\[\partial_u\left(\partial_u(h)/h\right)\equiv \partial_u(h)/h.\]

Again from (\ref{rbc12a}), we get
\beas (n+1)\partial_u(h)/h \xi^2=(1/2) \left(\partial_u(h)/h\right)^2-(n+1) h^{n+1}\partial_u(h)/h\eeas
and so from (\ref{rbc11}), we have
\bea\label{rbc12b} ((n+1)/4)\left(\partial_u(h)/h\right)^3=\left(\partial_u(h)/h\right)^2/2.\eea

\smallskip
First suppose $\partial_u(h)\not\equiv 0$. Then (\ref{rbc12b}) gives $\partial_u(h)/h=2/(n+1)$. Since 
$\partial_u\left(\partial_u(h)/h\right)\equiv \partial_u(h)/h$, we get a contradiction.

\smallskip
Next suppose $\partial_u(h)\equiv 0$. Clearly from (\ref{rbc11}) and (\ref{rbc10a}), we have respectively $\xi^2=-h^{n+1}$ and $\xi=\partial_u(g)/g$ and so $(\partial_u(g)/g)^2=-h^{n+1}$.  
%\bea\label{rbc12c}\alpha^2=-h^{n+1}\;\;\text{and}\;\;\alpha=\frac{\partial_u g}{g}.\eea
Since $h$ is an entire function and $g\neq 0$, we see that $g$ is also an entire function. 
Again $h=1/fg$ gives 
\[\partial_u(h)/h=-(\partial_u(f)/f)-(\partial_u(g)/g)\]
and so $\partial_u(f)/f=-\partial_u(g)/g$.
This shows that $f$ is also an entire function. Let us take $f=\exp(\alpha)$ and $g=\exp(\beta)$, where $\alpha$ and $\beta$ are non-constant entire functions in $\mathbb{C}^m$. Clearly $\partial_u(\alpha)\equiv-\partial_u(\beta)$. Since 
$\left(\partial_u(g)/g\right)^2=-h^{n+1}$, we have 
$(\partial_u(\beta))^2\exp(-(n+1)(\alpha+\beta))=-1$.
Consequently 
\[2\partial_u^2(\beta)-(n+1)\partial_u(\beta)(\partial_u( \alpha)+\partial_u(\beta))\equiv 0\]
and so $\partial_u^2(\beta)\equiv 0$. Finally, we have 
\[f=\exp(\alpha)\;\;\text{and}\;\;g=\exp(\beta),\]
where $\alpha$ and $\beta$ are non-constant entire functions in $\mathbb{C}^m$ such that $\partial_u(\alpha)=c$, $\partial_u(\beta)=-c$ and $c^2e^{-(n+1)(\alpha+\beta)}=-1$.

\medskip
\noindent
\textbf{Sub-case 1.2:} $H_1 \not\equiv 0$. Since $\partial_u(h)/h=-(\partial_u(f)/f)-(\partial_u(g)/g)$, from (\ref{rbc10a}), we get 
\[\xi=\left(\partial_u(g)/g-\partial_u(f)/f\right)/2\;\;\text{and so}\;\; 
\partial_u (\xi)=\left(\partial_u\left(\partial_u(g)/g\right)-\partial_u\left(\partial_u(f)/f\right)\right)/2.\] 

Therefore
\[H_1=-(n+1)\left(\partial_u(f)/f+\partial_u(g)/g\right)-\frac{\partial_u\left(\partial_u(g)/g\right)-\partial_u\left(\partial_u(f)/f\right)}{\xi}.\]

Since $f\neq 0$ and $g\neq 0$, it is easy to verify that $\mu^{\infty}_{H_1}\leq \mu^{\infty}_{f,1}+\mu^{\infty}_{g,1}+\mu^0_{\xi,1}$
holds over $\text{supp}\;\mu^{\infty}_{H_1}$ generically and so
\bea\label{rbc15} \parallel\;N(r,H_1)\leq \ol N(r,f)+\ol N(r,g)+\ol N(r,0;\xi).\eea

On the other hand using the first main theorem and Lemma \ref{l2} to (\ref{rbc11}), we have 
\bea\label{rbc16} \parallel\;2T(r,\xi)\leq 2T\left(r,\partial_u(h)/h\right)+(n+1)T(r,h)&\leq& 2N\left(r,\partial_u(h)/h\right)+(n+1)T(r,h)\\&\leq& 2\ol N(r,0;h)+(n+1)T(r,h)\nonumber\\&\leq& (n+3)T(r,h)+o(T(r,h)).\nonumber\eea

Again using the first main theorem, Lemma \ref{l2}, (\ref{rbc10}) and (\ref{rbc12})-(\ref{rbc16}), we get
\beas
\parallel\;(n+1) T(r, h)&\leq & m(r, h^{n+1}H_1)+m(r,0;H_1)+O(1)\nonumber \\
&\leq & m\left(r,(1/2)(\partial_u(h)/h)\left(\partial_u\left(\partial_u(h)/h)\right)-\partial_u(h)/h\right)\right)+T(r,H_1)\nonumber \\
&\leq & N(r,H_1)+o(T(r, h))+o(T(r, \xi))\nonumber\\
&\leq& \ol N(r,f)+\ol N(r,g)+\ol N(r,0;\xi)+o(T(r, h))+o(T(r,\xi))\nonumber\\
&\leq& ((n+7)/2)T(r,h)+o(T(r, h)),\nonumber\eeas
i.e., $\parallel\;(1/2)(n-5)T(r,h)\leq o(T(r,h))$, which is impossible.

\bigskip
	\noindent
	\textbf{Case 2:} $h$ is constant. Since $f\neq 0$ and $g\neq 0$,  both $f$ and $g$ are non-constant entire functions. Note that $f=1/gh$ and so $\partial_u(f)=-\partial_u(g)/g^2h$. Consequently  
	\[
	\frac{\partial_u(f)}{f} = -\frac{\partial_u(g)}{g},
	\]
	and so from (\ref{rbc1}) we get
	\[
	\partial_u(g) = cg, \quad \partial_u(f) = -cf,
	\]
	where $c = i h^{(n+1)/2}$.
	Hence,
	\[
	f = e^{\alpha}, \quad g = e^{\beta},
	\]
	where $\alpha, \beta$ are non-constant entire functions in $\mathbb{C}^m$ satisfying
	\[
	\partial_u(\alpha) = c, \quad
	\partial_u(\beta) = c, \quad
	c^2 e^{-(n+1)(\alpha+\beta)} = -1.
	\]
	
	\medskip
	\noindent
	\textbf{Conclusion.}
	In all cases, the only admissible non-trivial solutions are exponential-type functions:
	\[
	f = e^{\alpha}, \quad g = e^{\beta},
	\]
	with $\partial_u(\alpha)$ and $\partial_u(\beta)$ satisfying the above relations. Hence the proof.
\end{proof}

\subsection {{\bf Proof of Theorem \ref{t2.1}}} 
The proof of Theorem \ref{t2.1} follows directly from the proof of Theorem \ref{t2.2}. So we omit the detail.

\subsection*{\textbf{Proof of Corollary \ref{c2.2}}}
\begin{proof}
	By Theorem \ref{t2.2}, we have 
	\[
	f = \exp(\alpha), \qquad g = \exp(\beta),
	\]
	where $\alpha$ and $\beta$ are non-constant entire functions on $\mathbb{C}^m$ satisfying
	\[
	\partial_j(\alpha) = c, \qquad \partial_j(\beta) = c \quad \text{for all } j \in \mathbb{Z}[1,m],
	\]
	and
	\[
	c^2 e^{-(n+1)(\alpha+\beta)} = -1.
	\]
	
	\bigskip
	\noindent
	\textbf{Step 1. Showing that $\alpha$ is a polynomial.}
	From the given condition, we have
	\begin{equation}\label{n.1}
		\frac{\partial^2 \alpha(z)}{\partial z_i^2} \equiv 0, \qquad i = 1,2,\ldots,m.
	\end{equation}
	We will prove that $\alpha(z)$ is a polynomial in $\mathbb{C}^m$ by induction on $m$.
	
	\medskip
	\noindent
	\textbf{Base Case:} $\boldsymbol{m = 1.}$  
	If $\dim(\mathbb{C}^m) = 1$, then from (\ref{n.1}) it follows that $\alpha(z)$ is a polynomial in one variable.
	
	\medskip
	\noindent
	\textbf{Induction Step:} Let us suppose that $\dim(\mathbb{C}^m)=2$. Since 
\[\frac{\partial^2\alpha(z_1,z_2)}{\partial z_2^2}\equiv 0,\]
on integration, we have 
\bea\label {n.2} \alpha(z_1,z_2)=\phi_1(z_1)z_2+\phi_2(z_1),\eea
where $\phi_i(z_1)$'s are entire functions in $\mathbb{C}$ in the variable $z_1$. Note that $\frac{\partial^2\alpha(z_1,z_2)}{\partial z_1^2}\equiv 0$ and so (\ref{n.2}) gives $\phi_1^{(2)}(z_1)z_2+\phi_2^{(2)}(z_1)\equiv 0$,
which shows that $\phi_1^{(2)}(z_1)\equiv 0$ and $\phi_2^{(2)}(z_1)\equiv 0$. Therefore on integration, we get
$\phi_1(z_1)=c_1z_1+c_2$ and $\phi_2(z_1)=d_1z_1+d_2$, where $c_1,c_2,d_1$ and $d_2$ are constants in $\mathbb{C}$. Therefore from (\ref{n.2}), we get $\alpha(z_1,z_2)=(c_1z_1+c_2)z_2+(d_1z_1+d_2)$,
which shows that $\alpha(z_1,z_2)$ is a polynomial in $\mathbb{C}^2$. Now we fix $\dim(\mathbb{C}^m)\geq 2$ and assume that $\alpha(z)$ is a polynomial for variables of number at most $m-1$. Since $\frac{\partial^2\alpha(z)}{\partial z_m^2}\equiv 0$, we have 
\bea\label{n.3} \alpha(z_1,z_2,\ldots,z_m)= A(z_1,z_2,\ldots,z_{m-1})z_m+B(z_1,z_2,\ldots,z_{m-1}).\eea

Now using (\ref{n.1}) to (\ref{n.3}), we get
\bea\label{n.4} \frac{\partial^2 A(z_1,z_2,\ldots,z_{m-1})}{\partial z_i^2}z_m+\frac{\partial^2 B(z_1,z_2,\ldots,z_{m-1})}{\partial z_i^2}\equiv 0\eea
for $i=1,2,\ldots,m-1$. Therefore (\ref{n.4}) yields
\bea\label{n.5} \frac{\partial^2 A(z_1,z_2,\ldots,z_{m-1})}{\partial z_i^2}\equiv 0\;\;\text{and}\;\;\frac{\partial^2 B(z_1,z_2,\ldots,z_{m-1})}{\partial z_i^2}\equiv 0\eea
for $i=1,2,\ldots,m-1$. Then by the induction assumptions we find from (\ref{n.5}) that both $A(z_1,z_2,\ldots,z_{m-1})$ and $B(z_1,z_2,\ldots,z_{m-1})$ are polynomials in the variables $z_1,z_2,\ldots,z_m$. Therefore from (\ref{n.3}), we get that $\alpha(z_1,z_2,\ldots,z_m)$ is a polynomial in $z_1,z_2,\ldots,z_m$. Since $\partial_j(\alpha)=c$ for all $j\in\mathbb{Z}[1,m]$, we may assume that $\alpha(z)=c(z_1+z_2+\ldots+z_m)+d_1$.
	
	\bigskip
	\noindent
	\textbf{Step 2. Determining the form of $\beta$.}
	By the same argument, $\beta(z)$ is also a polynomial in $(z_1, z_2, \ldots, z_m)$.  
	Using $\partial_j(\beta) = c$ and the relation from equation (\ref{2.1}), we may take
	\[
	\beta(z) = -c(z_1 + z_2 + \cdots + z_m) + d_2,
	\]
	where $d_2$ is a constant.
	
	\bigskip
	\noindent
	\textbf{Step 3. Final form of $f$ and $g$.}
	Substituting $\alpha$ and $\beta$ into $f = e^{\alpha}$ and $g = e^{\beta}$, we obtain
	\[
	f(z) = c_1 \exp\big(c(z_1 + z_2 + \cdots + z_m)\big),
	\qquad
	g(z) = c_2 \exp\big(-c(z_1 + z_2 + \cdots + z_m)\big),
	\]
	where $c, c_1, c_2$ are nonzero constants satisfying
	\[
	c^2 (c_1 c_2)^{n+1} = -1.
	\]
	\medskip
	Hence the proof.
\end{proof}

\section {{\bf Uniqueness problems}}

In this section, our main objective is to consider the uniqueness problem for meromorphic functions in $\mathbb{C}^m$ concerning nonlinear differential polynomials generated by them sharing one value. Also our result ensures that Theorem C works in several complex
variables. Now we state our main results.

\begin{theo}\label{t3.1} Let $f:\mathbb{C}^m\to \mathbb{P}^1$ and $g:\mathbb{C}^m\to \mathbb{P}^1$ be two non-constant meromorphic functions and let $n\geq 11$ be an integer. 
	%Let us choose $u\in S_m$ such that $\partial_u(f)\not\equiv 0$ and $\partial_u(g)\not\equiv 0$.
	%and $N_{\partial_u(f),f}\big(r,\frac{1}{\partial_u(f)}\big)+N_{\partial_u(g),g}\big(r,\frac{1}{\partial_u(g)}\big)=0$. 
	If $f^n\partial_u(f)$ and $g^n\partial_u(g)$ share $a$ CM, where $a\in\mathbb{C}\backslash \{0\}$, then one of the following cases holds:
	\begin{enumerate}
		\item[(i)] $f\equiv dg$ for some $(n+1)$-th root of unity  $d$;
		\item[(ii)] $f=\exp(\alpha)$ and $g=\exp(\beta)$, where $c\in\mathbb{C}\backslash \{0\}$, $\alpha$ and $\beta$ are non-constant entire functions in $\mathbb{C}^m$ such that $\partial_u(\alpha)=c$, $\partial_u(\beta)=-c$ and $c^2e^{(n+1)(\alpha+\beta)}=-a^2$. 
	\end{enumerate} 
\end{theo}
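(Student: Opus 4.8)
The plan is to follow the classical Yang--Hua strategy for Theorem C, transplanting each step into the $\mathbb{C}^m$ setting by means of the Nevanlinna machinery recalled in the excerpt (Lemmas \ref{l2} and \ref{l7a}, the first main theorem, and the generic-cancellation argument used in the proof of Theorem \ref{t2.2}). Write $F = f^n\partial_u(f)$ and $G = g^n\partial_u(g)$. The CM-sharing hypothesis means $\mu^0_{F-a} = \mu^0_{G-a}$, so $(F-a)/(G-a) = e^\gamma$ for some entire function $\gamma$ on $\mathbb{C}^m$ (here I would use that $F-a$ and $G-a$ have the same zero divisor and that $F,G$ omit neither a pole structure obstruction — more precisely I would first repeat Step 1--3 of the proof of Theorem \ref{t2.2} to show $f\neq 0$, $g\neq 0$ are immaterial here but the pole-counting $\parallel N(r,\partial_u f)\le N(r,f)+\overline N(r,f)$ and its analogue for $g$ carry over verbatim). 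The first task is therefore a growth comparison: using $T(r,F)\le (n+1)T(r,f) + m(r,\partial_u f/f) + N(r,\partial_u f) + O(1)$ together with Lemma \ref{l2} one gets $T(r,F) = (n+1+o(1))T(r,f)$ up to lower-order terms, and similarly for $G$; combined with the second main theorem applied to $F$ with the three small targets $0,a,\infty$, the hypothesis $n\ge 11$ forces $\gamma$ to be constant, i.e. $F - a = A(G-a)$ for a nonzero constant $A$.

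Once $F-a = A(G-a)$ is in hand, the argument splits. If $A = 1$ then $f^n\partial_u f \equiv g^n\partial_u g$; integrating the directional derivative (i.e. noting $\partial_u(f^{n+1}) = (n+1)f^n\partial_u f = (n+1)g^n\partial_u g = \partial_u(g^{n+1})$ gives $\partial_u(f^{n+1}-g^{n+1})\equiv 0$) I would argue as follows: set $\psi = f/g$. If $\psi$ is non-constant one derives, exactly as in one variable, that $f = tg$ can only happen with $t$ an $(n+1)$-th root of unity, and the non-constant case is excluded by a Nevanlinna count (the $f^{n+1}-g^{n+1}$ relation factors through $(n+1)$ linear-in-$f$ factors over $g$, giving too many deficient values). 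This yields case (i). If $A\neq 1$, then $G$ omits the value $a/(a(1-A)^{-1}\cdot$ something$)$ — concretely $G = A^{-1}(F - a) + a$ shows $G$ omits $a(A-1)/A =: b \neq 0$, so $g^n\partial_u g$ omits a nonzero value; this is the genuinely rigid situation. Using the factorization $g^n\partial_u g - b = \frac{1}{n+1}\partial_u(g^{n+1}) - b$ and a second main theorem / Wronskian-type argument on $g$, one shows that $g$ must be zero-free, hence (since its pole structure is controlled) $g = e^\beta$; feeding this back through $F-a = A(G-a)$ and repeating for $f$ gives $f = e^\alpha$, and substituting into the sharing relation recovers $\partial_u\alpha = c$, $\partial_u\beta = -c$ and the normalization $c^2 e^{(n+1)(\alpha+\beta)} = -a^2$ by the same computation as in Sub-case 1.1 of Theorem \ref{t2.2} (differentiating the resulting identity $(\partial_u\beta)^2 e^{-(n+1)(\alpha+\beta)} = \text{const}$ forces $\partial_u^2\beta \equiv 0$). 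This gives case (ii).

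The main obstacle, as usual in several variables, is that there is no clean "integration" in the directional derivative $\partial_u$: from $\partial_u(u)\equiv 0$ one cannot conclude $u$ is constant (only that $u$ is constant along the lines $z + tu$), so every place where the one-variable proof integrates $f^n f' = (f^{n+1}/(n+1))'$ must be replaced by an argument that stays at the level of logarithmic derivatives and Nevanlinna functions, or else by the polynomial/affine-structure induction used in the proof of Corollary \ref{c2.2}. Concretely, the step "$\partial_u(f^{n+1}-g^{n+1})\equiv 0 \Rightarrow f^{n+1}-g^{n+1}$ is ``constant''\," is false as stated; instead I would keep the relation $f^n\partial_u f \equiv g^n\partial_u g$ and run the counting argument directly on $f$ and $g$ (bounding $\overline N(r,0;f-tg)$ over the $(n+1)$-th roots of unity $t$ using $f^{n+1}\equiv g^{n+1}$ — which does follow, since $\partial_u(f^{n+1}-g^{n+1})\equiv 0$ together with $F=G$ only gives the directional statement, so one must instead observe $f^{n+1}/g^{n+1}$ has trivial divisor and bounded characteristic, forcing it constant). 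Managing this subtlety carefully, and tracking the exact numerical threshold to confirm $n\ge 11$ (resp. $n\ge 7$ in the entire case, which should follow by dropping the pole terms), is where the real work lies; the remaining computations are routine adaptations of the one-variable estimates and of Sub-case 1.1 above.
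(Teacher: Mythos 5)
There is a genuine structural gap. Your plan hinges on reducing the CM--sharing hypothesis to an affine relation $F-a=A(G-a)$ with $A$ a nonzero constant, via the claim that $(F-a)/(G-a)=e^{\gamma}$ and that $n\ge 11$ forces $\gamma$ to be constant. Both halves fail. First, CM sharing only matches the \emph{zero} divisors of $F-a$ and $G-a$; the quotient still has poles along the poles of $F$ and zeros along the poles of $G$, so it is not of the form $e^{\gamma}$ for general meromorphic $f,g$ (and the Step 1--3 argument of Theorem \ref{t2.2} that you invoke to control this uses the product identity $f^n\partial_u(f)g^n\partial_u(g)\equiv 1$, which has no analogue under value sharing). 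Second, and decisively, even in the entire zero-free case $\gamma$ need not be constant: take $m=1$, $f=c_1e^{cz}$, $g=c_2e^{-cz}$ with $c^2(c_1c_2)^{n+1}=-a^2$; then $FG\equiv a^2$, so $G-a=-a(F-a)/F$, the two functions share $a$ CM, and $(F-a)/(G-a)=-F/a$ is a non-constant exponential. This is exactly the configuration that produces conclusion (ii) of the theorem, and your reduction excludes it: under $F-a=A(G-a)$ the relation $FG\equiv a^2$ would make $G$ satisfy a quadratic with constant coefficients, hence be constant. So the branch containing conclusion (ii) is unreachable in your scheme, and no tightening of the numerical estimates can repair that.

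What the paper does instead is prove a trichotomy (Lemma \ref{l6}): if $F/a$ and $G/a$ share $1$ CM, then either $\parallel\,T(r,F)\le N_2(r,F)+N_2(r,G)+N_2(r,0;F)+N_2(r,0;G)+o(T(r))$, or $F\equiv G$, or $FG\equiv a^2$. The proof of that lemma is the real work: one sets $F_1=F/a$, $F_2=(F/a-1)/(G/a-1)$, $F_3=-(G/a)F_2$, so that $F_1+F_2+F_3=1$, and applies the Borel-type Lemma \ref{l5} together with the pointwise Wronskian multiplicity inequality (\ref{rbb8}), verified case by case on local normal forms. The first branch is then killed by $n\ge 11$ using Lemma \ref{l7}; the branch $FG\equiv a^2$ is handled by Theorem \ref{t2.2} and yields (ii); the branch $F\equiv G$ yields (i) by essentially the roots-of-unity count you sketch. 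Your worry about the impossibility of ``integrating'' $\partial_u$ in several variables is legitimate, and the paper indeed avoids it by working with $h=f/g$ and the logarithmic derivative of $h^{n+1}-1$; but note that $F\equiv G$ alone does not give that $f^{n+1}/g^{n+1}$ has trivial divisor, as you suggest in passing. The missing idea in your proposal is the trichotomy lemma itself, in particular its multiplicative case.
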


\begin{theo}\label{t3.2} Let $f:\mathbb{C}^m\to \mathbb{P}^1$ and $g:\mathbb{C}^m\to \mathbb{P}^1$ be two non-constant meromorphic functions and let $n\geq 11$ be an integer. If $f^n\partial_{j}(f)$ and $g^n\partial_{j}(g)$ share $a$ CM, where $j\in\mathbb{Z}[1,m]$ and $a\in\mathbb{C}\backslash \{0\}$, then one of the following cases holds:
	\begin{enumerate}
		\item[(i)] $f\equiv dg$ for some $(n+1)$-th root of unity $d$;
		\item[(ii)] $f=\exp(\alpha)$ and $g=\exp(\beta)$, where $c\in\mathbb{C}\backslash \{0\}$, $\alpha$ and $\beta$ are non-constant entire functions in $\mathbb{C}^m$ such that $\partial_j(\alpha)=c$, $\partial_j(\beta)=-c$ and $c^2e^{(n+1)(\alpha+\beta)}=-a^2$. 
	\end{enumerate} 
\end{theo}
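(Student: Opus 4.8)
## Proof proposal for Theorem \ref{t3.1}

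The plan is to follow the classical Yang--Hua strategy, transposed to the Nevanlinna calculus in $\mathbb{C}^m$ that has been set up above (Lemmas \ref{l2} and \ref{l7a}, the First Main Theorem, and the generic cancellation of common factors). Write $F = f^n\partial_u(f)$ and $G = g^n\partial_u(g)$. Since $F$ and $G$ share $a$ CM, the function
\[
\Phi = \frac{F - a}{G - a}
\]
is, generically, without zeros or poles, so $\Phi = e^{\gamma}$ for some entire $\gamma$ on $\mathbb{C}^m$ (using the generic-factorization remark to pass from a nowhere-vanishing meromorphic function to an exponential). Thus $f^n\partial_u(f) - a = e^{\gamma}\big(g^n\partial_u(g) - a\big)$. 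The first task is to show $\gamma$ is constant. To do this I would estimate $T(r,f)$ in terms of $\overline{N}$-type quantities: as in Theorem \ref{t2.2}, from the representation of $\partial_u(f)$ near a pole of $f$ one gets $\parallel N(r,\partial_u(f)) \le N(r,f) + \overline N(r,f)$, hence $T(r,F) = (n+1)T(r,f) + S(r,f)$ up to the standard small-term control coming from Lemma \ref{l2} (the logarithmic-derivative lemma in several variables gives $m(r,\partial_u(f)/f) = S(r,f)$, where $S(r,f) = o(T(r,f))$ off a set of finite measure). Counting zeros of $F$ and $F-a$ against $T(r,F)$ via the Second Main Theorem for the three (or two) targets $0, a, \infty$ forces, when $n$ is large enough, that $N(r,\gamma$-related error$)$ is negligible, so $T(r,e^{\gamma}) = S(r,f) + S(r,g)$; combined with $T(r,f)$ and $T(r,g)$ being of comparable growth (which follows from the shared-value relation and $n\ge 11$), this yields $e^{\gamma} \equiv c$ constant.

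Once $\gamma$ is constant we have $f^n\partial_u(f) - a = c\,(g^n\partial_u(g) - a)$, i.e. $f^n\partial_u(f) = c\,g^n\partial_u(g) + a(1-c)$. Here I split into cases on $c$. If $c \ne 1$: introduce $\psi = \tfrac{1}{n+1}\partial_u(f^{n+1}) = f^n\partial_u(f)$ and the analogous object for $g$, so that $\partial_u\!\big(f^{n+1} - c\,g^{n+1}\big) = (n+1)a(1-c) \ne 0$; a growth comparison (the left side has characteristic $(n+1)\max\{T(r,f),T(r,g)\} + S$, while integrating a nonzero constant directional derivative back up is incompatible with $f,g$ nonconstant unless $f^{n+1}-c\,g^{n+1}$ is an affine function of the flow parameter of $u$) forces $f$ and $g$ to be entire of exponential type; feeding $f = e^{\alpha}$, $g = e^{\beta}$ back into the equation and matching, exactly as in Sub-case 1.2 and Case 2 of the proof of Theorem \ref{t2.2}, gives $\partial_u(\alpha) = c_0$, $\partial_u(\beta) = -c_0$, $\partial_u^2(\alpha) = \partial_u^2(\beta) = 0$, and the relation $c_0^2 e^{(n+1)(\alpha+\beta)} = -a^2$ by reducing to the equation of Theorem \ref{t2.2} after absorbing $a$ — this is conclusion (ii). If $c = 1$: then $f^n\partial_u(f) = g^n\partial_u(g)$, hence $\partial_u\!\big(f^{n+1} - g^{n+1}\big) \equiv 0$; since $u \in S_m$ and, generically along the flow of $u$, a function killed by $\partial_u$ is constant on the orbits, one deduces $f^{n+1} = g^{n+1} + C$ for a constant $C$. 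If $C = 0$ then $f = dg$ for an $(n+1)$-th root of unity $d$, which is conclusion (i). If $C \ne 0$, I rule it out by a second-main-theorem / Borel-type argument: $f^{n+1}$ and $g^{n+1}$ omit $0$ too highly (each value of $f^{n+1}$ is taken with multiplicity $\ge n+1$ except at zeros of $\partial_u f$), and $f^{n+1} - g^{n+1} = C$ with $n\ge 11$ contradicts the defect relation, exactly as in \cite{YH1}.

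I would organize the write-up as: (1) reduce to $\Phi = e^\gamma$; (2) the growth lemma $T(r,f)\asymp T(r,g)$ and $T(r,f^n\partial_u f) = (n+1)T(r,f) + S(r,f)$; (3) show $\gamma$ constant via the Second Main Theorem with the threshold $n \ge 11$; (4) the case $c = 1$ giving (i) or a contradiction; (5) the case $c \ne 1$ giving (ii), where the final normalization quotes Theorem \ref{t2.2}. The main obstacle I anticipate is Step (3): making the Second Main Theorem bookkeeping in $\mathbb{C}^m$ tight enough to extract the constant $11$, since one must carefully track the truncated counting functions $\overline N(r,0;F)$, $\overline N(r,a;F)$, $\overline N(r,F)$ and the contribution of the zeros of $\partial_u(f)$ and $\partial_u(g)$ — in the several-variables setting the analytic-set dimension subtleties (poles/zeros of $f,g$ and of $\partial_u f$ meeting in codimension $\ge 2$) must be handled by the generic-cancellation device so that all the pointwise multiplicity inequalities used in Theorem \ref{t2.2} remain valid here. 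A secondary technical point is justifying in Step (4)/(5) that $\partial_u(\varphi) \equiv 0$ on $\mathbb{C}^m$ forces $\varphi$ to be "constant in the $u$-direction" in a usable way; this is where the hypothesis $\sum u_j \ne 0$ (i.e. $u \in S_m$) and the linear change of coordinates aligning one axis with $u$ enter, paralleling the induction in the proof of Corollary \ref{c2.2}.
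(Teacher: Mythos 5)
Your overall plan diverges from the paper's: the paper reduces to $F=f^n\partial_u(f)/a$ and $G=g^n\partial_u(g)/a$ sharing $1$ CM and invokes a trichotomy lemma (Lemma \ref{l6}, proved via the Borel--Cartan identity $F_1+F_2+F_3=1$ and Wronskian multiplicity estimates in $\mathbb{C}^m$): either a second-main-theorem inequality holds (killed by $n\ge 11$), or $F\equiv G$ (leading to (i) via a roots-of-unity counting argument), or $FG\equiv 1$ (leading to (ii) via Theorem \ref{t2.2}, resp.\ Corollary \ref{c2.1} for the $\partial_j$ version stated here). Your route --- factor $(F-a)/(G-a)=e^{\gamma}$ and first prove $\gamma$ constant --- contains a genuine error: $\gamma$ cannot in general be shown constant, because for the solutions described by conclusion (ii) one has $FG\equiv a^2$, hence $F-a=-a(G-a)/G$ and $e^{\gamma}=-a/G$, a non-constant exponential with $T(r,e^{\gamma})=(n+1)T(r,g)+o(T(r,g))$. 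So the claimed chain ``$T(r,e^{\gamma})=S(r,f)+S(r,g)$, hence $e^{\gamma}\equiv c$'' is false on the very solutions the theorem must admit, and your subsequent dichotomy $c=1$ versus $c\neq 1$ cannot produce case (ii) at all. (Even granting smallness, $T(r,e^{\gamma})=o(T(r,f))$ would not imply constancy of $\gamma$ in $\mathbb{C}^m$.) The correct replacement for this step is precisely the content of Lemma \ref{l6}, whose Wronskian-based proof occupies most of Section 3 and is where the threshold $11$ is actually extracted, via Lemma \ref{l7} applied to $N_2(r,0;F)$ and $N_2(r,0;G)$.

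A second, smaller gap: in your case $c=1$ you deduce from $\partial_u(f^{n+1}-g^{n+1})\equiv 0$ that $f^{n+1}=g^{n+1}+C$ with $C$ a constant. In $\mathbb{C}^m$ with $m\ge 2$, $\partial_u\varphi\equiv 0$ only forces $\varphi$ to be constant along the $u$-direction; $C$ is a priori a nonconstant function of the transverse variables, and your Borel-type exclusion of $C\neq 0$ would have to be redone for a small \emph{function} rather than a constant. The paper sidesteps this by writing $f=hg$ and working with $\partial_u\bigl(g^{n+1}(h^{n+1}-1)\bigr)\equiv 0$, which gives $(n+1)g^n\partial_u(g)\equiv -(n+1)h^n\partial_u(h)/(h^{n+1}-1)$ and then a counting argument over the $n+1$ roots of unity of $h$ using only Lemmas \ref{l3} and \ref{l7}; this is the device you would need. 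Finally, for the statement as given (Theorem \ref{t3.2}) the paper's proof is a one-line specialization of Theorem \ref{t3.1}, substituting Corollary \ref{c2.1} for Theorem \ref{t2.2} at the point where $FG\equiv a^2$ is resolved; your write-up would need the same substitution.
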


For non-constant entire functions, we get the following corollary from Theorem \ref{t3.1}.
\begin{cor}\label{c3.1} Let $f:\mathbb{C}^m\to \mathbb{C}$ and $g:\mathbb{C}^m\to \mathbb{C}$ be two non-constant entire functions such that $\partial_u(f)\not\equiv 0$ and $\partial_u (g)\not\equiv 0$ and let $n\geq 7$ be an integer. If $f^n\partial_u(f)$ and $g^n\partial_u(g)$ share $a$ CM, where $a\in\mathbb{C}\backslash \{0\}$, then conclusions of Theorem \ref{t3.1} hold.
\end{cor}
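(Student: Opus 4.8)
The plan is to run the uniqueness machinery underlying Theorem \ref{t3.1} and to observe that, when $f$ and $g$ are entire, \emph{every} pole-counting term in that argument vanishes; this is exactly what lowers the admissible range of $n$ from $11$ to $7$. First I would normalize by setting
\[
F = \frac{f^n\,\partial_u(f)}{a}, \qquad G = \frac{g^n\,\partial_u(g)}{a},
\]
so that the hypothesis reads ``$F$ and $G$ share the value $1$ CM''. Since $f,g$ are non-constant entire with $\partial_u(f)\not\equiv0$ and $\partial_u(g)\not\equiv0$, both $F$ and $G$ are non-constant entire functions; in particular
\[
N(r,F)=N(r,G)=0, \qquad \overline{N}(r,f)=\overline{N}(r,g)=0.
\]
This single structural fact drives the whole improvement.

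Next I would follow the dichotomy used to prove Theorem \ref{t3.1}: form the several-variables analogue of the Lahiri difference operator,
\[
H = \left(\frac{\partial_u(\partial_u F)}{\partial_u F} - \frac{2\,\partial_u(F)}{F-1}\right) - \left(\frac{\partial_u(\partial_u G)}{\partial_u G} - \frac{2\,\partial_u(G)}{G-1}\right),
\]
and split into the cases $H\equiv0$ and $H\not\equiv0$. The case $H\equiv0$ does not use the size of $n$: integrating $H\equiv0$ (using that $F$ and $G$ share $1$ CM so that their common $1$-points cancel) yields a bilinear relation between $F$ and $G$, and the subsequent subcase analysis, together with an appeal to Theorem \ref{t2.2}, produces precisely alternatives (i) and (ii). This portion of the proof of Theorem \ref{t3.1} transfers verbatim, since it never invokes the pole-counting of $f$ or $g$. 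Thus the threshold on $n$ is used only in the case $H\not\equiv0$.

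In the case $H\not\equiv0$ I would invoke the standard sharing inequality
\[
T(r,F) \le N_2(r,0;F)+N_2(r,0;G)+N_2(r,\infty;F)+N_2(r,\infty;G)+S(r),
\]
together with the lower bound $\parallel (n+1)T(r,f)\le T(r,F)+\overline{N}(r,0;f)+S(r,f)$, which follows from the first main theorem and Lemma \ref{l2}. Here the entire hypothesis removes the two terms $N_2(r,\infty;F)=N_2(r,\infty;G)=0$ outright; moreover, in estimating $N_2(r,0;F)$ one exploits that each zero of $f$ is a zero of $F$ of order $(n+1)p-1\ge n$, so that it contributes only $\min(\cdot,2)$ to the truncated count, while the zeros of $\partial_u(f)$ away from the zeros of $f$ are controlled via Lemma \ref{l2} by $\overline{N}(r,0;f)$ alone (rather than by $\overline{N}(r,0;f)+\overline{N}(r,\infty;f)$, as in the meromorphic case). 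Carrying out this bookkeeping, the terms that in the meromorphic argument force $n\ge11$ are exactly the four pole-related contributions coming from $N_2(r,\infty;F)$, $N_2(r,\infty;G)$ and from the poles hidden inside the logarithmic-derivative corrections; with these removed, the deficit that equals $n-10$ in the meromorphic proof improves to $n-6$, and adding the symmetric inequality for $g$ yields $\parallel (n-6)\,[T(r,f)+T(r,g)]\le S(r)$, a contradiction for $n\ge7$. Conclusions (i) and (ii) then follow from the $H\equiv0$ case, so only $H\equiv0$ survives and the corollary is established.

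The main obstacle will be the careful constant-tracking in this last step: one must verify that every quantity that silently carried a factor $\overline{N}(r,\infty;\cdot)$ or $N(r,\infty;\cdot)$ in the proof of Theorem \ref{t3.1} genuinely drops out in the entire setting, and simultaneously re-derive the zero-counting estimate $\parallel N_2(r,0;F)\le 4\overline{N}(r,0;f)+S(r,f)$ sharply enough that the surviving coefficients sum to exactly $6$. Two further points demand attention: the appeals to Lemma \ref{l2} must be arranged so that all logarithmic-derivative proximity terms are absorbed into $S(r)$, and the passage from pointwise multiplicity inequalities (as in Steps~1--3 of the proof of Theorem \ref{t2.2}) to global counting-function inequalities must again be justified \emph{generically}, that is, off an analytic set of dimension $\le m-2$.
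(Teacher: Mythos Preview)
Your plan deviates from the paper's (implicit) argument in one structural way and contains one real gap.

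Structurally, the paper does not use the auxiliary function $H$ at all; the proof of Theorem~\ref{t3.1} rests on Lemma~\ref{l6}, which delivers the trichotomy (inequality)/($F\equiv G$)/($FG\equiv 1$) directly via a Wronskian argument, and Corollary~\ref{c3.1} is meant to follow simply by re-running that proof with all pole-counting terms set to zero. Your Lahiri-type $H$-function route is a legitimate alternative and ultimately lands on the same three cases, so this difference by itself is harmless.

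The gap is your claim that ``the case $H\equiv 0$ does not use the size of $n$'' and that ``this portion of the proof of Theorem~\ref{t3.1} transfers verbatim, since it never invokes the pole-counting of $f$ or $g$.'' This is false for the subcase $F\equiv G$ (Case~3 in the paper). There one must still pass from $f^n\partial_u(f)\equiv g^n\partial_u(g)$ to $f\equiv dg$, and the paper does this by studying $h=f/g$ and the distribution of its $(n{+}1)$-th-root-of-unity values, ending with $(n-1)T(r,h)\le \tfrac{n+5}{2}\,T(r,h)+o(T(r,h))$; that step very much uses the size of $n$ and also uses $\ol N(r,h)$, which even for entire $f,g$ need not vanish (it is bounded by $\ol N(r,0;g)$, not by $\ol N(r,\infty;g)$). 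It is true that for entire $f,g$ Case~3 can be sharpened---since $g^n\partial_u(g)$ is entire, equation~(\ref{t1.4}) forces every zero of $h^{n+1}-1$ to be a zero of $\partial_u(h)$, which eliminates the $(n+1)/2$-contribution and pushes the threshold there well below~$7$---but you have to actually make and prove this observation. Declaring that the step ``transfers verbatim'' is not a proof, and as written your plan is wrong exactly where Case~3 enters.
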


Following example ensures the necessity of the condition ``$a\in\mathbb{C}\backslash \{0\}$'' in Corollary \ref{c3.1}.
\begin{exm} Let $f(z)=\exp(\exp(z_1+\ldots+z_m))$ and $g(z)=\exp(z_1+\ldots+z_m)$. Clearly $f^n\partial_u(f)$ and $g^n\partial_u(g)$ share $0$ CM, but $f$ and $g$ do not satisfy the conclusion of Corollary \ref{c3.1}.
\end{exm}

\subsection {{\bf Auxiliary lemmas}}
In order to prove Theorems \ref{t3.1}-\ref{t3.2}, we require following lemmas.

%We first recall the following lemma related to a basic properties of generalized Wronskian.
\begin{lem}\label{l1}\cite[Corollary 1.40]{HLY1} Let $f_0, \ldots, f_n$ be linearly independent meromorphic functions in $\mathbb{C}^m$. Write $f = (f_0, \ldots, f_n)$. Then there are multi-index $\nu_i \in \mathbb{Z}_+^m$ with $0 < |\nu_i| \leq i$ $(i = 1, \ldots, n)$ such that $f,\partial^{\nu_1}(f), \ldots,\partial^{\nu_n}(f)$ are linearly independent over $\mathbb{C}^m$ and there exists partition 
	\[\lbrace \nu_1,\ldots,\nu_n\rbrace=\sideset{}{_{i=1}^s}{\bigcup} I_i\;\;(1\leq s \leq n)\]
	such that $I_k \subset\{\nu \in \mathbb{Z}_+^m \mid |\nu| = k\}$, $k = 1, \ldots, s$ and when $1 \leq k < s$ each element in 
	\[\lbrace \partial^\nu f \mid \nu \in \mathbb{Z}_+^m, |\nu| = k, \nu \notin I_k\rbrace\]
	can be expressed as a linear combination of the family 
	\[\lbrace f, \partial^\nu(f) \mid \nu \in \sideset{}{_{i=1}^k}{\bigcup} I_i\rbrace.\]
	
	%For such multi-index, ${\bf W}(hf_0, \ldots, hf_n) = h^{n+1}{\bf W}(f_0, \ldots, f_n)$
	%holds for any non-zero meromorphic function $h$ on $\mathbb{C}^m$.
\end{lem}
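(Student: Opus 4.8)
\textbf{Proof proposal for Lemma \ref{l1}.}

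The plan is to treat this as a Wronskian‑type selection lemma and to construct the required indices greedily, one level of differentiation at a time; the one genuinely analytic ingredient is that linear independence of $f_0,\dots,f_n$ over $\mathbb{C}$ forces the $\mathbb{C}$-span of all their simultaneous partial derivatives to be at least $(n+1)$-dimensional.

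First I would pass to the connected dense open set $\Omega\subset\mathbb{C}^m$ obtained by deleting the polar loci of $f_0,\dots,f_n$, on which every $f_j$ is holomorphic; a $\mathbb{C}$-linear relation holding on $\Omega$ extends to $\mathbb{C}^m$ by the identity theorem, so nothing is lost. For a multi-index $\nu$ write $\partial^\nu f=(\partial^\nu f_0,\dots,\partial^\nu f_n)$ and put $W=\operatorname{span}_{\mathbb{C}}\{\partial^\nu f:\nu\in\mathbb{Z}_+^m\}$. The crux is to show $\dim_{\mathbb{C}}W\ge n+1$, which I would do by contradiction: if $\dim_{\mathbb{C}}W=q+1\le n$, fix a basis $\partial^{\mu_0}f,\dots,\partial^{\mu_q}f$ with $\mu_0=\mathbf{0}$; since $\partial^{\mu_l+e_i}f\in W$ it is a $\mathbb{C}$-linear combination of this basis, so $\partial_iA=B_iA$ for \emph{constant} $(q+1)\times(q+1)$ matrices $B_i$, where $A=\big(\partial^{\mu_l}f_j\big)_{0\le l\le q,\,0\le j\le n}$. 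Read columnwise, every column $c_j=(f_j,\partial^{\mu_1}f_j,\dots,\partial^{\mu_q}f_j)^{\mathrm{T}}$ of $A$ solves the constant-coefficient linear system $\partial_ic=B_ic$ $(i=1,\dots,m)$ on the connected set $\Omega$, so evaluation at a fixed point is injective on the solution space and the latter has dimension $\le q+1\le n$. Hence the $n+1$ columns $c_0,\dots,c_n$ satisfy a nontrivial $\mathbb{C}$-linear relation, and passing to first components yields a nontrivial $\mathbb{C}$-linear relation among $f_0,\dots,f_n$, a contradiction.

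Next I would run the greedy construction. Set $\mathcal{F}_0=\{f\}$; given $\mathcal{F}_{k-1}=\{f\}\cup\{\partial^\nu f:\nu\in I_1\cup\cdots\cup I_{k-1}\}$, linearly independent over $\mathbb{C}$ with each $I_i\subset\{\nu:|\nu|=i\}$, let $\widetilde I_k\subset\{\nu:|\nu|=k\}$ be chosen so that $\{\partial^\nu f:\nu\in\widetilde I_k\}$ is a maximal family linearly independent modulo $\operatorname{span}\mathcal{F}_{k-1}$; if appending all of $\widetilde I_k$ keeps the running total below $n+1$, set $I_k=\widetilde I_k$ and proceed to level $k+1$, while otherwise take $I_k\subseteq\widetilde I_k$ of exactly the size needed to bring the total to $n+1$, set $s=k$, and stop. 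For every $k<s$ one has $I_k=\widetilde I_k$, so by maximality each $\partial^\nu f$ with $|\nu|=k$ lies in $\operatorname{span}\mathcal{F}_k=\operatorname{span}\{f,\partial^\nu f:\nu\in\bigcup_{i\le k}I_i\}$, which is precisely the expressibility statement in the partition clause. The construction really does reach $n+1$ elements: if $\widetilde I_k=\emptyset$ at a stage where $\operatorname{span}\mathcal{F}_{k-1}\ne W$, then (using $\partial^{\nu+e_i}f=\partial_i\partial^\nu f$ and the fact that $\operatorname{span}\mathcal{F}_{k-1}$ then already contains all $\partial^\nu f$ with $|\nu|\le k$) one gets $\widetilde I_j=\emptyset$ for all $j\ge k$, hence $\operatorname{span}\mathcal{F}_{k-1}=W$, which by the dimension bound forces $\#\mathcal{F}_{k-1}=n+1$. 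Thus every level before termination contributes at least one index, termination occurs at some level $s\le n$ with exactly $n$ chosen indices $\nu_1,\dots,\nu_n$ (listed by increasing level), and a short count --- levels $1,\dots,l-1$ together contribute at least $l-1$ indices, so the $j$-th chosen index cannot sit at a level $l$ with $l-1\ge j$ --- gives $0<|\nu_j|\le j$. Linear independence of $f,\partial^{\nu_1}f,\dots,\partial^{\nu_n}f$ over $\mathbb{C}$ and the partition $\{\nu_1,\dots,\nu_n\}=\bigcup_{i=1}^sI_i$ with the required property then hold by construction.

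The step I expect to be the real obstacle is the dimension bound $\dim_{\mathbb{C}}W\ge n+1$: turning the combinatorial fact that the span of the derivatives is small into an honest $\mathbb{C}$-linear relation among the $f_j$ is exactly what needs the constant-connection / ODE-uniqueness argument above, and that argument is where connectedness of the domain is used in an essential way. The rest is bookkeeping, though the final count producing the sharp bound $|\nu_i|\le i$ should be carried out carefully.
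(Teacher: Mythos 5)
The paper offers no proof of this lemma at all --- it is quoted verbatim as \cite[Corollary 1.40]{HLY1} --- so the only question is whether your argument actually establishes the statement, and I believe it does not. The gap is in the meaning of ``linearly independent'' for the tuple $f,\partial^{\nu_1}f,\dots,\partial^{\nu_n}f$. Your entire construction (the space $W$, the greedy selection of $I_k$) works with the $\mathbb{C}$-span of the derivative vectors, and what you prove at the end is that the selected $n+1$ vectors are linearly independent \emph{over the constants}. But the way this lemma is used in the paper --- to feed Lemma 3.5 (Theorem 1.101 of \cite{HLY1}), whose conclusion involves $N(r,0;\mathbf{W})$ for the generalized Wronskian $\mathbf{W}=\det\big(\partial^{\nu_i}f_j\big)$, and to force $\mathbf{W}_1\equiv 0$ in the proof of Lemma 3.6 --- requires the selected tuple to be linearly independent over the field of meromorphic functions on $\mathbb{C}^m$, i.e.\ it requires $\det\big(\partial^{\nu_i}f_j\big)\not\equiv 0$. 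For $n+1$ vectors in $\mathcal{M}(\mathbb{C}^m)^{n+1}$ these two notions are genuinely different in several variables, and your algorithm can output indices for which the determinant vanishes identically. Concretely, take $m=2$, $n=1$, $f_0=z_1$, $f_1=z_1z_2$: these are linearly independent over $\mathbb{C}$, both $\partial_{z_1}f=(1,z_2)$ and $\partial_{z_2}f=(0,z_1)$ are independent of $f$ modulo $\mathbb{C}f$, so your level-one step is free to choose $I_1=\{(1,0)\}$; then $f,\partial_{z_1}f$ are indeed $\mathbb{C}$-independent, yet $\det\begin{pmatrix}z_1&z_1z_2\\ 1&z_2\end{pmatrix}\equiv 0$, so the conclusion the paper actually needs fails for your choice (the correct choice here is forced to be $\nu_1=(0,1)$).

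The part of your argument that is sound --- the constant-connection/ODE-uniqueness proof that $\dim_{\mathbb{C}}W\ge n+1$, which you correctly identify as the analytic crux --- answers the wrong question: a large $\mathbb{C}$-span of the derivative vectors does not imply that any $n+1$ of them with the prescribed level structure span $\mathcal{M}(\mathbb{C}^m)^{n+1}$ over $\mathcal{M}(\mathbb{C}^m)$. To repair the proof you would have to run the whole filtration argument with $\mathcal{M}(\mathbb{C}^m)$-spans $V_k=\operatorname{span}_{\mathcal{M}}\{\partial^\nu f:|\nu|\le k\}$ (noting via Leibniz that the chain $V_0\subseteq V_1\subseteq\cdots$ stabilizes once it stops growing), and the key step becomes showing that $V_\infty=\mathcal{M}(\mathbb{C}^m)^{n+1}$; there one must convert a \emph{meromorphic} relation $\sum_j\lambda_j\,\partial^\nu f_j=0$ (valid for all $\nu$) into a \emph{constant}-coefficient relation among $f_0,\dots,f_n$, which is a different and more delicate argument (normalizing one $\lambda_j$ to $1$ and differentiating the relations to show the remaining coefficients are constant) than the one you gave. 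The bookkeeping producing the bounds $0<|\nu_i|\le i$ and the partition clause would then go through essentially as you wrote it.
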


\begin{rem}\label{r3.1} By using Lemma 1.41 \cite{HLY1}, we make a remark on Lemma \ref{l1}. If $\#(I_s)=1$ in Lemma \ref{l1}, say, $\nu_n\in I_s$, then Lemma 1.41 \cite{HLY1} shows that we may choose $\nu_n$ such that $\nu_n=\nu+\imath_j$ for some $\nu\in I_{s-1}$, $j\in \mathbb{Z}[1, m]$, where $\imath_j=(0,\ldots,0,1,0,\ldots,0)\in\mathbb{Z}^m_{+}$ in which $1$ is $j$-th component of $\imath_j$.
\end{rem}

%The following result is known as second main theorem:
\begin{lem}\label{l3} \cite[Lemma 1.2]{HY1} Let $f:\mathbb{C}^m\to\mathbb{P}^1$ be a non-constant meromorphic function and let $a_1,a_2,\ldots,a_q$ be different points in $\mathbb{P}^1$. Then
	\[\parallel (q-2)T(r,f)\leq \sideset{}{_{j=1}^{q}}{\sum} \ol N(r,a_j;f)+O(\log (rT(r,f))).\]
\end{lem}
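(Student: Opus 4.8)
The plan is to follow Nevanlinna's classical argument for the Second Main Theorem, substituting the $\mathbb{C}^m$ logarithmic derivative estimate of Lemma~\ref{l2} (for first-order derivatives) for its one-variable counterpart. First I would normalise the target: since $T(r,\phi\circ f)=T(r,f)+O(1)$ for every automorphism $\phi$ of $\mathbb{P}^1$ and the terms $\ol N(r,a_j;f)$ merely get relabelled, I may compose $f$ with a suitable M\"obius transformation and assume that all the $a_j$ lie in $\mathbb{C}$. In that case I will in fact prove the sharper inequality $\parallel\;(q-1)T(r,f)\le \ol N(r,\infty;f)+\sum_{j=1}^q\ol N(r,a_j;f)+O(\log(rT(r,f)))$, from which the stated bound is immediate since $\ol N(r,\infty;f)\le T(r,f)+O(1)$.

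Next I would introduce the auxiliary function $F=\sum_{j=1}^q\frac{1}{f-a_j}$ and establish a lower bound for its proximity function. Put $3\delta=\min_{i\ne j}|a_i-a_j|>0$. On the sphere $\mathbb{C}^m\langle r\rangle$, at any given point at most one index $k$ can have $|f-a_k|<\delta$, and at such a point the triangle inequality gives $\log^+\frac{1}{|f-a_k|}\le\log^+|F|+O(1)$ while every other term $\log^+\frac{1}{|f-a_j|}$ is $O(1)$; integrating over the sphere yields $\sum_{j=1}^q m(r,a_j;f)\le m(r,F)+O(1)$. For the upper bound, fix a directional derivative $\partial=\partial_v$ with $\partial(f)\not\equiv 0$, write $F=\frac{1}{\partial f}\sum_{j=1}^q\frac{\partial(f-a_j)}{f-a_j}$, and apply Lemma~\ref{l2} to each quotient $\partial(f-a_j)/(f-a_j)$ (using $T(r,f-a_j)=T(r,f)+O(1)$) to get $\parallel\;m(r,F)\le m(r,1/\partial f)+O(\log(rT(r,f)))$. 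The First Main Theorem converts this to $m(r,1/\partial f)=T(r,\partial f)-N(r,0;\partial f)+O(1)$, and combining $m(r,\partial f)\le m(r,f)+m(r,\partial f/f)$ with the pole estimate $N(r,\partial f)\le N(r,f)+\ol N(r,f)$ and Lemma~\ref{l2} once more gives $\parallel\;T(r,\partial f)\le T(r,f)+\ol N(r,\infty;f)+O(\log(rT(r,f)))$.

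Putting these together with the First Main Theorem $m(r,a_j;f)=T(r,f)-N(r,a_j;f)+O(1)$ for each $j$, one arrives at $\parallel\;(q-1)T(r,f)\le\sum_{j=1}^q N(r,a_j;f)+\ol N(r,\infty;f)-N(r,0;\partial f)+O(\log(rT(r,f)))$. The remaining ingredient is the ramification inequality $N(r,0;\partial f)\ge\sum_{j=1}^q\big(N(r,a_j;f)-\ol N(r,a_j;f)\big)$; substituting it cancels each full $N(r,a_j;f)$ against its truncated counterpart and leaves exactly $\parallel\;(q-1)T(r,f)\le \ol N(r,\infty;f)+\sum_{j=1}^q\ol N(r,a_j;f)+O(\log(rT(r,f)))$, which is the normalised form, hence the lemma. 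Throughout, the exceptional set of finite measure flagged by $\parallel$ is the finite union of those arising from the finitely many applications of Lemma~\ref{l2}.

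I expect the genuine obstacle to be the ramification step in the several-variables setting. In dimension one the inequality $N(r,0;f')\ge\sum_j\big(N(r,a_j;f)-\ol N(r,a_j;f)\big)$ is elementary, but in $\mathbb{C}^m$ a single coordinate partial $\partial_j$ may vanish identically along a whole component of the analytic set $\operatorname{supp}\mu_f^{a_j}$, so the transversality must be arranged: one either replaces $\partial$ by a generic directional derivative $\partial_v$ that is transverse to all the relevant hypersurfaces outside an analytic set of codimension at least $2$, or works locally and chooses an appropriate partial component by component, and then one must check that these exceptional loci of codimension at least $2$ contribute nothing to the valence functions $N(r,\cdot\,)$. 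The comparison estimate for $m(r,F)$ and the pole bound for $\partial f$ rest on similar ``generic cancellation'' arguments, but these are routine once the transversality mechanism is in place.
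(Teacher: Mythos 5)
The paper does not prove this lemma at all: it is imported verbatim as \cite[Lemma 1.2]{HY1}, so your reconstruction is necessarily a ``different route'' in the sense that the paper offers none. What you have written is the classical Nevanlinna second-main-theorem argument transplanted to $\mathbb{C}^m$ --- normalisation by a M\"obius map, the auxiliary sum $F=\sum_j (f-a_j)^{-1}$ with the $3\delta$-separation estimate for the lower bound on $m(r,F)$, the logarithmic-derivative lemma (Lemma~\ref{l2}, applied to a directional derivative $\partial_v f\not\equiv 0$, which exists since $f$ is non-constant) for the upper bound, and the ramification term $N(r,0;\partial_v f)$ to truncate the counting functions --- and each step survives the passage to several variables: the $3\delta$ estimate is pointwise and integrates over $\mathbb{C}^m\langle r\rangle$ unchanged, and the pole bound $N(r,\partial_v f)\le N(r,f)+\ol N(r,f)$ is exactly what the paper itself establishes in Step~3 of the proof of Theorem~\ref{t2.2}. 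One remark on your final paragraph: the transversality concern you raise for the ramification inequality is not actually an obstacle in the direction you need. Writing $f-a_j=\varphi_1^{\mu}\hat h$ in local coordinates adapted to a regular point of $\operatorname{supp}\mu_f^{a_j}$ (off the codimension-$\ge 2$ singular and indeterminacy sets, which contribute nothing to the valence functions), one gets $\partial_v f=\mu\varphi_1^{\mu-1}\hat h\,\partial_v\varphi_1+\varphi_1^{\mu}\partial_v\hat h$, so $\partial_v f$ vanishes to order \emph{at least} $\mu-1$ along the hypersurface whether or not $\partial_v\varphi_1$ vanishes there; extra vanishing only strengthens the lower bound $N(r,0;\partial_v f)\ge\sum_j\bigl(N(r,a_j;f)-\ol N(r,a_j;f)\bigr)$, and the supports for distinct $a_j$ are disjoint outside codimension two, so the contributions add. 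With that observation your argument closes, and it yields an error term $O(\log^+T(r,f))$ outside a set of finite measure, which is even slightly sharper than the $O(\log(rT(r,f)))$ claimed in the statement.
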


\begin{lem}\label{l4} \cite[Theorem 1.26]{HLY1} Let $f:\mathbb{C}^m\to\mathbb{P}^1$ be a non-constant meromorphic function. If
	$R(z, w)=\frac{A(z, w)}{B(z, w)}$, then
	\[\parallel\;T(r, R_f)=\max \{p, q\} T(r, f)+O\big(\sideset{}{_{j=0}^{p}}{\sum}T(r, a_j)+\sideset{}{_{j=0}^{q}}{\sum}T(r, b_j)\big),\]
	where $R_f(z)=R(z, f(z))$ and two coprime polynomials $A(z, w)$ and $B(z,w)$ are given
	respectively $A(z,w)=\sum_{j=0}^p a_j(z)w^j$ and $B(z,w)=\sum_{j=0}^q b_j(z)w^j$.
\end{lem}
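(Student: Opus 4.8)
The assertion is the several complex variables form of the Valiron--Mokhon'ko lemma, so I would prove it by bounding $T(r,R_f)$ from above and below by $d\,T(r,f)$ with admissible error, where $d:=\max\{p,q\}$ and where I write $E(r)$ for any quantity of the form $O\big(\sum_j T(r,a_j)+\sum_j T(r,b_j)\big)$ (note $E(r)$ absorbs $O(1)$). Writing $A_f=\sum_{j=0}^p a_jf^j$ and $B_f=\sum_{j=0}^q b_jf^j$, the coefficient bookkeeping is routine: elementary estimates on $\log^+$ of a polynomial in $f$, together with the location of poles, give $m(r,A_f)\le p\,m(r,f)+E(r)$, $N(r,A_f)\le p\,N(r,f)+E(r)$, and likewise for $B_f$. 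So the real content is accounting for the ``degree'' of the quotient $R_f=A_f/B_f$, and it is the coprimality of $A(z,\cdot)$ and $B(z,\cdot)$ that keeps this degree equal to $\max\{p,q\}$, not $p+q$.

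For the upper bound I would first replace $R$ by $1/R$ when $p>q$ and by $R-c_0$ when $p=q$, with $c_0$ the ratio of the leading coefficients; both moves change $T(r,R_f)$ only by $E(r)$, and after them $R_f$ is a \emph{proper} rational function of $f$ vanishing at $w=\infty$, whose denominator has degree exactly $d$ in $f$ and is coprime to its numerator. Generically in $z$, the zeros of that denominator are finitely many points $w_1,\dots,w_k\in\mathbb{C}$, of multiplicities $e_1,\dots,e_k$ with $\sum_i e_i=d$, at which the numerator does not vanish; hence $\log^+|R_f|\le\sum_i e_i\log^+\frac{1}{|f-w_i|}+E(r)$ pointwise, while by properness the poles of $R_f$ sit only over the $w_i$ and never over the poles of $f$. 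Consequently $m(r,R_f)\le\sum_i e_i\,m(r,w_i;f)+E(r)$ and $N(r,R_f)\le\sum_i e_i\,N(r,w_i;f)+E(r)$, and adding these and invoking the First Main Theorem in the form $m(r,w_i;f)+N(r,w_i;f)=T(r,f)+O(1)$ yields $\parallel\; T(r,R_f)\le\big(\sum_i e_i\big)T(r,f)+E(r)=d\,T(r,f)+E(r)$.

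For the lower bound, for generic $a\in\mathbb{C}$ the equation $R(z,w)=a$ has, generically in $z$, $d$ distinct solutions $w_1(a),\dots,w_d(a)$, each simple, so that $N(r,a;R_f)=\sum_{i=1}^d N(r,w_i(a);f)\ge\sum_{i=1}^d\overline N(r,w_i(a);f)$. Picking $k$ such values $a_1,\dots,a_k$ whose $kd$ solutions are pairwise distinct and applying the several variables Second Main Theorem (Lemma~\ref{l3}) to $f$ against these $kd$ points, together with $N(r,a_l;R_f)\le T(r,R_f)+O(1)$, gives $\parallel\; k\,T(r,R_f)\ge(kd-2)\,T(r,f)+O(\log(rT(r,f)))$; letting $k$ be arbitrarily large and combining with the upper bound forces $\parallel\; T(r,R_f)=d\,T(r,f)+E(r)$, the claim.

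The step I expect to need the most care is the coprimality-and-genericity bookkeeping behind the upper bound: one must verify that forming $A(z,f)$, $B(z,f)$ and $A(z,f)-aB(z,f)$ introduces no cancellation touching the leading terms --- the only possible cancellations lie over the lower-dimensional zero set of the resultant $\operatorname{Res}_w(A,B)$ and over the zero sets of the leading coefficients, and one checks these contribute nothing to the principal parts of $m(r,R_f)$, $N(r,R_f)$ and of the divisor identity $N(r,a;R_f)=\sum_i N(r,w_i(a);f)$. Granting this, and the usual ``off an analytic subset of smaller dimension'' conventions for divisor-theoretic identities, the whole argument reduces to the classical one-variable computation, with the several variables Nevanlinna machinery recalled in the preliminaries (counting functions defined by integration over spheres, the First and Second Main Theorems, and the finite-measure exceptional set carried by ``$\parallel$'') in place of its classical analogue.
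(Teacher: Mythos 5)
The paper contains no proof of this lemma to compare against: it is quoted verbatim from \cite[Theorem 1.26]{HLY1} (the several-variables Valiron--Mokhon'ko theorem) and used as a black box. Your proposal therefore has to stand on its own, and it has two genuine gaps.

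First, in the upper bound you factor the denominator of the properified $R$ over its roots $w_1(z),\dots,w_k(z)$ and then invoke $m(r,w_i;f)$, $N(r,w_i;f)$ and the First Main Theorem for each root separately. But the $w_i$ are roots of a polynomial whose coefficients are meromorphic functions of $z$: they are multivalued algebraic functions, not meromorphic functions on $\mathbb{C}^m$, so the individual quantities $m(r,w_i;f)$ and $N(r,w_i;f)$ are not defined and a root-by-root First Main Theorem is not available. Only symmetric combinations of the $w_i$ are single-valued, which is exactly why the classical Mokhon'ko argument never names the roots: it proceeds by induction on $\max\{p,q\}$, first for polynomials $A(z,f)$ and then for the quotient via the division algorithm, using coprimality through the resultant $\operatorname{Res}_w(A,B)$ to show that $N(r,0;B_f)$ accounts for the poles of $R_f$ up to the coefficient error $E(r)$. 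Second, and more decisively, your lower bound cannot produce the asserted error term. Applying the Second Main Theorem to $kd$ targets gives, for each fixed $k$, $T(r,R_f)\ge\big(d-\tfrac{2}{k}\big)T(r,f)+O(\log(rT(r,f)))$; the deficit $\tfrac{2}{k}T(r,f)$ is not $O\big(\sum_jT(r,a_j)+\sum_jT(r,b_j)\big)$ (which is $O(1)$ when the coefficients are constants), the remainder $O(\log(rT(r,f)))$ is likewise not of that form, and ``letting $k$ be arbitrarily large'' is illegitimate because each application of the Second Main Theorem carries its own finite-measure exceptional set and a countable union of such sets need not have finite measure. At best this route yields $T(r,R_f)=d\,T(r,f)+o(T(r,f))$ outside an uncontrolled family of exceptional sets, which is strictly weaker than the stated equality; the solutions $w_i(a)$ of $R(z,w)=a$ used there suffer the same multivaluedness objection. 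The repair is to drop the Second Main Theorem entirely and run Mokhon'ko's algebraic induction, which gives matching upper and lower bounds with error $E(r)$ throughout.
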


\begin{lem}\label{l5} \label{l6}\cite[Theorem 1.101]{HLY1} Suppose that $f_1, f_2,\ldots, f_n$ are linearly independent meromorphic
	functions in $\mathbb{C}^m$ such that $\sum_{i=1}^nf_i=1$. Then for $1\leq j\leq n$ and $R>\rho>r>r_0$, 
	\beas \parallel\;T(r,f_j)&\leq & N(r,f_j)+\sideset{}{_{j=1}^{n}}{\sum}\left\lbrace  N(r,0;f_j)-N(r,f_j)\right\rbrace+N(r,\mathbf{W})\\
	&&-N(r,0;\mathbf{W})+l_1 \log\left\lbrace (\rho/r)^{2m-1}T(R)/(\rho-r)\right\rbrace+O(1),\eeas
	where $T(R)=\max\limits_{1\leq j\leq n} \{T(r,f_j)\}$ and $\mathbf{W}=\mathbf{W}_{\nu_1\ldots\nu_{n-1}}(f_1,f_2,\ldots,f_n)\not\equiv 0$ is the Wronskian determinant and $n-1\leq l_1=|\nu_1|+\ldots+|\nu_{n-1}|\leq n(n-1)/2$.
\end{lem}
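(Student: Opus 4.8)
The plan is to prove this as a Second-Main-Theorem-type estimate by marrying the Wronskian machinery of Lemma~\ref{l1} with the logarithmic-derivative estimate of Lemma~\ref{l2}. Since $f_1,\ldots,f_n$ are linearly independent, Lemma~\ref{l1} supplies multi-indices $\nu_1,\ldots,\nu_{n-1}$ with $0<|\nu_k|\le k$ such that the generalized Wronskian
\[
\mathbf{W}=\det\big[\partial^{\nu_k}f_i\big]_{0\le k\le n-1,\;1\le i\le n}\not\equiv 0,
\]
where $\nu_0=0$ (so $\partial^{\nu_0}f_i=f_i$), and with $l_1=\sum_{k=1}^{n-1}|\nu_k|\in[n-1,\,n(n-1)/2]$. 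The first step is to record the structural consequence of $\sum_i f_i=1$: since $|\nu_k|\ge 1$, applying $\partial^{\nu_k}$ kills the constant, giving $\sum_{i=1}^n\partial^{\nu_k}f_i=0$ for every $k$. Hence the column operation that adds all columns of $\mathbf{W}$ into the $j$-th column replaces that column by $(1,0,\ldots,0)^{T}$ without changing the determinant; expanding along it yields the identity $\mathbf{W}=(-1)^{1+j}W_j$, where $W_j=\det[\partial^{\nu_k}f_i]_{1\le k\le n-1,\;i\ne j}$ is the minor obtained by deleting the top row and the $j$-th column.

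Next I would extract a logarithmic-derivative representation of each $f_j$. Factoring $f_i$ out of the $i$-th column gives $\mathbf{W}=\big(\prod_{i=1}^n f_i\big)D$, where $D$ is the determinant of the matrix whose top row is $(1,\ldots,1)$ and whose $(k{+}1)$-th row is $(\partial^{\nu_k}f_1/f_1,\ldots,\partial^{\nu_k}f_n/f_n)$; likewise $W_j=\big(\prod_{i\ne j}f_i\big)E_j$, where $E_j$ is the $(n{-}1)\times(n{-}1)$ determinant of the entries $\partial^{\nu_k}f_i/f_i$ with $i\ne j$. Substituting into $\mathbf{W}=(-1)^{1+j}W_j$ and cancelling the common factor $\prod_{i\ne j}f_i$ (generically, over its support) produces the clean formula
\[
f_j=(-1)^{1+j}\,\frac{E_j}{D}.
\]
Because every entry of $E_j$ and of $D$ is a logarithmic derivative $\partial^{\nu_k}f_i/f_i$, expanding the determinants as signed sums of products and applying Lemma~\ref{l2} termwise shows that both $m(r,E_j)$ and $m(r,D)$ are absorbed into the error term, the total differentiation order summing to exactly $l_1$; this is the source of the factor $l_1\log\{(\rho/r)^{2m-1}T(R)/(\rho-r)\}$.

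It then remains to convert the proximity estimate into the stated valence inequality. From $m(r,f_j)\le m(r,E_j)+m(r,1/D)$ and the First Main Theorem, $m(r,1/D)=m(r,D)+N(r,D)-N(r,0;D)+O(1)$, so $\parallel m(r,f_j)\le N(r,D)-N(r,0;D)+(\text{error})$. Adding $N(r,f_j)$ to both sides realizes $T(r,f_j)$ on the left. To match the right-hand side as written, I would use the product identity of Lemma~\ref{l7a} twice: first $\sum_{i=1}^n\{N(r,0;f_i)-N(r,f_i)\}=N(r,0;\prod_i f_i)-N(r,\prod_i f_i)$, and second, from $\mathbf{W}=(\prod_i f_i)D$, that $N(r,\mathbf{W})-N(r,0;\mathbf{W})=-\{N(r,0;\prod_i f_i)-N(r,\prod_i f_i)\}-\{N(r,0;D)-N(r,D)\}$. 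Substituting both into the bracketed combination on the right-hand side of the lemma collapses it exactly to $N(r,D)-N(r,0;D)$, precisely the bound already obtained for $m(r,f_j)$, which closes the argument.

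The main obstacle I anticipate is not the determinant algebra but the sharp bookkeeping of the error term in several variables. Lemma~\ref{l2} as stated only controls $m(r,\partial^I f/f)$ outside an exceptional set and in terms of $\log^+ T(r,f)$, whereas the present lemma demands the explicit three-radii form with the $(\rho/r)^{2m-1}/(\rho-r)$ factor; producing that requires the calibrated (Borel/calculus-lemma) version of the $\mathbb{C}^m$ logarithmic-derivative estimate together with a careful count ensuring the accumulated order of differentiation across all determinant terms is governed by $l_1$ rather than by the ambient dimension $m$. A secondary technical point is that the cancellation of $\prod_{i\ne j}f_i$ and the product identities of Lemma~\ref{l7a} hold only \emph{generically}, that is, away from an analytic set of dimension $\le m-2$; this is harmless for the counting functions but must be carried through with the $\parallel$ convention at every step.
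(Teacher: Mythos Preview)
The paper does not prove this lemma at all: it is quoted verbatim from \cite[Theorem~1.101]{HLY1} and used as a black box in the proof of Lemma~\ref{l6}. So there is no ``paper's own proof'' to compare against.

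That said, your sketch is the standard Nevanlinna--Cartan Wronskian argument (the several-variable analogue of Nevanlinna's proof of the Second Main Theorem), and it is structurally sound. The determinant identity $\mathbf{W}=(-1)^{1+j}W_j$ coming from $\sum_i f_i=1$ is correct, as is the factorization $f_jD=\pm E_j$ and the reduction, via Lemma~\ref{l7a}, of the bracketed counting-function combination to $N(r,D)-N(r,0;D)$. The one point I would flag is exactly the one you already isolate: with your route through $m(r,f_j)\le m(r,E_j)+m(r,1/D)$ the logarithmic-derivative error is charged \emph{twice} (once for $E_j$, once for $D$), so a naive count yields a constant like $2l_1$ rather than the stated $l_1$. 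Obtaining the precise coefficient $l_1$, and the explicit three-radii shape $\log\{(\rho/r)^{2m-1}T(R)/(\rho-r)\}$, requires the calibrated $\mathbb{C}^m$ logarithmic-derivative lemma (the one that feeds Lemma~\ref{l2} before the Borel step) together with the observation that in each monomial of the determinant expansion the total order of differentiation is exactly $l_1$; the sum over permutations contributes only an $O(1)$ additive constant. For the applications in this paper the exact constant is irrelevant---only the $o(T(r))$ nature of the error is ever used---so your sketch is adequate for those purposes.
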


\begin{lem}\label{l6} Let $f:\mathbb{C}^m\to \mathbb{P}^1$ and $g:\mathbb{C}^m\to\mathbb{P}^1$ be two non-constant meromorphic functions. If $f$ and $g$ share $1$ CM, then one of the following three cases holds:
	\begin{enumerate}
		\item[(i)] $\parallel\; T(r,f)\leq N_2(r,f)+N_2(r,g)+N_2(r,0;f)+N_2(r,0;g)+o(T(r))$,\\
		where $T(r)=\max\{T(r,f),T(r,g)\}$ and the same inequality holding for $T(r,g)$,
		
		\smallskip
		\item[(ii)] $f\equiv g$,
		
		\smallskip
		\item[(iii)] $fg\equiv 1$.
	\end{enumerate}
\end{lem}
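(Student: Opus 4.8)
The plan is to carry over the classical one‑variable argument with the ordinary derivative replaced by a fixed directional derivative. Since $f$ is non‑constant there is $v\in\mathbb{C}^m$ with $\partial_v f\not\equiv 0$, and since the set of directions annihilating $g$ is a proper linear subspace we may choose $v$ so that $\partial_v f\not\equiv 0$ and $\partial_v g\not\equiv 0$ simultaneously. Write $f':=\partial_v f$, $f'':=\partial_v^2 f$, and similarly for $g$; throughout we use $m(r,\partial_v\phi/\phi)=S(r,\phi)$, a consequence of Lemma \ref{l2} applied to each $\partial_j$ with $|I|=1$ (and $S(r,\phi)=o(T(r,\phi))$ off a set of finite measure). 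Introduce
\[
H=\left(\frac{f''}{f'}-\frac{2f'}{f-1}\right)-\left(\frac{g''}{g'}-\frac{2g'}{g-1}\right)=\partial_v\!\left(\log\frac{f'(g-1)^2}{(f-1)^2 g'}\right),
\]
and split into the cases $H\not\equiv 0$ and $H\equiv 0$.

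If $H\not\equiv 0$: first $m(r,H)=S(r):=S(r,f)+S(r,g)$. A local computation in coordinates adapted to the relevant divisors (exactly as in one variable, using the local factorizations as in Step~1 of the proof of Theorem \ref{t2.2}) shows that $H$ has only simple poles, located among the zeros of $f$ and $g$ of multiplicity $\ge 2$, the poles of $f$ and $g$ of multiplicity $\ge 2$, and the zeros of $f'$, $g'$ that do not lie over $\{0,1,\infty\}$; crucially $H$ is holomorphic at the common multiple $1$‑points (the principal parts cancel because of CM sharing) and vanishes at the common simple $1$‑points. Hence $\overline N_{1)}(r,1;f)\le N(r,0;H)\le T(r,H)+O(1)=N(r,H)+S(r)$, with $N(r,H)$ dominated by the truncated counting functions just listed. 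Feeding this into the Second Main Theorem (Lemma \ref{l3}, retaining the ramification contribution of $f'$) applied to $f$ with respect to $0,1,\infty$, and doing the same for $g$, a routine manipulation of truncated counting functions gives $T(r,f)\le N_2(r,f)+N_2(r,g)+N_2(r,0;f)+N_2(r,0;g)+S(r)$ and the analogous bound for $T(r,g)$, i.e.\ case (i).

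If $H\equiv 0$: the inner function above is $\partial_v$‑closed, so $\dfrac{f'(g-1)^2}{(f-1)^2 g'}=\rho$ with $\partial_v\rho\equiv 0$; integrating along the lines in the direction $v$ (on which $\partial_v$ acts as an ordinary derivative) yields $f'/(f-1)^2=\rho\, g'/(g-1)^2$ and then $\dfrac{1}{f-1}=\dfrac{\rho}{g-1}+\lambda$ with $\partial_v\lambda\equiv 0$, equivalently $f=\dfrac{(\lambda+1)g+(\rho-\lambda-1)}{\lambda g+(\rho-\lambda)}$. The hypothesis $\partial_v f\not\equiv 0$ now does much of the work: if this relation forced $f-1$ to equal a quotient of $\partial_v$‑closed functions we would get $\partial_v f\equiv 0$, so one reads off that $f\equiv g$ exactly when $(\rho,\lambda)\equiv(1,0)$ (case (ii)) and $fg\equiv 1$ exactly when $(\rho,\lambda)\equiv(-1,-1)$ (case (iii)). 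In every remaining configuration one returns to case (i) by treating the displayed identity as a Möbius transform of $g$ with $\partial_v$‑closed coefficients: Lemma \ref{l4} controls $T(r,f)$ in terms of $T(r,g)$ and the coefficients, $f$ then omits (or shares with $g$) a further value whose reduced counting function is dominated by $N_2$‑type terms, and the Second Main Theorem closes the estimate. Combining the two cases proves the lemma.

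The step I expect to be the main obstacle is the $H\equiv 0$ branch, precisely because in $\mathbb{C}^m$ the integration data $\rho$ and $\lambda$ are a priori only constant along the chosen direction $v$ — an identity $\partial_v\varphi\equiv 0$ does not make a meromorphic $\varphi$ constant when $m\ge 2$ — so the residual Möbius case must be handled with genuinely non‑constant coefficients; keeping that bookkeeping honest, and (in the $H\not\equiv 0$ branch) getting the truncated‑multiplicity accounting of the poles and zeros of $H$ exactly right, is where the real care lies.
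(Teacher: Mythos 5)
Your argument diverges from the paper's (which never introduces the second\--order logarithmic\--derivative function $H$), and the divergence is where the gap lies: the branch $H\equiv 0$ is not actually closed. From $H\equiv 0$ you obtain only $\frac{1}{f-1}=\frac{\rho}{g-1}+\lambda$ with $\partial_v\rho\equiv\partial_v\lambda\equiv 0$, and for $m\ge 2$ such $\rho,\lambda$ are genuinely non\--constant meromorphic functions whose characteristics you never control; a priori $T(r,\rho)$ can be comparable to $T(r,f)+T(r,g)$, since $\rho=f'(g-1)^2/((f-1)^2g')$. Your proposed return to case (i) "in every remaining configuration" treats the identity as a M\"obius transform and invokes Lemma \ref{l4} plus the Second Main Theorem, but Lemma \ref{l4}'s error term $O(\sum T(r,a_j)+\sum T(r,b_j))$ is then not $o(T(r))$, and Lemma \ref{l3} is a fixed\--target theorem that does not apply to the moving values $A/C$, $B/D$ produced by non\--constant coefficients. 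This is exactly the trap the paper's proof is built to avoid: it dichotomizes on linear (in)dependence \emph{over $\mathbb{C}$} of $F_1=f$, $F_2=(f-1)/(g-1)$, $F_3=-gF_2$ with $F_1+F_2+F_3=1$, so the degenerate branch automatically yields $g=(Af+B)/(Cf+D)$ with honest complex constants, while the nondegenerate branch is handled by the Borel\--type Lemma \ref{l5} together with a pointwise divisor estimate for the generalized Wronskian. To rescue your route you would have to either prove $T(r,\rho)+T(r,\lambda)=o(T(r))$ or replace the $\partial_v$\--integration by a linear\--algebra step over $\mathbb{C}$.

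There is also a secondary problem in the $H\not\equiv 0$ branch. You need the Second Main Theorem "retaining the ramification contribution of $f'$" so that the terms $N_0(r,0;f')+N_0(r,0;g')$ occurring in $N(r,H)$ (zeros of $f'$, $g'$ away from $0,1,\infty$) are cancelled; without that cancellation these terms are not dominated by the $N_2$ quantities in (i). But Lemma \ref{l3} as stated carries no ramification term, so the refined directional\--derivative SMT in $\mathbb{C}^m$ that your argument silently uses would have to be stated and justified. Likewise the claim that $H$ has only simple poles must be proved generically in adapted local coordinates, with attention to divisor components invariant under the direction $v$ (where $\partial_v\varphi_1$ vanishes along the divisor). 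These points are repairable, but together with the unresolved $H\equiv 0$ branch the proposal does not yet constitute a proof of the lemma.
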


\begin{proof} We consider following two cases.
	
	\smallskip
	{\bf Case 1.} Let $f-1$ and $g-1$ be linearly dependent. Since $f$ and $g$ are non-constant, there exist $c_1\neq 0$ and $c_2\neq 0$ such that $c_1(f-1)+c_2(g-1)=0$. Then
	\bea\label{rbb1} g-1=d(f-1),\eea
	where $d=-c_1/c_2$. If $d=1$, then $f\equiv g$ and so we get $(ii)$. Next let $d\neq 1$. Then (\ref{rbb1}) gives 
	$\parallel \ol N(r,0;g)=\ol N(r,(d-1)/d;f)$ and so by Lemma \ref{l3}, we get
	\beas \parallel T(r,f)&\leq& \ol N(r,0;f)+\ol N(r,f)+\ol N(r,(d-1)/d;f)+o(T(r,f))\\&\leq&
	N_2(r,f)+N_2(r,g)+N_2(r,0;f)+N_2(r,0;g)+o(T(r,f)).
	\eeas
	
	Therefore we obtain $(i)$.

	\smallskip
	{\bf Case 2.} Let $f-1$ and $g-1$ be linearly independent. Suppose 
	\bea\label{rbb2} F_1=f,\;\;F_2=(f-1)/(g-1)\;\;\text{and}\;\;F_3=-gF_2.\eea
	
	Now we consider following two sub-cases.
	
	\smallskip
	{\bf Sub-case 2.1.} Let $F_1$, $F_2$ and $F_3$ be linearly independent.
	Clearly (\ref{rbb2}) gives $F_1+F_2+F_3=1$
	and so by Lemma \ref{l5}, we get 
	\bea\label{rbb3} \parallel\;T(r,F)&\leq &N(r,F_1) + \sideset{}{_{k=1}^{3}}{\sum} \Big\lbrace N(r,0;F_k)-N(r,F_k)\Big\rbrace+ N(r,\mathbf{W})\nonumber\\&&
	- N(r,0;\mathbf{W})+l_1\log \left\lbrace (\rho/r)^{2m-1}T_1(R)/(\rho-r)\right\rbrace+ O(1),\eea
	where $T_1(r)=\max\{T(r,F_1),T(r,F_2),T(r,F_3)\}$, $2\leq l=|\nu_1|+|\nu_2|\leq 3$ 
	and
	\beas \mathbf{W}= \begin{vmatrix} F_1&F_2&F_3\\
		\partial^{\nu_1}F_1&\partial^{\nu_1}F_2&\partial^{\nu_1}F_3\\
		\partial^{\nu_2}F_1&\partial^{\nu_2}F_2&\partial^{\nu_2}F_3
	\end{vmatrix}.
	\eeas
	
	Since $F_1+F_2+F_3=1$, we have
	\bea\label{rbb4} \mathbf{W}=\partial^{\nu_1}F_1 \partial^{\nu_2}F_2 - \partial^{\nu_1}F_2 \partial^{\nu_2}F_1,\eea 
	\bea\label{rbb5} \mathbf{W}=\partial^{\nu_1}F_3 \partial^{\nu_2}F_1 - \partial^{\nu_1}F_1 \partial^{\nu_2}F_3. \eea 
	%\bea\label{rbb6} \mathbf{W}=\partial^{\nu_1}F_2 \partial^{\nu_2}F_3 - \partial^{\nu_1}F_3 \partial^{\nu_2}F_2.\eea
	
	\smallskip
	We now prove the following inequality
	\bea\label{rbb8}\mu:=\sideset{}{_{k=1}^{3}}{\sum}\mu_{F_k}^0-\mu_{F_2}^{\infty}-\mu_{F_3}^\infty+\mu_{\mathbf{W}}^\infty-\mu_{\mathbf{W}}^0
	\leq \mu_{f,2}^0+\mu_{g,2}^0+\mu_{f,2}^\infty+\mu_{g,2}^\infty:=\nu.\eea
	
	\smallskip
	Note that generically, we have 
	\bea\label{rbb8a}\mu_{F_2}^0=\max\{\mu_{g}^{\infty}-\mu^{\infty}_f,0\}\;\text{and}\;\mu_{F_2}^{\infty}=\max\{\mu_{f}^{\infty}-\mu^{\infty}_g,0\}.\eea
	
	%Hence generically, 
	%\[\mu_{F,G,k}^1=\mu_{h,k}^0,\;\;\mu_{G,F,k}^1=\mu_{h,k}^\infty.\]
	Define 
	\[I=I_f\cup I_g\cup I_{F_2}\cup I_{F_3}\;\;\text{and}\;\;
	S=\sideset{}{_{a\in\{0,1,\infty\}}}{\bigcup}(\operatorname{supp}\mu_f^a)_s\cup(\operatorname{supp}\mu_g^a)_s,\]
	where $A_s$ denotes the set of singular points of the analytic set $A$. It suffices to prove (\ref{rbb8}) 
	on $\mathbb{C}^m-(I\cup S)$, since $\dim (I\cup S)\leq m-2$. Take $z_0\in \mathbb{C}^m-(I\cup S)$.
	
	\smallskip
	First we assume $\mu_f^1(z_0)>0$. Since $\operatorname{supp} \mu_f^1=\operatorname{supp}\mu_g^1$, we have $\mu_g^1(z_0)>0$.
	By the given condition we get $\mu^1_f(z_0)=\mu^1_g(z_0)$. Clearly $\mu(z_0)\leq 0$ and so the inequality (\ref{rbb8}) holds.
	
	\smallskip
	Next we assume $\mu_f^1(z_0)=0$. Then $\mu_g^1(z_0)=0$. 
	Now we will prove the inequality (\ref{rbb8}) by distinguishing following four sub-cases.

	\smallskip
	{\bf Sub-case 2.1.1.} Let $\mu_f^\infty(z_0)=0$ and $\mu_g^\infty(z_0)=0$. In this case from (\ref{rbb8a}), we have \[\mu(z_0)=\mu_f^0(z_0)+\mu_g^0(z_0)-\mu_{\mathbf{W}}^0(z_0).\]
	
	Also from (\ref{rbb5}), we find
	\beas \quad \mu_{\mathbf{W}}^0(z_0)\geq \mu_{F_1}^0(z_0)-\mu_{F_1,2}^0(z_0)+\mu_{F_3}^0(z_0)-\mu_{F_3,2}^0(z_0)
	=\mu_f^0(z_0)-\mu_{f,2}^0(z_0)+\mu_g^0(z_0)-\mu_{g,2}^0(z_0).\eeas
	
	Therefore $\mu(z_0)\leq \mu_{f,2}^0(z_0)+\mu_{g,2}^0(z_0)=\nu(z_0)$
	and so inequality (\ref{rbb8}) holds.

	\smallskip
	{\bf Sub-case 2.1.2.} Let $\mu_f^{\infty}\left(z_0\right)=0$ and $\mu_g^{\infty}\left(z_0\right)>0$.
	In this case from (\ref{rbb8a}), we have 
	\[\mu\left(z_0\right)=\mu_f^0\left(z_0\right)+\mu_g^{\infty}\left(z_0\right)-\mu_{\mathbf{W}}^0\left(z_0\right).\]
	
	Now from (\ref{rbb4}), we get
	\beas \quad\mu_{\mathbf{W}}^0(z_0)\geq \mu_{F_1}^0(z_0)-\mu_{F_1, 2}^0(z_0)+\mu_{F_2}^0(z_0)-\mu_{F_2, 2}^0(z_0)
	=\mu_f^0(z_0)-\mu_{f, 2}^0(z_0)+\mu_g^{\infty}(z_0)-\mu_{g, 2}^{\infty}(z_0).\eeas
	
	Consequently 
	\[\mu(z_0) \leq \mu_{f, 2}^0(z_0)+\mu_{g, 2}^{\infty}(z_0)=\nu(z_0)\]
	and so the inequality (\ref{rbb8}) holds.
	
	\smallskip
	{\bf Sub-case 2.1.3.} Let $\mu_f^{\infty}\left(z_0\right)>0$ and $\mu_g^{\infty}\left(z_0\right)=0$. Clearly
	\bea\label{rbb9} \quad\mu(z_0)=\mu_{F_3}^0(z_0)-\mu_{F_2}^{\infty}(z_0)-\mu_{F_3}^{\infty}(z_0)+\mu_{\mathbf{W}}^{\infty}(z_0)-\mu_{\mathbf{W}}^0(z_0).\eea
	
	Let $l=\mu_f^{\infty}(z_0)$ and $k=\mu_f^{\infty}(z_0)-\mu_g^0(z_0)$.
	
	If $k=0$, i.e., if $\mu_g^0(z_0)=\mu_f^{\infty}(z_0)>0$, then from (\ref{rbb5}), we have $\mu_{\mathbf{W}}^{\infty}(z_0) \leq \mu_{F_1}^{\infty}(z_0)+2$ and so by (\ref{rbb9}), we get
	\[\quad\mu(z_0)\leq-\mu_{F_2}^{\infty}(z_0)+\mu_{\mathbf{W}}^{\infty}(z_0)\leq \mu_{F_1}^{\infty}(z_0)-\mu_{F_2}^{\infty}(z_0)+2=2\leq \nu(z_0).\]
	
	Consequently the inequality (\ref{rbb8}) holds.
	
	\smallskip
	Next we suppose $k\neq 0$. Since $2\leq |\nu_1|+|\nu_2|\leq 3$, for the sake of simplicity we may assume that $\partial^{\nu_1}=\partial_{z_i}$ for some $i\in\mathbb{Z}[1,m]$.
	Since $z_0 \notin S$, there is a holomorphic coordinate system $\left(U ; \varphi_1, \ldots, \varphi_m\right)$ of $z_0$ in $\mathbb{C}^m-(I \cup S)$ such that $U \cap \operatorname{supp} \mu_f^{\infty}=\left\{z \in U \mid \varphi_1(z)=0\right\}$ and $\varphi_j(z_0)=0$ for $j=1,2,\ldots,m$.
	So biholomorphic coordinate transformation $z_j=z_j(\varphi_1, \ldots, \varphi_m)$
	near $0$ exists such that $z_0=z(0)=(z_1(0), \ldots, z_m(0))$ such that
	\bea\label{rbb10}
	F_1=\varphi_1^{-l} \hat{F}_1\left(\varphi_1, \ldots, \varphi_m\right),\;\;F_2=\varphi_1^{-l} \hat{F}_2\left(\varphi_1, \ldots, \varphi_m\right)\;\text{and}\;
	F_3=\varphi_1^{-k} \hat{F}_3\left(\varphi_1, \ldots, \varphi_m\right),
	\eea
	where $\hat{F}_i\;(i=1,2,3)$ are holomorphic functions near $0$, which do not vanish along the set $\operatorname{supp} \mu_f^{\infty}$. 
	Now we consider following two sub-cases.

	\smallskip
	{\bf Sub-case 2.1.3.1.} Let $\left|\nu_2\right|=1$. Suppose $\partial^{\nu_2}=\partial_{z_j}$ for some $j \in \mathbb{Z}[1, m]-\{i\}$. Now using (\ref{rbb10}) to (\ref{rbb5}), we get 
	\bea\label{rbb11}
	\mathbf{W}&=& \big(-k \varphi_1^{-k-1} \hat{F}_3 \partial_{z_i} \varphi_1+\varphi_1^{-k} \partial_{z_i} \hat{F}_3\big)\big(-l \varphi_1^{-l-1} \hat{F}_1 \partial_{z_j} \varphi_1+\varphi_1^{-l} \partial_{z_j} \hat{F}_1\big) \\
	&& -\big(-l \varphi_1^{-l-1} \hat{F}_1 \partial_{z_i} \varphi_1+\varphi_1^{-l} \partial_{z_i} \hat{F}_1\big)\big(-k \varphi_1^{-k-1} \hat{F}_3 \partial_{z_j} \varphi_1+\varphi_1^{-k} \partial_{z_j} \hat{F}_3\big)\nonumber \\
	&=& \varphi_1^{-l-k-1}\big(l \hat{F}_1 \partial_{z_i} \varphi_1 \partial_{z_j} \hat{F}_3+k \hat{F}_3 \partial_{z_i} \hat{F}_1 \partial_{z_j} \varphi_1
	-l \hat{F}_1 \partial_{z_i} \hat{F}_3 \partial_{z_j} \varphi_1-k \hat{F}_3 \partial_{z_i} \varphi_1 \partial_{z_j} \hat{F}_1\big)\nonumber \\
	&& +\varphi_1^{-l-k}\big(\partial_{z_i} \hat{F}_3 \partial_{z_j} \hat{F}_1-\partial_{z_i} \hat{F}_1 \partial_{z_j} \hat{F}_3\big).\nonumber
	\eea
	
	\smallskip
	Let $-l-k>0$. Then from (\ref{rbb11}), we get $\mu_{\mathbf{W}}^0(z_0) \geq-l-k-1=-2 \mu_f^{\infty}(z_0)+\mu_g^0(z_0)-1$
	%\[\mu_W^0\left(z_0\right) \geq-l-k-1=-2 \mu_f^{\infty}\left(z_0\right)+\mu_g^0\left(z_0\right)-1\]
	and so by (\ref{rbb9}), we find
	\[\mu(z_0)=\mu_g^0(z_0)-2 \mu_f^{\infty}(z_0)-\mu_{\mathbf{W}}^0(z_0) \leq 1 \leq \mu_{f, 2}^{\infty}(z_0) \leq \nu(z_0),\]
	which shows that the inequality (\ref{rbb8}) holds.\par
	
	\smallskip
	Let $-l-k \leq 0$. Then (\ref{rbb11}) gives
	$\mu_{\mathbf{W}}^{\infty}(z_0) \leq l+k+1=2 \mu_f^{\infty}(z_0)-\mu_g^0(z_0)+1$
	and so from (\ref{rbb9}), we get
	\[\mu(z_0) \leq \mu_g^0(z_0)-2 \mu_f^{\infty}(z_0)+\mu_{\mathbf{W}}^{\infty}(z_0) \leq 1 \leq \mu_{f, 2}^{\infty}(z_0) \leq \nu(z_0),\]
	which shows that the inequality (\ref{rbb8}) holds.

	\smallskip
	{\bf Sub-case 2.1.3.2.} Let $\left|\nu_2\right|=2$. Now by Remark \ref{r3.1}, there exists some $j\in \mathbb{Z}[1, m]$ such that $\partial^{\nu_2}=\partial_{z_j} \partial_{z_i}$ and so from (\ref{rbb5}), we get
	\bea\label{rbb12} \mathbf{W}&=&\partial_{z_i} F_3 \partial_{z_j}(\partial_{z_i} F_1)-\partial_{z_i} F_1 \partial_{z_j}(\partial_{z_i} F_3)\\&=& 
	\varphi_1^{-l-k-3}\big(-k \hat{F}_3 \partial_{z_i} \varphi_1+\varphi_1 \partial_{z_i} \hat{F}_3\big)\times
	\big(l(l+1) \hat{F}_1 \partial_{z_i} \varphi_1 \partial_{z_j} \varphi_1-l \varphi_1 \partial_{z_i} \hat{F}_1 \partial_{z_j} \varphi_1\nonumber\big. \\ && \big.-l \varphi_1 \hat{F}_1 \partial_{z_i} \partial_{z_j} \varphi_1-l \varphi_1 \partial_{z_i} \varphi_1 \partial_{z_j} \hat{F}_1+\varphi_1^2 \partial_{z_i} \partial_{z_j} \hat{F}_1\big)\nonumber \\
	&& -\varphi_1^{-l-k-3}\big(-l \hat{F}_1 \partial_{z_i} \varphi_1+\varphi_1 \partial_{z_i} \hat{F}_1\big)\times
	\big(k(k+1) \hat{F}_3 \partial_{z_i} \varphi_1 \partial_{z_j} \varphi_1-k \varphi_1 \partial_{z_i} \hat{F}_3 \partial_{z_j} \varphi_1\nonumber\big. \\ &&\big.-k \varphi_1 \hat{F}_3 \partial_{z_i} \partial_{z_j} \varphi_1-k \varphi_1 \partial_{z_i} \varphi_1 \partial_{z_j} \hat{F}_3+\varphi_1^2 \partial_{z_i} \partial_{z_j} \hat{F}_3\big).\nonumber
	\eea
	
	Also by Lemma \ref{l1}, we get
	\bea\label{rbb13}
	\mathbf{W}_1=\left|\begin{array}{ccc}
		F_1 & F_2 & F_3 \\
		\partial_{z_i} F_1 & \partial_{z_i} F_2 & \partial_{z_i} F_3 \\
		\partial_{z_j} F_1 & \partial_{z_j} F_2 & \partial_{z_j} F_3
	\end{array}\right| \equiv 0.
	\eea
	
	\smallskip
	First we suppose $k=l$. 
	Then $\mu_g^0(z_0) = 0$ and so from (\ref{rbb12}), we get 
	\bea\label{rbb14} \mathbf{W}=\varphi_1^{-2l-2} l \partial_{z_i} \varphi_1\left\{l \partial_{z_i} \varphi_1\big(\hat{F}_3 \partial_{z_j} \hat{F}_1-\hat{F}_1 \partial_{z_j} \hat{F}_3\big)-\partial_{z_j} \varphi_1\big(\hat{F}_3 \partial_{z_i} \hat{F}_1-\hat{F}_1 \partial_{z_i} \hat{F}_3\big)\right\}+\cdots\eea
	
	If $l\geq 2$, then from (\ref{rbb14}), we get
	\beas\mu(z_0)\leq-2 \mu_f^{\infty}(z_0)+\mu_{\mathbf{W}}^{\infty}(z_0)\leq-2 \mu_f^{\infty}(z_0)+2l+2=2=\mu_{f, 2}^{\infty}(z_0) \leq \nu(z_0).\eeas
	
	Consequently the inequality (\ref{rbb8}) holds. Next let $k=l=1$. It is clear from (\ref{rbb10}) that $\hat{F}_1+\hat{F}_2+\hat{F}_3=u_1$ on $U$. Now from (\ref{rbb13}), we see that 
	\beas
	\mathbf{W}_1
	%u_1^{-3}\left|\begin{array}{ccc}
		%\hat{F}_1 & \hat{F}_2 & \hat{F}_3 \\
		%\partial_{z_i} \hat{F}_1 & \partial_{z_i} \hat{F}_2 & \partial_{z_i} \hat{F}_3 \\
		%\partial_{z_j} \hat{F}_1 & \partial_{z_j} \hat{F}_2 & \partial_{z_j} \hat{F}_3
		%\end{array}\right|
		=\varphi_1^{-3}\left|\begin{array}{ccc}
			\hat{F}_1 & \varphi_1 & \hat{F}_3 \\
			\partial_{z_i} \hat{F}_1 & \partial_{z_i} \varphi_1 & \partial_{z_i} \hat{F}_3 \\
			\partial_{z_j} \hat{F}_1 & \partial_{z_j} \varphi_1 & \partial_{z_j} \hat{F}_3
		\end{array}\right| \equiv 0
		\eeas
		and so
		\beas
		\partial_{z_i} \varphi_1\big(\hat{F}_3 \partial_{z_j} \hat{F}_1-\hat{F}_1 \partial_{z_j} \hat{F}_3\big)-\partial_{z_j} \varphi_1\big(\hat{F}_3 \partial_{z_i} \hat{F}_1-\hat{F}_1 \partial_{z_i} \hat{F}_3\big)
		=\varphi_1\big(\partial_{z_i} \hat{F}_3 \partial_{z_j} \hat{F}_1-\partial_{z_i} \hat{F}_1 \partial_{z_j} \hat{F}_3\big).
		\eeas
		
		Therefore from (\ref{rbb14}), we get $\mu_{\mathbf{W}}^{0}(z_0)=0$ and $\mu_{\mathbf{W}}^{\infty}(z_0) \leq 3$. 
		Consequently
		\[\mu(z_0) \leq-2 \mu_f^{\infty}(z_0)+\mu_{\mathbf{W}}^{\infty}(z_0)=1=\mu_{f, 2}^{\infty}(z_0) \leq \nu(z_0),\]
		which shows that the inequality (\ref{rbb8}) holds.\par

		\smallskip
		Next we suppose $k\neq l$. 
		Let $-l-k\geq 3$. Clearly $\mu_g^0(z_0) \geq 3 + 2\mu_f^\infty(z_0) \geq 5$. Then from (\ref{rbb12}), we get $\mu_{\mathbf{W}}^{\infty}(z_0)=0$ and $\mu_{\mathbf{W}}^0(z_0) \geq -l - k - 3 = -2\mu_f^\infty(z_0) + \mu_g^0(z_0) - 3$. Consequently 
		\[\mu(z_0) = \mu_g^0(z_0) - 2\mu_f^\infty(z_0) - \mu_{\mathbf{W}}^0(z_0) \leq 3 \leq \mu_{f,2}^\infty(z_0) + \mu_{g,2}^0(z_0) \leq v(z_0),\]
		which shows that the inequality (\ref{rbb8}) holds. Let $-l - k <3$. Then (\ref{rbb12}) give $\mu_{\mathbf{W}}^{0}\left(z_0\right)=0$ and
		\beas \mu(z_0) &\leq & \mu_g^0(z_0) - 2\mu_f^\infty(z_0) + \mu_{\mathbf{W}}^\infty(z_0) \\
		&\leq & \mu_g^0(z_0) - 2\mu_f^\infty(z_0) + l + k + 3=3\leq \mu_{f,2}^\infty(z_0) + \mu_{g,2}^0(z_0) \leq \nu(z_0),\eeas
		which shows that the inequality (\ref{rbb8}) holds.\par

		\smallskip
		{\bf Sub-case 2.1.4.} Let $\mu_f^{\infty}(z_0)>0$ and $\mu_g^{\infty}(z_0)>0$. In this case, we have 
		\bea\label{rbb15}\mu(z_0)=\mu_{F_2}^0(z_0)-\mu_{F_2}^{\infty}(z_0)-\mu_{F_3}^{\infty}(z_0)+\mu_{\mathbf{W}}^{\infty}(z_0)-\mu_{\mathbf{W}}^0(z_0).\eea
		
		\smallskip
		Let $\mu_f^{\infty}(z_0)=\mu_g^{\infty}(z_0)$. Now from (\ref{rbb2}), we have
		$\mu_{F_2}^0(z_0)=\mu_{F_2}^{\infty}(z_0)=0$. Then from (\ref{rbb4}), we get $\mu_{\mathbf{W}}^{0}(z_0)=0$ and
		$\mu_{\mathbf{W}}^{\infty}(z_0) \leq \mu_f^{\infty}(z_0)+2$.
		Since $\mu_{F_3}^{\infty}(z_0)=\mu_f^{\infty}(z_0)$, from (\ref{rbb15}), we have
		\[\mu(z_0) \leq-\mu_f^{\infty}(z_0)+\mu_{\mathbf{W}}^{\infty}(z_0) \leq 2 \leq \mu_{f, 2}^{\infty}(z_0)+\mu_{g, 2}^{\infty}(z_0) \leq \nu(z_0),\]
		which shows that the inequality (\ref{rbb8}) holds.\par
		
		\smallskip
		Let $\mu_f^{\infty}(z_0)<\mu_g^{\infty}(z_0)$. Then from (\ref{rbb4}) we have either
		\[\mu_{\mathbf{W}}^{\infty}(z_0) \leq 2 \mu_f^{\infty}(z_0)-\mu_g^{\infty}(z_0)+3\;\;\text {if}\; \mu_{\mathbf{W}}^0(z_0)=0\]
		or
		\[\mu_{\mathbf{W}}^0(z_0) \geq \mu_g^{\infty}(z_0)-2 \mu_f^{\infty}(z_0)-3\;\;\text {if}\;\;
		\mu_{\mathbf{W}}^{\infty}(z_0)=0.\]

		\smallskip
		Therefore in either case, we must have
		\beas
		\mu\left(z_0\right)=\mu_g^{\infty}(z_0)-2 \mu_f^{\infty}(z_0)+\mu_{\mathbf{W}}^{\infty}(z_0)-\mu_{\mathbf{W}}^0(z_0)\leq 3 \leq \mu_{f, 2}^{\infty}(z_0)+\mu_{g, 2}^{\infty}(z_0) \leq \nu(z_0),
		\eeas
		which shows that the inequality (\ref{rbb8}) holds.\par
		
		\smallskip
		Let $\mu_f^{\infty}(z_0)>\mu_g^{\infty}(z_0)$. Then from (\ref{rbb4}), we have
		$\mu_{\mathbf{W}}^{\infty}(z_0) \leq 2 \mu_f^{\infty}(z_0)-\mu_g^{\infty}(z_0)+3$
		and hence 
		\[\mu(z_0)=\mu_g^{\infty}(z_0)-2 \mu_f^{\infty}(z_0)+\mu_{\mathbf{W}}^{\infty}(z_0)\leq 3 \leq \mu_{f, 2}^{\infty}(z_0)+\mu_{g, 2}^{\infty}(z_0) \leq \nu(z_0),\]
		which shows that the inequality (\ref{rbb8}) holds.\par

		\smallskip
		Therefore all the foregoing discussion shows that the inequality (\ref{rbb8}) holds and so we get
		\bea\label{rbb16} &&\parallel\;N(r,F_1)+\sideset{}{_{k=1}^{3}}{\sum}\lbrace N(r,0;F_k)-N(r,F_k)\rbrace+ N(r,\mathbf{W})- N(r,0;\mathbf{W})\\&\leq&
		N_2(r,0;f)+N_2(r,0;g)+N_2(r,f)+N_2(r,g).\nonumber
		\eea
		
		On the other hand from (\ref{rbb2}), we get $\parallel T_1(r)\leq O(T(r))$. Hence (\ref{rbb3}) and (\ref{rbb16}) give 
		\[\parallel\;T(r, f)\leq N_2(r,f)+N_2(r,g)+ N_2(r,0;f)+N_2(r,0;g)+o(T(r))\]
		and so we obtain $(i)$.\par

		\smallskip
		{\bf Sub-case 2.2.} Let $F_1$, $F_2$ and $F_3$ be linearly dependent. Then there exists $(c_1,c_2,c_3)\in\mathbb{C}^3-\{0\}$ such that $c_1F_1+c_2F_2+c_3F_3=0$. Let $c_1-c_2=A$, $c_2=B$, $c_1-c_3=C$ and $c_3=D$. Then
		\bea\label{rbb17} g\equiv \frac{Af+B}{Cf+D}.\eea
		
		Clearly $AD-BC\neq 0$. Now applying Lemma \ref{l3} to (\ref{rbb17}), we obtain 
		\bea\label{rbb18} \parallel\;T(r,g)=T(r,f)+o(T(r,f)).\eea
		
		Next we consider following sub-cases.\par
		
		\smallskip
		{\bf Sub-case 2.2.1.} Let $AC\neq 0$. Then from (\ref{rbb17}), we get $g-\frac{A}{C}=\frac{B-AD/C}{Cf+D}$. Clearly 
		\[\parallel \ol N(r,A/C;g)=\ol N(r,f)\]
		and so using Lemma \ref{l2} and (\ref{rbb18}), we get  
		\beas \parallel\;T(r,f)=T(r,g)+o(T(r,f))&\leq& \ol N(r,g)+\ol N(r,A/C;g)+\ol N(r,0;g)+o(T(r,g))\nonumber\\
		&=& \ol N(r,f)+\ol N(r,g)+\ol N(r,0;g)+o(T(r,g)).\eeas
		
		Therefore we get $(i)$.\par

		\smallskip
		{\bf Sub-case 2.2.2.} Let $A\neq 0$ and $C=0$. Then from (\ref{rbb17}), we have $g\equiv (Af+B)/D$. 
		
		Let $B\neq 0$. Clearly $\parallel \ol N(r,B/D;g)=\ol N(r,0;f)$ and so using Lemma \ref{l2} and (\ref{rbb18}), we get 
		\beas \parallel\;T(r,f)=T(r,g)+o(T(r,g))&\leq& \ol N(r,g)+\ol N(r,B/D;g)+\ol N(r,0;g)+o(T(r,f))\nonumber\\
		&=& \ol N(r,g)+\ol N(r,0;f)+\ol N(r,0;g)+o(T(r,f))\eeas
		and so we obtain $(i)$. 
		
		Let $B=0$. Then $g\equiv Af/D.$ Let $1$ be not a Picard exceptional value of $f$. Since $f$ and $g$ share $1$ CM, there exists $\hat z\in\mathbb{C}^m$ such that $f(\hat z)=1$ and $g(\hat z)=1$. Since $g\equiv Af/D$, we get $A/D=1$ and so $f\equiv g$, which is $(ii)$. Next let $1$ be a Picard exceptional value of $f$.
		Then by Lemma \ref{l2}, we get 
		\[\parallel T(r,f)\leq \ol N(r,f)+\ol N(r,0;f)+o(T(r,f)),\]
		and so $(i)$ follows.\par

		\smallskip
		{\bf Sub-case 2.2.3.} Let $A=0$ and $C\neq 0$. Then $g\equiv B/(Cf+D)$. 
		
		Let $D\neq 0$. Clearly $\parallel \ol N(r,B/D;g)=\ol N(r,0;f)$ and so
		%\bea\label{rbb21} \ol{N}\Big(r,\frac{1}{g-B/D}\Big)=\ol N\Big(r,\frac{1}{f}\Big).\eea
		using Lemma \ref{l2} and (\ref{rbb18}), we get 
		\beas \parallel\;T(r,f)=T(r,g)+o(T(r,g))&\leq& \ol N(r,g)+\ol N(r,B/D;g)+\ol N(r,0;g)+o(T(r,f))\nonumber\\
		&=& \ol N(r,g)+\ol N(r,0;f)+\ol N(r,0;g)+o(T(r,f)).\eeas
		
		Thus we get $(i)$. 
		
		Let $D=0$. Then $g=B/Cf$. Let $1$ be not a Picard exceptional value of $f$. Then there exists $\hat z\in\mathbb{C}^m$ such that $f(\hat z)=1$ and $g(\hat z)=1$. Since $g=B/Cf$, we get $B/C=1$ and so $fg\equiv 1$, which is $(iii)$. Next let $1$ be a Picard exceptional value of $f$.
		Then by Lemma \ref{l2}, we get 
		\[\parallel T(r,f)\leq \ol N(r,f)+\ol N(r,0;f)+o(T(r,f)),\]
		and $(i)$ follows. 
		Hence the proof.
	\end{proof}

	We define the following divisor
	\beas \mu_{\partial_u f|f\neq 0}^0(z)=\begin{cases}
		\mu_{\partial_u f}^0(z), &\text{if $\mu_{\partial_u f}^0(z)>0$ and $\mu_{f}^0(z)=0$}\\
		0, &\text{or else}.
	\end{cases}
	\eeas
	
	The notation $N(r,0;\partial_u(g)\bigr| f\neq 0)$ is the counting function of the divisor $\mu_{\partial_u f|f\neq 0}^0(z)$.

	\begin{lem}\label{l7} Let $g:\mathbb{C}^m\to\mathbb{P}^1$ be a non-constant meromorphic function and let $G=g^n\partial_u(g)$, where $n$ is a positive integer. Then
		\begin{enumerate} 
			\item[(i)] $\parallel\;(n+1)T(r,g)\leq T(r,G)+N(r,0;g)-N(r,0;\partial_u(g))+o(T(r,g))$,
			
			\smallskip
			\item[(ii)] $\parallel\;N(r,0;\partial_u(g)\bigr| g\neq 0)\leq \ol N(r,0;g)+\ol N(r,g)+o(T(r,g))$,
			
			\smallskip
			\item[(iii)] $\parallel\;N(r,0;\partial_u(g))\leq N(r,0;g)+\ol N(r,g)+o(T(r,g))$.
		\end{enumerate}
	\end{lem}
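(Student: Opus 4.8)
The plan is to obtain all three estimates from one local analysis of the directional derivative $\partial_u(g)$ near the zeros and poles of $g$, together with the logarithmic‑derivative estimate (Lemma~\ref{l2}) and the First Main Theorem. First I would record the following multiplicity facts, which hold \emph{generically} (off an analytic set of dimension $\le m-2$) and are proved by the local‑coordinate argument used in the proof of Theorem~\ref{t2.2}: passing to a local coordinate system $(\varphi_1,\dots,\varphi_m)$ in which the relevant divisor of $g$ is $\{\varphi_1=0\}$, one has $\partial_u(\varphi_1)\not\equiv 0$ on each component of $\operatorname{supp}\mu_g^0$ and of $\operatorname{supp}\mu_g^\infty$ (here $\sum_j u_j\neq 0$ is used), whence $\partial_u(g)$ vanishes to order exactly $\mu_g^0-1$ at a zero of $g$ and has a pole of order exactly $\mu_g^\infty+1$ at a pole of $g$. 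Reading this off into the valence functions yields
\begin{align*}
N(r,0;\partial_u(g)) &= N(r,0;g)-\overline N(r,0;g)+N(r,0;\partial_u(g)\mid g\neq 0),\\
N\!\left(r,\partial_u(g)/g\right) &= \overline N(r,0;g)+\overline N(r,g), \qquad N\!\left(r,g/\partial_u(g)\right)=N(r,0;\partial_u(g)\mid g\neq 0),\\
N(r,G) &= (n+1)\,N(r,g)+\overline N(r,g).
\end{align*}

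For part~(ii) I would note that $\mu_{\partial_u(g)\mid g\neq 0}^0\le \mu_{\partial_u(g)/g}^0$ pointwise (generically), so that by the First Main Theorem
\[
N(r,0;\partial_u(g)\mid g\neq 0)\le N\!\left(r,0;\partial_u(g)/g\right)\le T\!\left(r,\partial_u(g)/g\right)+O(1)= m\!\left(r,\partial_u(g)/g\right)+N\!\left(r,\partial_u(g)/g\right)+O(1),
\]
and then Lemma~\ref{l2} gives $m(r,\partial_u(g)/g)=o(T(r,g))$ while the generic identity for $N(r,\partial_u(g)/g)$ supplies the desired bound $\overline N(r,0;g)+\overline N(r,g)+o(T(r,g))$. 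Part~(iii) then follows at once by substituting (ii) into the first generic identity, the two copies of $\overline N(r,0;g)$ cancelling.

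For part~(i) I would start from the factorisation $g^{\,n+1}=G\cdot\big(g/\partial_u(g)\big)$ and split $T=m+N$. On the counting side $(n+1)N(r,g)=N(r,G)-\overline N(r,g)$ by the last generic identity; on the proximity side $(n+1)m(r,g)=m(r,g^{\,n+1})\le m(r,G)+m\!\left(r,g/\partial_u(g)\right)$. To handle the last term I rewrite, via the First Main Theorem,
\[
m\!\left(r,g/\partial_u(g)\right)=m\!\left(r,\partial_u(g)/g\right)+N\!\left(r,\partial_u(g)/g\right)-N\!\left(r,g/\partial_u(g)\right)+O(1),
\]
estimate $m(r,\partial_u(g)/g)=o(T(r,g))$ by Lemma~\ref{l2}, and insert the generic $N$‑identities. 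Adding the two sides, the $\overline N(r,g)$ terms cancel and one is left with $(n+1)T(r,g)\le T(r,G)+\overline N(r,0;g)-N(r,0;\partial_u(g)\mid g\neq 0)+o(T(r,g))$, which is exactly the asserted bound in view of the first generic identity.

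The main obstacle is the several‑variables bookkeeping behind the generic multiplicity identities: in contrast with the one‑variable setting, one must justify that a local defining function of the zero (resp.\ pole) set of $g$ has non‑vanishing $u$‑derivative off a set of dimension $\le m-2$, so that $\partial_u(g)$ carries the expected order there, the degenerate locus contributing nothing to the valence functions. Once these identities are in place, (i)--(iii) are routine manipulations with $m$, $N$, the First Main Theorem, and Lemma~\ref{l2}.
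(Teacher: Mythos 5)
Your reduction of all three parts to a set of ``generic multiplicity identities'' contains a genuine gap: those identities are asserted as \emph{equalities}, and in several variables they fail as equalities precisely because the claimed exactness of the local orders is false. Writing $g=\varphi_1^{l}\hat g$ near a component of the zero divisor, one gets $\partial_u(g)=\varphi_1^{l-1}\bigl(l\hat g\,\partial_u(\varphi_1)+\varphi_1\partial_u(\hat g)\bigr)$, so the order of $\partial_u(g)$ is exactly $l-1$ only if $\partial_u(\varphi_1)$ does not vanish identically on that component. The hypothesis $\sum_j u_j\neq 0$ does not rule this out: it only says $u$ is not in the hyperplane $\{\sum_j z_j=0\}$, not that $u$ fails to be tangent to the divisor. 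For $g(z_1,z_2)=(z_1-z_2)e^{z_1}$ and $u=(1,1)$ one has $\partial_u(g)=g\not\equiv0$, and the degenerate locus is the whole component $\{z_1=z_2\}$, of dimension $m-1$; it cannot be discarded as a set of dimension $\le m-2$. In this example your identities $N(r,0;\partial_u(g))=N(r,0;g)-\ol N(r,0;g)+N(r,0;\partial_u(g)\mid g\neq0)$ and $N(r,\partial_u(g)/g)=\ol N(r,0;g)+\ol N(r,g)$ are both false (the left-hand sides equal $N(r,0;g)$ and $0$ respectively), and for $g=e^{z_1}/(z_1-z_2)$ the pole identity $N(r,G)=(n+1)N(r,g)+\ol N(r,g)$ fails as well. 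This matters for the direction of your estimates: in (i) you use $(n+1)N(r,g)=N(r,G)-\ol N(r,g)$ as an \emph{upper} bound for $(n+1)N(r,g)$, and in (iii) you use the first identity as an upper bound for $N(r,0;\partial_u(g))$; in the degenerate case the true relation is $\ge$ in both places, so these steps are invalid as written. (The final inequalities are still true --- the errors in your identities happen to cancel, as one can check via Lemma \ref{l7a} --- but your proof does not establish this.)

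Part (ii) is fine and is essentially the paper's argument: there one only needs the one-sided bound $\mu_{\partial_u(g)/g}^{\infty}\le\mu_{g,1}^{0}+\mu_{g,1}^{\infty}$, which survives the tangential degeneracy because the pole order of $\partial_u(g)/g$ along the divisor is \emph{at most} one. To repair (i) and (iii) you should, as the paper does, use only robust one-sided multiplicity inequalities: for (i), apply the First Main Theorem on the zero side, $(n+1)T(r,g)=m(r,0;g^{n+1})+N(r,0;g^{n+1})+O(1)\le m(r,\partial_u(g)/g)+m(r,0;G)+N(r,0;g^{n+1})+O(1)$, and then only the inequality $N(r,0;G)\ge nN(r,0;g)+N(r,0;\partial_u(g))$ is needed; for (iii), start from $m(r,0;g)\le m(r,0;\partial_u(g))+m(r,\partial_u(g)/g)$ and convert with the First Main Theorem together with $N(r,\partial_u(g))\le N(r,g)+\ol N(r,g)$.
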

	\begin{proof} $(i)$. Using the first main theorem and Lemma \ref{l2}, we get
		\beas \parallel\;(n+1) T(r,g)=T(r,g^{n+1})&=&m(r,0,g^{n+1})+N(r,0;g^{n+1})+O(1)\\
		&\leq & m\left(r,G/g^{n+1}\right)+ m(r,0;G)+N(r,0;f^{n+1})+O(1)\nonumber\\
		&=&m(r,0;G)+N(r,0,g^{n+1})+o(T(r,g))\nonumber\\
		&=& T(r,G)-N(r,0;G)+N(r,0;g^{n+1})+o(T(r,g))\nonumber\\
		&\leq & T(r,G)+N(r,0;g)-N(r,0;\partial_u(g))+o(T(r,g)).\nonumber
		\eeas

		\smallskip
		$(ii)$. Using the first main theorem and Lemma \ref{l2}, we get
		\beas \parallel\; N(r,0;\partial_u(g)\bigr|g\neq 0)\leq N\left(r,g/\partial_u(g)\right)\leq T\left(r,g/\partial_u(g)\right)&=&T\left(r,\partial_u(g)/g\right)+O(1)\\&\leq& N\left(r,\partial_u(g)/g\right)+o(T(r,g)),\eeas
		i.e.,
		\bea\label{rb1}\parallel\; N(r,0;\partial_u(g)\bigr| g\neq 0)\leq N(r,H)+o(T(r,g)),\eea
		where 
		\[H=\partial_u(g)/g.\] 
		
		We now prove $\mu^{\infty}_H\leq \mu_{g,1}^{0}+\mu_{g,1}^{\infty}$. Define $S=\bigcup_{a\in\{0,\infty\}}(\operatorname{supp}\mu_g^a)_s$,
		where $A_s$ denotes the set of singular points of the analytic set $A$. Take $z_0\in \mathbb{C}^m-S$.
		
		\smallskip
		First we assume that $l=\mu_g^{\infty}(z_0)>0$. Then from (\ref{rbc1a}), we obtain
		\[\parallel\; N(r,\partial_u(g))\leq N(r,g)+\ol N(r,g)\]
		and
		\[H=\frac{\partial_u(g)}{g}=-\frac{l}{\varphi_1}\partial_u(\varphi_1)+ \frac{1}{\hat g}\sideset{}{_{j=1}^{m}}{\sum} \frac{\partial \hat{g}}{\partial \varphi_j}\partial_u(\varphi_j).\]
		
		It is easy to verify that $\mu_H^{\infty}(z_0)\leq \mu_{g,1}^{\infty}(z_0)$ and so $\mu_H^{\infty}(z_0)\leq \mu_{g,1}^{0}(z_0)+\mu_{g,1}^{\infty}(z_0)$.

		\smallskip
		Next we assume that $l=\mu_g^{0}(z_0)>0$. Now proceeding in the same way as done above, we can write $g=\varphi_1^{l} \hat{g}\left(\varphi_1, \ldots, \varphi_m\right)$, where $\hat{g}$ is holomorphic function near $0$, which does not vanish along the set $\operatorname{supp} \mu_g^{0}$. Obviously
		\[\partial_u(g)=l \varphi_1^{l-1}\hat g \partial_u(\varphi_1) +\varphi_1^{l}\sideset{}{_{j=1}^m}{\sum}\frac{\partial \hat{g}}{\partial \varphi_j}\partial_u(\varphi_j)\]
		and so
		\beas H=\frac{\partial_u(g)}{g}=\frac{l}{\varphi_1}\partial_u(\varphi_1) +\frac{1}{\hat g}\sideset{}{_{j=1}^{m}}{\sum} \frac{\partial \hat{g}}{\partial \varphi_j}\partial_u(\varphi_j),\eeas
		which means that $\mu_H^{\infty}(z_0)\leq \mu_{g,1}^{0}(z_0)$ and so $\mu_H^{\infty}(z_0)\leq \mu_{g,1}^{0}(z_0)+\mu_{g,1}^{\infty}(z_0)$.
		Therefore 
		\[\parallel N\left(r,\partial_u(g)/g\right)\leq \ol N(r,0;g)+\ol N(r,g)\]
		and so from (\ref{rb1}), we have 
		\[\parallel\; N(r,0;\partial_u(g)\bigr| g\neq 0)\leq \ol N(r,0;g)+\ol N(r,g)+o(T(r,g)).\]

		\smallskip
		$(iii)$. Using the first main theorem and Lemma \ref{l2}, we get
		\beas \parallel\;m(r,0,g)\leq m(r,0,\partial_u(g))+m\left(r,\partial_u(g)/g\right)=m(r,0;\partial_u(g))+o(T(r,g)),\eeas
		i.e.,
		\beas \parallel\; N(r,0;\partial_u(g))&\leq& T(r,\partial_u(g))-T(r,g)+N(r,0,g)+o(T(r,g))\\&\leq&
		N(r,\partial_u(g))+m(r,g)-T(r,g)+N(r,0;g)+o(T(r,g))\\&\leq& \ol N(r,g)+N(r,0;g)+o(T(r,g)).
		\eeas
		
		Hence the proof.
	\end{proof}

	\subsection {{\bf Proof of Theorem \ref{t3.1}}} 
	\begin{proof} Let $F=f^n\partial_u(f)/a$ and $G=g^n\partial_u(g)/a$. Clearly $F$ and $G$ share $1$ CM. 
		Now by Lemma \ref{l6}, we consider following cases.\par
		
		\smallskip
		{\bf Case 1.} Let 
		\bea\label{t1.0}\parallel\; T(r,F)\leq N_2(r,F)+N_2(r,G)+N_2(r,0;F)+N_2(r,0;G)+o(T_{2}(r)),\eea
		where $T_{2}(r)=\max\{T(r,F),T(r,G)\}$. 
		
		Since $\parallel N(r,\partial_u(f))\leq N(r,f)+\ol N(r,f)$, by Lemma \ref{l2}, we get 
		$\parallel T(r,F)\leq (n+2)T(r,f)+o(T(r,f))$.
		Similarly we have $\parallel T(r,G)\leq (n+2)T(r,g)+o(T(r,g))$.
		
		Therefore if we take $T(r)=\max\{T(r,f),T(r,g)\}$, then $o(T_{2}(r))$ can be replaced by $o(T(r))$. Now in view of (\ref{t1.0}) and using Lemma \ref{l7}, we get 
		\bea\label{t1.1} &&\parallel\;(n+1)T(r,f)\\&\leq& N_2(r,F)+N_2(r,G)+N_2(r,0,F)+N_2(r,0,G)+N(r,0;f)-N(r,0;\partial_u(f))+o(T(r))\nonumber\\&\leq&
		2[\ol N(r,F)+\ol N(r,G)]+3N(r,0;f)+2 N(r,0;g)+N(r,0;\partial_u (g)\bigr| g\neq 0)+o(T(r))\nonumber\\&\leq&
		2\ol N(r,f)+3\ol N(r,g)+3N(r,0;f)+3 N(r,0;g)+o(T(r))\nonumber\\&\leq&
		11T(r)+o(T(r))\nonumber.\eea
		
		Similarly 
		\[\parallel (n+1)T(r,g)\leq 11T(r)+o(T(r))\]
		and so from (\ref{t1.1}), we get $\parallel (n+1)T(r)\leq 11T(r)+o(T(r))$, which is impossible.\par
		
		\smallskip
		{\bf Case 2.} Let $FG\equiv 1$. Then $f^n\partial_u(f)g^n\partial_u(g)\equiv a^2$ and so by Theorem \ref{t2.2}, we get $f=e^{\alpha}$ and $g=e^{\beta}$, where $\alpha$ and $\beta$ are non-constant entire functions in $\mathbb{C}^m$ such that $\partial_u(\alpha)\equiv -\partial_u(\beta)$, $\partial^2_u(\alpha)\equiv 0$ and $(\partial_u(\alpha))^2e^{(n+1)(\alpha+\beta)}\equiv -a^2$.\par

		\smallskip
		{\bf Case 3.} Let $F\equiv G$. Then $\partial_u (f^{n+1})\equiv \partial_u (g^{n+1})$. 
		Let $f=hg$. Then
		\bea\label{t1.3} \partial_u\left(g^{n+1}(h^{n+1}-1)\right)\equiv 0.\eea
		
		\smallskip
		First suppose $h^{n+1}\equiv 1$. Then $h$ is an $(n+1)$-th unit root and so $f=cg$, where $c^{n+1}=1$.
		
		\smallskip
		Next we suppose $h^{n+1}\not\equiv 1$. If $h$ is a constant, then from (\ref{t1.3}), we get $\partial_u(g)\equiv 0$, which is impossible. Hence $h$ is non-constant. Now from (\ref{t1.3}), we have
		\bea\label{t1.4} (n+1)g^n\partial_u(g)\equiv -\frac{\partial_u\big(h^{n+1}-1\big)}{h^{n+1}-1}=-(n+1)\frac{h^n\partial_u(h)}{h^{n+1}-1}.\eea
		
		Let $h_1=h^{n+1}-1=(h-\alpha_1)\ldots (h-\alpha_{n+1})$, where $\alpha_1,\ldots,\alpha_{n+1}$ are different $(n + 1)$-th roots of unity.
		Suppose $\mu_{h_1}^0(z_0)>0$ for some $z_0\in\mathbb{C}^m$. Clearly $\mu_{\partial_u(h)}^0(z_0)\leq \mu_{h_1}^0(z_0)$ holds generically. 
		If $\mu_{\partial_u(h)}^0(z_0)=0$, then from (\ref{t1.4}), we have generically $\mu_{h_1}^0(z_0)\geq 2$ over $\text{supp}\;\mu^0_{h_1}$ and so
		\[\parallel\;\ol N(r,\alpha_j,h)\leq (1/2) N(r,\alpha_j;h)\leq (1/2)T(r,h)+O(1),\]
		where $j=1,2,\ldots,n+1$. If $\mu_{\partial_u(h)}^0(z_0)>0$, then by Lemma \ref{l7}, we get
		\[\parallel\;\ol N(r,1;h^{n+1})\leq \ol N(r,0;\partial_u(h))\leq N(r,0;h)+\ol N(r,h)+o(T(r,h)).\]
		
		Therefore in either case, we have 
		\beas \parallel\;\sideset{}{_{j=1}^{n+1}}{\sum}\ol N(r,\alpha_j;h)&\leq& ((n+1)/2)T(r,h)+N(r,0;h)+\ol N(r,h)+o(T(r,h))\\&\leq& ((n+5)/2)T(r,h)+o(T(r,h))\nonumber\eeas
		and so by Lemma \ref{l3}, we get
		\[\parallel\;(n-1)T(r,h)\leq\sideset{}{_{j=1}^{n+1}}{\sum} N(r,\alpha_j;h)+o(T(r,h))\leq ((n+5)/2)T(r,h)+o(T(r,h)),\]
		which is impossible since $n\geq 11$.
		Hence the proof.
	\end{proof}

	\subsection {{\bf Proof of Theorem \ref{t3.2}}} Using Corollary \ref{c2.1} instead of Theorem \ref{t2.2}, we get Theorem \ref{t3.2} directly from the proof of Theorem \ref{t3.1}. So we omit the detail.

	\section{\bf {An application}}
As application of Corollary \ref{c3.1}, we present the derivations of entire solutions in $\mathbb{C}^m$ of the following nonlinear partial differential equations:
\bea\label{e4.1} y^n\partial_u(y)-1=e^{\alpha}\left(e^{\beta}-1\right),\eea
where $n\geq 7$ be an integer, $\alpha$ and $\beta$ are entire functions in $\mathbb{C}^m$.

If we take $e^{\beta}=g^n\partial_u(g)$ for some entire function $g$ in $\mathbb{C}^m$, then $g$ has no zeros. Therefore we may assume that $g=e^{\gamma}$, where $\gamma$ is an entire function in $\mathbb{C}^m$. Since $e^{\beta}=g^n\partial_u(g)$, we have
\bea\label{e4.2}e^{\beta-(n+1)\gamma}=\partial_u(\gamma),\eea
which shows that $\partial_u(\gamma)$ has no zeros and so there exists an entire function $\delta$ in $\mathbb{C}^m$ such that 
$\partial_u(\gamma)=e^{\delta}$. Now from (\ref{e4.2}), we get that $\beta-(n+1)\gamma-\delta$ is a constant and so
\bea\label{e4.3} \partial_u(\beta)=(n+1)e^{\delta}+\partial_u(\delta)=(n+1)\partial_u(\gamma)+\frac{\partial^2_u(\gamma)}{\partial_u(\gamma)},\eea
for all $u\in S_m$. Now by Corollary \ref{c3.1}, we see that one of the following cases holds:
\begin{enumerate}
	\item[(i)] $y\equiv dg$ for some $(n+1)$-th root of unity  $d$;
	\item[(ii)] $y=e^{\tilde \alpha}$ and $g=e^{\gamma}$, where $c$ is a non-zero constant in $\mathbb{C}$, $\tilde \alpha$ and $\gamma$ are non-constant entire functions in $\mathbb{C}^m$ such that $\partial_u(\tilde \alpha)=-c$, $\partial_u(\gamma)=c$ and 
	$c^2e^{(n+1)(\tilde \alpha+\gamma)}=-1$. 
\end{enumerate}

\smallskip
For $(ii)$, we have $\partial_u(\gamma)=c$. Since $\partial_u(\gamma)=e^{\delta}$, it follows that $\delta\in\mathbb{C}$. Clearly $e^{\delta}=c$ and so $\partial_u(\delta)=0$. Therefore from (\ref{e4.3}), we get $\partial_u(\beta)=(n+1)c$ for all $u\in S_m$.
Consequently $y=e^{\tilde \alpha}$, where $\tilde \alpha$ is a non-constant entire function in $\mathbb{C}^m$ such that $\partial_u(\tilde \alpha)=-c\in\mathbb{C}\backslash \{0\}$ and $(n+1)\partial_u(\tilde \alpha)=-\partial_u(\beta)$.

If $\beta$ is a polynomial, then from the proof of Corollary \ref{c2.2}, we say that $\deg(\beta)=1$ and so by (\ref{e4.3}), we see that $\delta$ is a constant. Since $\partial_u(\gamma)=e^{\delta}$ for all $u\in S_m$, it follows that $\partial_u(\gamma)$ for all $u\in S_m$ and so from the proof of Corollary \ref{c2.2}, we say that $\deg(\gamma)=1$.

\smallskip
Finally we get the following results.
\begin{theo}\label{t4.1} Let $n\geq 7$ be an integer and let $\alpha$ and $\beta$ be entire functions in $\mathbb{C}^m$. Then every solution of (\ref{e4.1}) is of the form $y=ce^{\gamma}$,
	where $\gamma$ is an entire function in $\mathbb{C}^m$ such that $\partial_u(\gamma)$ has no zeros and 
	\[\partial_u(\beta)\partial_u(\gamma)=(n+1)(\partial_u(\gamma))^2+\partial^2_u(\gamma)\]
	for all $u\in S_m$ or $y=c_1e^{\tilde \alpha}$,
	where $\tilde \alpha$ is a non-constant entire function in $\mathbb{C}^m$ such that $\partial_u(\tilde \alpha)=-c$, a non-zero constant in $\mathbb{C}$ and $(n+1)\partial_u(\tilde \alpha)=-\partial_u(\beta)$ for all $u\in S_m$.
\end{theo}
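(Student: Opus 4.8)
The plan is to recast the partial differential equation \eqref{e4.1} as a value-sharing problem for nonlinear differential polynomials and then invoke Corollary \ref{c3.1}. The first move is to rewrite the right-hand side of \eqref{e4.1} in the shape $e^{\alpha}\bigl(g^{n}\partial_u(g)-1\bigr)$ by introducing an auxiliary entire function $g$ with $e^{\beta}=g^{n}\partial_u(g)$. Since $e^{\beta}$ is zero-free such a $g$ must be zero-free, so $g=e^{\gamma}$ for an entire $\gamma$, and the condition $e^{\beta}=g^{n}\partial_u(g)=e^{(n+1)\gamma}\partial_u(\gamma)$ is precisely \eqref{e4.2}; equivalently, putting $w=e^{(n+1)\gamma}$, one needs a zero-free entire $u$-antiderivative $w$ of $(n+1)e^{\beta}$. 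From \eqref{e4.2} the directional derivative $\partial_u(\gamma)$ is zero-free, so I would write $\partial_u(\gamma)=e^{\delta}$ with $\delta$ entire; comparing with \eqref{e4.2} shows $\beta-(n+1)\gamma-\delta$ is constant, and differentiating along $u$ yields \eqref{e4.3}, i.e. $\partial_u(\beta)=(n+1)\partial_u(\gamma)+\partial_u^{2}(\gamma)/\partial_u(\gamma)$, which after clearing denominators is the quadratic relation appearing in the statement.

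With this $g$ fixed, \eqref{e4.1} becomes $y^{n}\partial_u(y)-1=e^{\alpha}\bigl(g^{n}\partial_u(g)-1\bigr)$, and since $e^{\alpha}$ is a zero-free entire function the zero divisors of $y^{n}\partial_u(y)-1$ and of $g^{n}\partial_u(g)-1$ coincide, i.e. $y^{n}\partial_u(y)$ and $g^{n}\partial_u(g)$ share the value $1$ CM. Both $y$ and $g$ are entire, $\partial_u(g)=g\,\partial_u(\gamma)\not\equiv 0$, and one may assume $\partial_u(y)\not\equiv 0$ (otherwise the left side of \eqref{e4.1} reduces to $-1$ and forces $e^{\alpha}(e^{\beta}-1)\equiv -1$, a degenerate case outside the scope of the statement). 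Hence Corollary \ref{c3.1} applies with $a=1$ and $n\ge 7$, leaving the two alternatives $y\equiv dg$ for an $(n+1)$-th root of unity $d$, or $y=e^{\tilde\alpha}$, $g=e^{\gamma}$ with $\partial_u(\tilde\alpha)=-c$, $\partial_u(\gamma)=c\in\mathbb{C}\backslash\{0\}$ and $c^{2}e^{(n+1)(\tilde\alpha+\gamma)}=-1$.

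It then remains to translate each alternative. In the first, $y=c\,e^{\gamma}$ with $c=d$, and $\gamma$ is the function constructed above, whose $u$-derivative is zero-free and which satisfies the quadratic relation; this is the first conclusion of the theorem. In the second, since $\partial_u(\gamma)=c$ is a nonzero constant, the $\delta$ above is constant, so $\partial_u(\delta)=0$ and \eqref{e4.3} collapses to $\partial_u(\beta)=(n+1)c=-(n+1)\partial_u(\tilde\alpha)$; absorbing a multiplicative constant into $y=c_{1}e^{\tilde\alpha}$ gives the second conclusion. Running the argument for each $u\in S_m$ produces the stated relations ``for all $u\in S_m$''. The step I expect to be the main obstacle is the reduction at the start: one has to produce the auxiliary $g$, i.e. solve \eqref{e4.2} (equivalently, exhibit a zero-free entire $u$-antiderivative of $(n+1)e^{\beta}$), and simultaneously verify that the hypotheses of Corollary \ref{c3.1} --- entireness and non-vanishing of the directional derivatives --- are met; once the problem is put in that form, everything else is a matter of reading off and rephrasing the two cases the corollary supplies.
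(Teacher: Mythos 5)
Your proposal follows essentially the same route as the paper: introduce $g$ with $e^{\beta}=g^{n}\partial_u(g)$, deduce $g=e^{\gamma}$ with $\partial_u(\gamma)=e^{\delta}$ zero-free, obtain the relation $\partial_u(\beta)=(n+1)\partial_u(\gamma)+\partial_u^{2}(\gamma)/\partial_u(\gamma)$, observe that $y^{n}\partial_u(y)$ and $g^{n}\partial_u(g)$ share $1$ CM because $e^{\alpha}$ is zero-free, and read off the two conclusions from Corollary \ref{c3.1}. The existence of the auxiliary $g$ (a zero-free entire $u$-antiderivative of $(n+1)e^{\beta}$), which you rightly flag as the delicate point, is likewise assumed rather than established in the paper's own argument, so your proposal is faithful to it.
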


\begin{theo} \label{t4.2} Let $n\geq 7$ be an integer and $\alpha$ be an entire function in $\mathbb{C}^m$ and let $\beta$ be a polynomial in $\mathbb{C}^m$. Then $\beta$ reduces to a polynomial of degree one and every solution of (\ref{e4.1}) is of the form $y=ce^{\gamma}$,
	where $\gamma$ is a polynomial in $\mathbb{C}^m$ such that $\deg(\gamma)=1$ and $(n+1)\partial_u(\gamma)=\partial_u(\beta)$ for all $u\in S_m$ or $y=c_1e^{\tilde \alpha}$,
	where $\tilde \alpha$ is a polynomial in $\mathbb{C}^m$ of degree one such that $(n+1)\partial_u(\tilde \alpha)=-\partial_u(\beta)$ for all $u\in S_m$.
\end{theo}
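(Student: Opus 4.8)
By Theorem \ref{t4.1}, every solution $y$ of \eqref{e4.1} is either of the form $y = ce^{\gamma}$ with $\partial_u(\gamma)$ zero-free and
\[
\partial_u(\beta)\,\partial_u(\gamma) = (n+1)\big(\partial_u(\gamma)\big)^2 + \partial_u^2(\gamma),
\]
or of the form $y = c_1 e^{\tilde\alpha}$ with $\partial_u(\tilde\alpha) = -c$ a nonzero constant and $(n+1)\partial_u(\tilde\alpha) = -\partial_u(\beta)$, these relations being valid for every $u \in S_m$. The plan is to add the hypothesis that $\beta$ is a polynomial and push it through each case; the one structural input needed is the fact isolated in the proof of Corollary \ref{c2.2}, namely that an entire function on $\mathbb{C}^m$ all of whose second-order coordinate derivatives $\partial_j^2$ vanish identically is an affine polynomial.

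In the case $y = c_1 e^{\tilde\alpha}$ there is nothing to iterate: $\partial_u(\beta) = -(n+1)\partial_u(\tilde\alpha) = (n+1)c$ is constant, so reading this relation along the coordinate directions $u = \imath_j \in S_m$ forces each $\partial_j(\beta)$ to be constant, hence $\beta$ is affine, and $\deg(\beta) = 1$ because $(n+1)c \neq 0$; the same reasoning applied to $\tilde\alpha$ makes it a degree-one polynomial, and the identity $(n+1)\partial_u(\tilde\alpha) = -\partial_u(\beta)$ persists. In the case $y = ce^{\gamma}$, write the zero-free function $\partial_u(\gamma)$ as $e^{\delta}$ for an entire $\delta$; since $\partial_u^2(\gamma) = e^{\delta}\partial_u(\delta)$, dividing the displayed relation of Theorem \ref{t4.1} by $\partial_u(\gamma) \neq 0$ yields
\[
\partial_u(\beta) = (n+1)e^{\delta} + \partial_u(\delta).
\]
The crux is to conclude from this, together with $\beta$ being a polynomial, that $\delta$ is constant. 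I would argue by contradiction: if $\delta$ were non-constant, then $e^{\delta}$ is transcendental, so both $O(\log r)$ and $T(r,\delta)$ are $o\big(T(r,e^{\delta})\big)$; but from $e^{\delta} = \big(\partial_u(\beta) - \partial_u(\delta)\big)/(n+1)$, the first main theorem together with the logarithmic-derivative estimate (Lemma \ref{l2}) gives
\[
T(r,e^{\delta}) \le T\big(r,\partial_u(\delta)\big) + O(\log r) \le T(r,\delta) + o\big(T(r,\delta)\big) + O(\log r) = o\big(T(r,e^{\delta})\big),
\]
which is absurd. Hence $\delta$ is constant, so $\partial_u(\gamma) = e^{\delta}$ is a nonzero constant for every $u$, and — reading this along the coordinate directions as before — $\partial_j^2(\gamma) \equiv 0$ for all $j$; by the Corollary \ref{c2.2} fact, $\gamma$ is a degree-one polynomial, the Theorem \ref{t4.1} relation collapses to $\partial_u(\beta) = (n+1)\partial_u(\gamma)$, and $\beta = (n+1)\gamma + \mathrm{const}$ is itself a degree-one polynomial.

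The single genuine obstacle is the rigorous form of the penultimate step — ruling out non-constant $\delta$ — which requires the comparison $T(r,\delta) = o\big(T(r,e^{\delta})\big)$ for non-constant entire $\delta$ on $\mathbb{C}^m$, i.e. a Borel--Carath\'eodory-type lower bound tying $T(r,e^{\delta})$ to $\max_{\|z\|=r}\mathrm{Re}\,\delta(z)$; granting that, the rest is exactly the bookkeeping already performed ahead of Theorem \ref{t4.1} together with the affine-polynomial lemma extracted in the proof of Corollary \ref{c2.2}.
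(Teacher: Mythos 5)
Your proposal follows essentially the same route as the paper: reduce to the dichotomy established before Theorem \ref{t4.1}, show that $\delta$ (with $\partial_u(\gamma)=e^{\delta}$) is constant, and then invoke the affine-polynomial argument from the proof of Corollary \ref{c2.2} to conclude that $\gamma$, $\tilde\alpha$ and $\beta$ are degree-one polynomials. The only real difference is one of explicitness and order: the paper disposes of the key step in a single clause (``$\deg(\beta)=1$ and so by (\ref{e4.3}) $\delta$ is a constant''), deducing $\deg(\beta)=1$ \emph{before} the constancy of $\delta$, whereas you first prove $\delta$ constant and then read off $\deg(\beta)=1$ --- which is the logically cleaner order, since outside case (ii) the paper gives no mechanism for obtaining $\deg(\beta)=1$ directly. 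Your growth argument for the constancy of $\delta$ is sound modulo the comparison $T(r,\delta)=o\bigl(T(r,e^{\delta})\bigr)$ for non-constant entire $\delta$ on $\mathbb{C}^m$, which you correctly isolate as the one ingredient needed; this is the several-variables analogue of a classical one-variable fact (obtainable, e.g., by restriction to generic complex lines or from a Borel--Carath\'eodory bound), and the paper's own text supplies no substitute for it, so your version is if anything more honest about where the weight of the argument lies.
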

\begin{exm}\label{ex4.1}
	Let $\dim(\mathbb{C}^m)=1$ and $n\geq7$. Consider the nonlinear differential equation
	\[	y^n y' - 1 = e^{\alpha(z)}(e^{\beta(z)} - 1),\]
	where $\alpha(z)$ and $\beta(z)$ are entire functions. Suppose $\alpha \equiv 0$ and $\beta(z)=az+b$ with constants $a,b\in\mathbb{C}$, $a\neq0$. Then by Theorem \ref{t4.2}, every solution satisfies $y(z)=ce^{\gamma(z)}$, $(n+1)\gamma'(z)=\beta'(z)=a,$
	so that $\gamma(z)=\frac{a z}{n+1}+c_1$, where $c_1$ is a constant. Hence $y(z)=Ce^{\frac{a z}{n+1}},$	where $C=ce^{c_1}$. Thus, the equation admits an exponential-type entire solution whose growth rate is determined by the slope of $\beta$.
\end{exm}

\begin{exm}\label{ex4.2}
	Let $\dim(\mathbb{C}^m)=2$ and $u=(1,1)$ so that $\partial_u=\frac{\partial}{\partial z_1}+\frac{\partial}{\partial z_2}$. 
	Consider $\alpha\equiv0$ and $\beta(z_1,z_2)=a_1z_1+a_2z_2+b$, where $a_1,a_2,b\in\mathbb{C}$. 
	Then by Theorem \ref{t4.2}, any entire solution of $y^n \partial_u(y) - 1 = e^{\beta(z_1,z_2)} - 1$
	is of the form $y(z_1,z_2)=c_1 e^{\gamma(z_1,z_2)}$, $(n+1)\partial_u(\gamma)=\partial_u(\beta)$.
	Since $\partial_u(\beta)=a_1+a_2$, we obtain $\partial_u(\gamma)=\frac{a_1+a_2}{n+1}$. Integration gives
	$\gamma(z_1,z_2)=\frac{a_1z_1+a_2z_2}{n+1}+c_2$ and hence
	\[y(z_1,z_2)=C e^{\frac{a_1z_1+a_2z_2}{n+1}},\]
	where $C=c_1e^{c_2}$. This shows that the solution is an entire exponential surface depending linearly on both variables.
\end{exm}

\begin{exm}\label{ex4.3}
	Let $\dim(\mathbb{C}^m)\ge1$, $u=(1,1,\ldots,1)$, and assume $\alpha \equiv 0$, $\beta(z)=\sum_{j=1}^m a_j z_j + b$ with constants $a_j,b\in\mathbb{C}$. Then, from Theorem \ref{t4.2}, each solution of $y^n\partial_u(y)-1=e^{\alpha(z)}(e^{\beta(z)}-1)$
	satisfies
	\[	(n+1)\partial_u(\gamma)=\partial_u(\beta)=\sum_{j=1}^m a_j.\]
	Consequently, $\gamma(z)=\frac{\sum_{j=1}^m a_jz_j}{n+1}+c_3,$
	so that
	\[y(z)=C\exp\!\left(\frac{\sum_{j=1}^m a_jz_j}{n+1}\right),\]
	where $C=ce^{c_3}$. Thus, the solution represents an entire exponential hyper surface in $\mathbb{C}^m$, determined by the direction vector $(a_1,\ldots,a_m)$.
\end{exm}

\begin{exm}\label{ex4.4}
	Let $\dim(\mathbb{C}^m)=2$, $u = (1, 1)$, $\alpha\equiv 2\pi i$ and $\beta(z_1, z_2) = a(z_1+z_2) + b(z_1-z_2) + c$, where $a, b, c \in \mathbb{C}$ and $b \neq 0$. Then
	$\partial_u(\beta) = \frac{\partial \beta}{\partial z_1} + \frac{\partial \beta}{\partial z_2} = (a+b) + (a-b) = 2a.$
	The equation
	\[	y^n \left( \frac{\partial y}{\partial z_1} + \frac{\partial y}{\partial z_2} \right) - 1 = e^{\alpha(z_1, z_2)} \left( e^{\beta(z_1, z_2)} - 1 \right)	\]
	admits the entire solution
	\[	y(z_1, z_2) = C e^{\frac{a+b}{n+1}z_1 + \frac{a-b}{n+1}z_2},\]
	where $C \in \mathbb{C}\backslash \{0\}$.
\end{exm}
\begin{exm}\label{ex4.5}
	Let $\dim(\mathbb{C}^m)=3$, $u=(1,0,1)$, $\alpha\equiv 2\pi i$ and $\beta(z_1,z_2,z_3) = a_1 z_1 + a_2 z_2 + a_3 z_3 + b\,z_1 z_3 + c$, with $a_j, b, c \in \mathbb{C}$ and $b$ small. Then
	$	\partial_u \beta = \frac{\partial}{\partial z_1} \beta + \frac{\partial}{\partial z_3} \beta = a_1 + b z_3 + a_3 + b z_1 = (a_1 + a_3) + b(z_1 + z_3).$
	For $b=0$, the equation
	\[	y^n \left( \frac{\partial y}{\partial z_1} + \frac{\partial y}{\partial z_3} \right) - 1 = e^{\alpha(z_1, z_2, z_3)} \left( e^{\beta(z_1, z_2, z_3)} - 1 \right)\]
	has the solution
	\[y(z_1,z_2,z_3) = C \exp\left( \frac{a_1z_1+a_2z_2 + a_3z_3}{n+1}\right),\]
	where $C \in \mathbb{C}\backslash \{0\}$. For small $b \neq 0$, the solution can be adjusted by perturbation theory.
\end{exm}
\begin{exm}\label{ex4.6}
	Let $\dim(\mathbb{C}^m)=2$, $u=(1,1)$, $\alpha\equiv 2\pi i$ and $\beta(z_1, z_2) = a(z_1 + \lambda z_2) + b\cos(z_1 - z_2) + c$, with $a, b, c, \lambda \in \mathbb{C}$, $|\lambda| \neq 1$, $b \neq 0$. Then
	$\partial_u \beta = a(1 + \lambda).$
	The equation
	\[y^n \left( \frac{\partial y}{\partial z_1} + \frac{\partial y}{\partial z_2} \right) - 1 = e^{\alpha(z_1, z_2)} \left( e^{\beta(z_1, z_2)} - 1 \right)	\]
	admits the explicit entire solution
	\[y(z_1, z_2) = C \exp \left( \frac{a}{n+1}(z_1 + \lambda z_2) + \frac{b}{n+1}\cos (z_1 - z_2) \right ),\]
	where $C \in \mathbb{C}\backslash \{0\}$.
\end{exm}
\begin{exm}\label{ex4.7}
	Let $\dim(\mathbb{C}^m)=2$, $u=(1,0)$ so that $\partial_u=\dfrac{\partial}{\partial z_1}$. 
	Take
	$\beta(z_1,z_2)=a z_1^2 + b z_2 + c, \qquad \alpha\equiv0,$
	where $a,b,c\in\mathbb{C}$ and $a\neq0$. Then, $\partial_u(\beta)=\frac{\partial \beta}{\partial z_1}=2a z_1,$
	which is non-constant and depends on $z_1$. Consider the nonlinear partial differential equation
	\[y^n \frac{\partial y}{\partial z_1} - 1 = e^{\beta(z_1,z_2)} - 1.	\]
 To satisfy the compatibility relation $(n+1)\partial_u(\gamma)=\partial_u(\beta),$
	we would need $\partial_u(\gamma)=\dfrac{2a z_1}{n+1}$, which clearly vanishes at $z_1=0$ and hence $\partial_u(\gamma)$ has zeros. Consequently, $\partial_u(\gamma)$ cannot be entire and zero-free. Moreover, $\partial_u^2(\gamma)=\dfrac{2a}{n+1}$ does not satisfy the nonlinear condition
	\[\partial_u(\beta)\partial_u(\gamma)=(n+1)(\partial_u(\gamma))^2+\partial_u^2(\gamma)\]
	for all $z_1\in\mathbb{C}$. Hence, in this case there exists no entire solution of exponential type. This example demonstrates that any quadratic dependence of $\beta$ on the direction variable $u$ destroys the exponential-type entire solution structure.
\end{exm}
\begin{exm}\label{ex4.8}
	Let $\dim(\mathbb{C}^m)=2$, $u=(1,1)$ and take a mixed linear?periodic function
	$\beta(z_1,z_2)=a(z_1+z_2)+d\sin(z_1+2z_2)+c,$
	where $a,d,c\in\mathbb{C}$. Then
	$	\partial_u(\beta)=\frac{\partial \beta}{\partial z_1}+\frac{\partial \beta}{\partial z_2}
	=2a + d\cos(z_1+2z_2)(1+2)	=2a + 3d\cos(z_1+2z_2),$
	which is a non-constant oscillatory function. Substituting this into the compatibility condition
	\[\partial_u(\beta)\partial_u(\gamma)=(n+1)(\partial_u(\gamma))^2+\partial_u^2(\gamma),\]
	we obtain a differential equation involving $\gamma$. 	
	If we let $\gamma(z_1,z_2)=\Gamma(s)$ where $s=z_1+2z_2$, then
	$	\partial_u(\gamma)=\frac{\partial \gamma}{\partial z_1}+\frac{\partial \gamma}{\partial z_2}
	=3\Gamma'(s).$
	The above compatibility condition then becomes the Riccati-type equation as follows
%	\[	(2a+3d\cos s)\cdot 3\Gamma'(s)	=(n+1)(3\Gamma'(s))^2+3\Gamma''(s).\]
	%Dividing through by $3$ and setting $G(s)=\Gamma'(s)$, we obtain the Riccati-type equation
	\[ (2a+3d\cos s)G(s)=(n+1)\,3G(s)^2+G'(s).\]
	In general, this equation does not admit an entire zero-free solution $G(s)$ unless $d=0$, in which case $\beta$ becomes linear and Theorem~4.2 applies. Therefore, for $d\neq0$, there is no entire exponential-type solution of (\ref{e4.1}). This example further confirms that the linearity of $\beta$ in the direction of $u$ is essential for the existence of entire exponential solutions.
\end{exm}
\begin{rem}\label{r4.2}
	Examples \ref{ex4.7} and \ref{ex4.8} emphasize the necessity of the linearity condition on $\beta$. 
	When $\beta$ involves nonlinear or oscillatory terms (such as quadratic or trigonometric components) along the direction of differentiation $\partial_u$, 
	the functional relation 
	\[\partial_u(\beta)\partial_u(\gamma)=(n+1)(\partial_u(\gamma))^2+\partial_u^2(\gamma)	\]
	fails to possess zero-free entire solutions $\gamma$. 
	Hence, entire exponential-type solutions arise only when $\beta$ is a polynomial of degree one in the variables along $u$, 
	fully confirming the restriction imposed in Theorem \ref{t4.2}.
\end{rem}
\begin{table}[H]
	\centering
	\caption{\textbf{Examples and Physical Interpretation}}
	\renewcommand{\arraystretch}{1.4}  % vertical spacing between rows
	\setlength{\arrayrulewidth}{0.8pt} % thickness of grid lines
	
	\begin{tabular}{|p{2.8cm}|p{5.3cm}|p{7cm}|}
		\hline
		\textbf{Example} & \textbf{Form of $\beta$ or Setup} & \textbf{Physical / Mathematical Interpretation} \\
		\hline
		
		Example \ref{ex4.1} & $\beta = a z + b$ &
		Represents a one-dimensional steady-state heat conduction or exponential decay along a single direction. \\
		\hline
		
		Example \ref{ex4.2} & $\beta = a_1 z_1 + a_2 z_2 + b$ &
		Describes plane-wave type solutions in two-dimensional space with uniform propagation speed. \\
		\hline
		
		Example \ref{ex4.3} & $\beta = \sum_{j=1}^{m} a_j z_j + b$ &
		Corresponds to multi-dimensional harmonic oscillations or diffusion phenomena with constant coefficients. \\
		\hline
		
		Example \ref{ex4.4} & $\beta = 2a(z_1 + z_2) + c$ &
		Illustrates coupled reaction-diffusion systems where symmetry occurs in $z_1$ and $z_2$. \\
		\hline
		
		Example \ref{ex4.5} & $\beta = a_1 z_1 + a_2 z_2 + a_3 z_3 + b z_1 z_3 + c$ (for small $b$) &
		Models a weakly nonlinear perturbation linking $z_1$ and $z_3$, appearing in near-resonant wave systems. \\
		\hline
		
		Example~\ref{ex4.6} & $\beta = a(\lambda z_1 + \lambda z_2) + b\cos(z_1 - z_2) + c$ &
		Represents interference of periodic structures or standing-wave type solutions in coupled oscillators. \\
		\hline
		
		Example~\ref{ex4.7} & $\beta = a z_1^2 + b z_2 + c$ &
		Indicates parabolic potential fields similar to diffusion under a quadratic constraint (e.g., Schr\"odinger-type systems). \\
		\hline
		
		Example~\ref{ex4.8} & $\beta = a(z_1 + z_2) + d\sin(z_1 + 2z_2) + c$ &
		Represents a modulated nonlinear interaction between two spatial variables producing oscillatory transport. \\
		\hline
	\end{tabular}
\end{table}

\medskip
{\bf Statements and declarations:}

\smallskip
\noindent \textbf {Conflict of interest:} The authors declare that there are no conflicts of interest regarding the publication of this paper.

\smallskip
\noindent{\bf Funding:} There is no funding received from any organizations for this research work.

\smallskip
\noindent \textbf {Data availability statement:}  Data sharing is not applicable to this article as no database were generated or analyzed during the current study.

\end{document}